\documentclass{amsart}

\usepackage{amsfonts}
\usepackage{amssymb}
\usepackage{tikz}
\usepackage{caption}
\usepackage{cancel}
\usepackage{bm,braket,faktor,stmaryrd}

\usepackage{graphicx}
\usepackage{bm}
\usepackage[
	pdftitle={},
	pdfauthor={},
	ocgcolorlinks,
	linkcolor=linkblue,
	citecolor=linkred,
	urlcolor=linkred]
{hyperref}
\usepackage{color}
\definecolor{linkred}{rgb}{0.75,0,0}
\definecolor{linkblue}{rgb}{0,0,0.75}
\usepackage{enumitem}

\theoremstyle{plain}
\newtheorem{theorem}{Theorem}
\newtheorem{proposition}{Proposition}[section]
\newtheorem{lemma}[proposition]{Lemma}
\newtheorem{thm}[proposition]{Theorem}

\newtheorem{cor}[proposition]{Corollary}

\newcommand{\bt}{\begin{theorem}}
\newcommand{\et}{\end{theorem}}

\theoremstyle{definition}
\newtheorem{definition}[proposition]{Definition}
\newtheorem{remark}[proposition]{Remark}
\newcommand{\beq}{\begin{equation}}
\newcommand{\eeq}{\end{equation}}
\newcommand{\bl}{\begin{lemma}}
\newcommand{\el}{\end{lemma}}

\newcommand{\cal}{\mathcal}

\newcommand{\cc}{\mathcal{C}}

\newcommand{\cl}{\mathcal{L}}
\newcommand{\ce}{\mathcal{E}}
\newcommand{\co}{\mathcal{O}}

\newcommand{\ct}{\mathcal{T}}
\newcommand{\cu}{\mathcal{U}}

\newcommand{\cf}{{\cal F}}
\newcommand{\cz}{{\cal Z}}

\newcommand{\bc}{\mathbb{C}}

\newcommand{\bk}{\mathbb{K}}
\newcommand{\bn}{\mathbb{N}}

\newcommand{\bq}{\mathbb{Q}}
\newcommand{\br}{\mathbb{R}}
\newcommand{\bz}{\mathbb{Z}}

\newcommand{\WP}{^{WP}\hspace{-.4mm}}

\newcommand{\modm}{\cal M}
\newcommand{\un}{1\!\!1}

\allowdisplaybreaks

\begin{document}
	
\title{Super Weil-Petersson measures on the moduli space of curves}
\author{Paul Norbury}
\address{School of Mathematics and Statistics, University of Melbourne, VIC 3010, Australia}
\email{\href{mailto:norbury@unimelb.edu.au}{norbury@unimelb.edu.au}}
\thanks{}
\subjclass[2010]{32G15; 14H81; 58A50}
\date{\today}

\begin{abstract}
%In this paper we define finite measures on the moduli space of smooth curves with marked points.   The measures arise naturally out of the super Weil-Petersson metric defined over the moduli space of super curves and the total measures can be identified with the volume of the moduli space of super curves.   The construction of the measures is analogous to the construction of the Weil-Petersson metric, using the extra data of a spin structure.   The measures are indexed by the behaviour of the spin structure at marked points, labeled Neveu-Schwarz or Ramond.  The Neveu-Schwarz volume polynomials satisfy a recursion relation discovered by Stanford and Witten, analogous to Mirzakhani's recursion relations between Weil-Petersson volumes of moduli spaces of hyperbolic surfaces.  We prove here that the Ramond boundary behaviour produces deformations of the Neveu-Schwarz volume polynomials, satisfying a variant of the Stanford and Witten recursion relations.

The super Weil-Petersson metric defined over the moduli space of smooth super curves produces a natural measure over the moduli space of smooth curves.  The construction of the measure uses the extra data of a spin structure on each smooth curve.  When we allow marked points, the construction produces a collection of measures indexed by the behaviour of the spin structure at marked points---Neveu-Schwarz or Ramond.  In this paper we define these measures, and prove that they are finite.  Each total measure gives the super volume of the moduli space of super curves with marked points.  The Neveu-Schwarz volumes are polynomials that satisfy a recursion relation discovered by Stanford and Witten, analogous to Mirzakhani's recursion relations between Weil-Petersson volumes of moduli spaces of hyperbolic surfaces.  We prove here that the Ramond boundary behaviour produces deformations of the Neveu-Schwarz volume polynomials, satisfying a variant of the Stanford-Witten recursion relations.

\end{abstract}
\maketitle

\tableofcontents

\section{Introduction}  \label{sec:intro}

The moduli space $\modm_{g,n}$ of genus $g$ curves with $n$ marked points comes equipped with a natural symplectic form $\omega^{WP}$ known as the Weil-Petersson form.  It is the imaginary part of the natural Hermitian metric on the (co)tangent bundle over $\modm_{g,n}$ due to Weil \cite{WeiMod} defined by using the complete hyperbolic metric in a conformal class following Petersson \cite{PetUbe}.  At a point $(C,p_1,\dots,p_n)\in\modm_{g,n}$, the Hermitian metric is defined via the Petersson pairing on the vector space of quadratic differentials
\begin{equation}  \label{WPmetric}
\langle\eta,\xi\rangle:=\int_\cc\frac{\overline{\eta}\xi}{h},\qquad \eta,\xi\in H^0(C,\omega_{\cc}^{\otimes 2}(D))
\end{equation}
where $D=(p_1,\dots,p_n)$ and $h$ is the complete hyperbolic metric on $C-D$.  %The pairing \eqref{WPmetric} defines a Hermitian metric on $\modm_{g,n}$ with imaginary part a K\"ahler form $\omega^{WP}$.
The top power of its imaginary part $\omega^{WP}$ produces a finite measure on $\modm_{g,n}$ with total measure the Weil-Petersson volume, conveniently defined by
\[
V^{WP}_{g,n}:=\int_{\modm_{g,n}}\exp\omega^{WP}.
\]

Over the moduli space $\modm^{\text{spin}}_{g,n}$ of genus $g$ {\em spin} curves with $n$ marked points there is a natural finite measure, which %also uses the complete hyperbolic metric in any conformal class.  It 
pushes forward to define a family of finite measures on $\modm_{g,n}$.  The measure is constructed via a natural vector bundle
\[ \begin{array}{c}E_{g,n}\\\downarrow\\\modm^{\text{spin}}_{g,n}\end{array}
\] 
and the Weil-Petersson form $\omega^{WP}$ defined over $\modm^{\text{spin}}_{g,n}$ by pullback under the map that forgets the spin structure.  The bundle 
$E_{g,n}$ can be identified with the normal bundle of $\modm^{\text{spin}}_{g,n}\to\widehat{\modm}_{g,n}$, the moduli space of genus $g$ supercurves with $n$ marked points containing its reduced space.     The vector bundle $E_{g,n}$ is defined in Definition~\ref{obsbun} using only classical constructions, i.e. without reference to the super moduli space.  Just as the Weil-Petersson metric is naturally defined on the cotangent bundle of $\modm_{g,n}$, the dual bundle $E_{g,n}^\vee$, which has fibre given by $3/2$ differentials, in place of quadratic differentials, arises in a similar way, also via Serre duality.  %It is not important, since an Euler form of a bundle and its dual naturally agree up to a possible change in sign.  
 A natural Euler form on $E_{g,n}^\vee$ is defined, in the so-called Neveu-Schwarz case, using the following natural Hermitian metric:
\begin{equation}  \label{hermetric} 
\langle\eta,\xi\rangle:=\int_\cc\frac{\overline{\eta}\xi}{\sqrt{h}},\qquad \eta,\xi\in H^0(C,\omega_C^{\otimes3/2}(D)).
\end{equation}
The measure on $\modm^{\text{spin}}_{g,n}$ is given by the top degree part of the differential form $e(E_{g,n}^\vee)\exp(\omega^{WP})$, which turns out to define a {\em finite} measure on $\modm^{\text{spin}}_{g,n}$.  
The natural Hermitian metric \eqref{hermetric} on $E_{g,n}^\vee$ uses the complete hyperbolic metric on each curve in a similar way to the construction of the Weil-Petersson form \eqref{WPmetric}.  The more general case of \eqref{hermetric}  for Neveu-Schwarz and Ramond points allows different pole behaviour at the marked points and is given in Definition~\ref{hermetricNSR}.
The close resemblance of \eqref{WPmetric} and \eqref{hermetric}  reflects the fact that \eqref{hermetric} arises from a super-generalisation of \eqref{WPmetric}.  The hyperbolic metric is described locally on $C$ by $h=|dz|^2/(\text{Im}\hspace{.4mm} z)^2$ (locally contained in the upper half plane $\text{Im}\hspace{.4mm} z>0$).  Write $\eta=f(z)dz^2$ and $\xi=g(z)dz^2$ locally, then the integrand in \eqref{WPmetric} becomes 
\[ \overline{f(z)}g(z)(\text{Im}\hspace{.4mm} z)^2|dz|^2.
\]
A super generalisation of the hyperbolic metric in super coordinates $(z|\theta)$, where $z$ is bosonic and $\theta$ is fermionic,  replaces $(\text{Im}\hspace{.4mm} z)^2$ by
\[ (\text{Im}\hspace{.4mm} z+\tfrac12\theta\bar{\theta})^2=(\text{Im}\hspace{.4mm} z)^2+\theta\bar{\theta}(\text{Im}\hspace{.4mm} z).
\]  
After integrating out the fermionic directions $d\theta d\bar{\theta}$, this produces the local integrand for $\eta=f(z)dz^{3/2}$ and $\xi=g(z)dz^{3/2}$ given by
\[ \overline{f(z)}g(z)(\text{Im}\hspace{.4mm} z)|dz|^2.
\]
which explains the factor of $1/\sqrt{h}=\text{Im}\hspace{.4mm} z$ in \eqref{hermetric}.

%See Section~\ref{sec:eulerform} for details.    

The moduli space of spin curves decomposes into components
\[\modm_{g,n}^{\text{spin}}=\bigsqcup_{\sigma\in\{0,1\}^n}\modm_{g,\sigma}^{\text{spin}}\]
where $\sigma=(\sigma_1,...,\sigma_n)\in\{0,1\}^n$ and $\sigma_j$ describes the behaviour of the spin structure at the marked point $p_j$, denoted {\em Neveu-Schwarz} when $\sigma_j=1$ and  {\em Ramond} when $\sigma_j=0$.  The vector bundle $E_{g,n}$ restricts to a rank $2g-2+\frac12(n+|\sigma|)$ bundle $E_{g,\sigma}\to\modm^{\text{spin}}_{g,\sigma}\subset\modm_{g,n}^{\text{spin}}$, for $|\sigma|=\sum\sigma_j$, and $\deg e(E_{g,\sigma}^\vee)=4g-4+n+|\sigma|$.  

Each component inherits a finite measure with associated volume,
\begin{equation}  \label{supvol}
\widehat{V}^{WP}_{g,\sigma}:=\epsilon_{g,\sigma}\int_{\modm_{g,\sigma}^{\text{spin}}}e(E_{g,\sigma}^\vee)\exp\omega^{WP}
\end{equation}
for $\epsilon_{g,\sigma}=2^{g-1+\frac12(n+|\sigma|)}$.  The factor $\epsilon_{g,\sigma}$ is chosen to make recursion and restriction formulae more natural.  This measure was defined in \cite{NorEnu} over $\modm^{\text{spin}}_{g,\sigma}$ for $\sigma=1^n$ known as the Neveu-Schwartz component (which is the union of two connected components). The volume of the Neveu-Schwartz component was calculated via recursive formulae by Stanford and Witten \cite{SWiJTG}.  This paper considers the general problem of defining and calculating the volume of any component of $\modm^{\text{spin}}_{g,n}$.

%The super volume of a supermanifold with reduced space a symplectic manifold can be expressed via the integral of an Euler form of the normal bundle of the reduced space combined with a power of the symplectic form as in \eqref{supvol}.  
Given a vector bundle $E\to M$ defined over a smooth symplectic manifold $(M,\omega)$, the sheaf of smooth sections of the exterior algebra $\Lambda^*E^\vee$ of the dual bundle $E^\vee$ defines a smooth supermanifold $\widehat{M}$ with reduced space $M$, and $\int_Me(E^\vee)\exp(\omega)$ can be interpreted as the supervolume of $\widehat{M}$.  The exterior algebra $\Lambda^*W^\vee$ of the dual of a vector space over a field $k$ is a super-commutative generalisation of the ring $\text{Sym}^*V^\vee\cong k[x_1,...,x_n]$, for $x_i\in V^\vee$. %, a fundamental ring in algebraic geometry, usually represented in the form $k[x_1,...,x_n]\cong\text{Sym}^*V^\vee$ the dual of a dimension $n$ vector space. % which is independent of the choice of the Euler form when $M$ is closed.  
The moduli space of genus $g$ super curves with $n$ marked points $\widehat{\modm}_{g,n}$ can be defined smoothly by the sheaf of smooth sections of %the exterior algebra of the bundle 
$\Lambda^*E_{g,n}^\vee$, where $E_{g,n}$ is defined in Definition~\ref{obsbun}, with reduced space the moduli space $\modm_{g,n}^{\rm spin}$ of spin curves  with $n$ marked points.  Although the results in this paper will not rely on super constructions, the super geometry viewpoint produces useful heuristic ideas. Most significantly in the Neveu-Schwarz case it led to the work of Stanford and Witten \cite{SWiJTG} which produced recursive formulae for the super volumes analogous to those of Mirzakhani.

A powerful method pioneered by Mirzakhani \cite{MirSim} to study Weil-Petersson volumes is to consider a family of symplectic deformations $\omega(L_1,...,L_n)$ of the Weil-Petersson form for $(L_1,...,L_n)\in\br_{\geq 0}^n$, which produces corresponding 
volumes 
$V_{g,n}(L_1,...,L_n):=\int_{\modm_{g,n}}\exp\omega(L_1,...,L_n)$.     
Mirzakhani proved that $V_{g,n}(L_1,...,L_n)$ is polynomial in $L_i$ and satisfies a recursion that uniquely determines all polynomials from the initial polynomials $V_{0,3}(L_1,L_2,L_3)=1$  and $V_{1,1}(L_1)=\frac{1}{48}(L^2+4\pi^2)$. 
We use the same deformation of the Weil-Petersson form to study the measure defined in \eqref{supvol}. 

It will be useful to deform the symplectic form $\omega^{WP}$ only at Neveu-Schwarz points as follows:
\begin{equation}  \label{nsdef}
\widehat{V}_{g,n}^{(m)}(L_1,...,L_n):=\epsilon_{g,\sigma}\int_{\modm_{g,\sigma}^{\rm spin}}\hspace{-6mm}e(E^\vee_{g,\sigma})\exp\omega(L_1,...,L_n,0^m),\quad \sigma=(1^n,0^m).
\end{equation}
 A consequence of Theorem~\ref{main} below, in the $(g,n)=(0,1)$ case, and for general $(g,n)$ using an algebro-geometric construction of the volume proven in Corollary~\ref{volint}, is the property that $\widehat{V}_{g,n}^{(m)}(L_1,...,L_n)$ is a polynomial in $(L_1,...,L_n)$ of degree $2g-2+m$.

For a given genus $g$, allow $n$ Neveu-Schwarz marked points and any number of Ramond points.  Deform $\omega^{WP}$ only at the Neveu-Schwarz points and collect these volumes into the following generating function over all possible numbers of Ramond points, which we refer to as a gas of Ramond points.
\begin{equation}  \label{supvolpol}
\widehat{V}_{g,n}\WP(s,L_1,...,L_n):=\sum_{m=0}^\infty\frac{s^m}{m!}\widehat{V}_{g,n}^{(m)}(L_1,...,L_n)
%\widehat{V}^{WP}_{g,n}(s,L_1,...,L_n):=\sum_{m=0}^\infty\frac{s^m}{m!}\int_{\modm_{g,(1^n,0^m)}^{\rm spin}}\hspace{-6mm}e(E_{g,(1^n,0^m)})\exp[\omega(L_1,...,L_n,0^m)].
\end{equation}
The Ramond points are essentially unordered due to the factor $1/m!$, and the only non-zero terms occur when $m$ is even due to a spin parity condition.  
We will denote the special case of $(g,n)=(0,1)$ by the {\em disk} function which gives the genus zero volumes with exactly one Neveu-Schwarz marked point and a gas of Ramond  points:
\[
\widehat{V}_{0,1}\WP(s,L):=\sum_{m=0}^\infty\frac{s^m}{m!}\widehat{V}_{0,1}^{(m)}(L).
%\widehat{V}_{0,1}(s,L):=\sum_{m=0}^\infty\frac{s^m}{m!}\int_{\modm_{0,(1,0^m)}^{\rm spin}}\hspace{-6mm}e(E_{g,(1,0^m)})\exp[\omega(L,0^m)].
\]
%which specialises at $L=0$ to $\widehat{V}_{0,1}(s,0)=\sum_{m=0}^\infty\frac{s^m}{m!}\widehat{V}^{WP}_{0,(1,0^m)}$.
Define the kernel
\[ D(x,y,z):=\frac{1}{4\pi}\left(\frac{1}{\cosh\frac{x-y-z}{4}}-\frac{1}{\cosh\frac{x+y+z}{4}}\right)
\]
which arises in \cite{NorEnu,SWiJTG} via algebraic geometry, respectively supergeometry.
\begin{theorem}  \label{main}
The disk function $\widehat{V}^{WP}_{0,1}(s,L)$ is uniquely determined by the recursion
\[ \widehat{V}_{0,1}\WP(s,L)=\frac{s^2}{2!}+\frac{1}{2L}\int_0^\infty\int_0^\infty\hspace{-2mm} xyD(L,x,y)\widehat{V}_{0,1}\WP(s,x)\widehat{V}_{0,1}\WP(s,y)dxdy.
\]
%and the initial value $\widehat{V}^{WP}_{0,(1,0,0)}=1$.
\end{theorem}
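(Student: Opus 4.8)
The plan is to realise this self-referential recursion as the image, under integration over $\modm^{\rm spin}_{0,\sigma}$ followed by Mirzakhani-style unfolding, of a generalised McShane--Mirzakhani identity on a hyperbolic disk carrying one Neveu--Schwarz geodesic boundary of length $L$ and a gas of Ramond cone points. For each even $m$ I would fix the surface $\Sigma$ of type $(0,1)$ with $m$ Ramond points and write the super-McShane identity that expresses a constant (the total horocyclic measure on the NS boundary) as a sum over the embedded pairs of pants $P\subset\Sigma$ bounded by the NS boundary together with two interior simple closed geodesics $\gamma_1,\gamma_2$ of lengths $x,y$. The weight attached to such a pants is exactly the kernel $D(L,x,y)$: it is the super analogue of Mirzakhani's $\tfrac{2}{1+e^{(x+y)/2}}$ and produces the $\cosh$-form in the statement. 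This identity, due in spirit to Stanford--Witten, is the geometric input I would take as the starting point.

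Next I would integrate the identity against the measure $e(E_{0,\sigma}^\vee)\exp\omega(L,0^m)$, weighted by $\epsilon_{0,\sigma}$ and summed over $m$ with the gas factor $s^m/m!$. Cutting $\Sigma$ along $\gamma_1,\gamma_2$ separates it into the central pants $P$ and two sub-disks $\Sigma_1,\Sigma_2$, each again of type $(0,1)$ with an induced NS boundary of length $x$, resp.\ $y$, carrying the residual Ramond points. Mirzakhani's unfolding trades the integral over $\modm^{\rm spin}_{0,\sigma}$ of the sum over $\{\gamma_1,\gamma_2\}$ for an integral over the moduli of the cut pieces: the twist parameters integrate out against Wolpert's $\omega^{WP}=\sum d\ell\wedge d\tau$ to give the measures $x\,dx$ and $y\,dy$; the gas factor $s^m/m!$ distributes multiplicatively, the $\binom{m}{m_1}$ allocations of Ramond points to the two sub-disks combining with $1/m!$ into $1/m_1!\,m_2!$, so the sub-disk integrals assemble into $\widehat V^{WP}_{0,1}(s,x)\,\widehat V^{WP}_{0,1}(s,y)$; the symmetry of the two legs supplies the $\tfrac12$; and the $\tfrac1L$ together with the integrated shape of $D$ reflects that the identity is naturally one for $\partial_L\!\big(L\,\widehat V^{WP}_{0,1}\big)$, recovered for $\widehat V^{WP}_{0,1}$ itself after integrating in $L$ from $0$ and dividing by $L$. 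The summands admitting no pants decomposition are the minimal, non-decomposable configuration with exactly two Ramond points; here the reduced space sits over the point $\modm_{0,3}$, the bundle $E_{0,\sigma}$ has rank $0$ so $e(E_{0,\sigma}^\vee)=1$, and a direct evaluation of this zero-dimensional integral, including $\epsilon_{0,\sigma}$ and the count of spin structures, yields the base term $s^2/2!$.

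With the recursion in hand, uniqueness is immediate: comparing coefficients of $s^m$, the quadratic term feeds only on $\widehat V^{WP}_{0,1}$ evaluated at strictly fewer Ramond points (each sub-disk needs at least two), so $\tfrac1{m!}\widehat V^{(m)}_{0,1}(L)$ is determined by lower $m$, the base case pinning down $m=2$ and the parity condition killing odd $m$. Since $\int_0^\infty\!\int_0^\infty xy\,D(L,x,y)\cdot(\text{polynomial in }x)(\text{polynomial in }y)\,dx\,dy$ is a polynomial in $L$ — the moment integrals $\int_0^\infty t^{k}/\cosh\tfrac{t}{4}\,dt$ being finite — the same induction yields the polynomiality of $\widehat V^{(m)}_{0,1}$ asserted in the introduction.

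I expect the main obstacle to be the unfolding step in the presence of the Euler form $e(E^\vee_{0,\sigma})$: one must show that this factor, and not merely $\exp\omega^{WP}$, splits correctly under cutting along $\gamma_1,\gamma_2$, so that the integrand genuinely factorises into the two sub-disk measures times a local pants contribution. Whereas Wolpert's formula disposes of $\exp\omega^{WP}$, the factorisation of $e(E^\vee_{0,\sigma})$ requires a compatible splitting of $E_{0,\sigma}$ under gluing, which I would extract from its description as a normal bundle via Serre duality in Definition~\ref{obsbun}, keeping careful track of the spin structures glued across $\gamma_1,\gamma_2$ and of the resulting signs and parity constraints at the Ramond points.
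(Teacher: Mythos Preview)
Your proposal is the heuristic supergeometric argument that the paper itself discusses in Section~\ref{supergeom} and explicitly labels ``so far incomplete''.  The paper's actual proof is entirely algebro-geometric: it passes via Laplace transform to the equivalent recursion \eqref{Frec}, reduces by the translation formula (Theorem~\ref{th:transl}) to the top-degree version \eqref{Grec}, and then computes directly on $\overline{\modm}_{0,2m+3}$---using the comparison of $\psi_1$ with its pullback under the forgetful map and the boundary restriction \eqref{glue}---that $\int\psi_1^m\,\Omega_{0,2m+3}(e_1\otimes e_0^{\otimes 2m+2})=(2m-1)!!$.  The coefficients of $G$ are then Catalan numbers and \eqref{Grec} is the Catalan convolution identity; Theorem~\ref{main} follows.

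Beyond the factorisation of $e(E^\vee_{0,\sigma})$ that you already flag, your outline has a concrete omission.  On a genus~$0$ surface with one Neveu--Schwarz boundary $\beta$ and $m$ Ramond punctures, the embedded pairs of pants meeting $\beta$ are not only those bounded by two interior simple geodesics: a pant may also have one or two of the Ramond punctures themselves as cuffs.  These configurations carry different kernels (the Ramond kernels of Stanford--Witten), so the McShane identity you write down is incomplete and the unfolded recursion acquires extra terms with these new kernels.  Moreover, even for the pants you do consider, cutting along an interior geodesic produces sub-disks whose new boundary may be Ramond rather than Neveu--Schwarz, depending on the spin holonomy; the integrals over such pieces are not $\widehat V^{WP}_{0,1}$ but Ramond-boundary volumes.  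The recursion in Theorem~\ref{main} only emerges once all of these extra contributions are shown to vanish, and the paper states plainly that a proof of this vanishing is not available---which is precisely why the author resorts to intersection theory on the compactification.
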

Theorem~\ref{main} defines a recursion in increasing powers of $s$.  We obtain 
\[ \widehat{V}_{0,1}\WP(s,L)=\frac{s^2}{2!}+\left(6\pi^2+\frac12L^2\right)\frac{s^4}{4!}+\left(330\pi^4 + 30\pi^2L^2 + \frac38L^4\right)\frac{s^6}{6!}+...
\]
and in particular, the $L=0$ terms produce the volumes $\widehat{V}^{WP}_{0,(1,0^m)}$. 

The disk recursion generalises to allow any $\widehat{V}_{g,n}\WP(s,\vec{L})$. 
%\[ R(x,y,z):=\frac{1}{8\pi}\left(\frac{1}{\cosh\frac{x+y-z}{4}}-\frac{1}{\cosh\frac{x+y+z}{4}}+\frac{1}{\cosh\frac{x-y-z}{4}}-\frac{1}{\cosh\frac{x-y+z}{4}}\right)\]
%which also arises in \cite{NorEnu,SWiJTG}. % and satisfies $D(x,y,z)=R(x,y,z)+R(x,z,y)$.  
For any $n>0$, write $\vec{L}=(L_1,...,L_n)\in\br^n$ and $L_I:=\vec{L}|_I\in\br^I$ its restriction to $I\subset\{1,...,n\}$.  Define a second kernel $R(x,y,z):=\tfrac12(D(x+y,z,0)+D(x-y,z,0))$.
%For $k\leq n$, define $\un_k:=(\overbrace{1,..,1}^k,0,..,0)\in\{0,1\}^n$ and $\vec{L}_k=(L_1,..,L_k,0,..,0)\in\br_{\geq 0}^n$  
\begin{theorem}   \label{conj}
The following recursion  uniquely determines $\widehat{V}_{g,n}\WP(s,\vec{L})$ up to $O(s^4)$.
\begin{align} \label{reconj}
L_1&\widehat{V}_{g,n}\WP(s,\vec{L})=\tfrac12\int_0^\infty\hspace{-2mm}\int_0^\infty\hspace{-2mm} xyD(L_1,x,y)P_{g,n+1}(x,y,L_K)dxdy\\
&+\sum_{j=2}^n\int_0^\infty \hspace{-2mm}xR(L_1,L_j,x)\widehat{V}_{g,n-1}\WP(s,x,L_{K\backslash\{j\}})dx+\delta_{1n}(\tfrac{s^2\delta_{0g}}{2}+\tfrac{\delta_{1g}}{8})L_1+O(s^4)
\nonumber 
\end{align}
for $K=(2,...,n)$ and
\[P_{g,n+1}(x,y,L_K)=\widehat{V}_{g-1,n+1}\WP(s,x,y,L_K)+\hspace{-3mm}\mathop{\sum_{g_1+g_2=g}}_{I \sqcup J = K}\hspace{-2mm}\widehat{V}_{g_1,|I|+1}\WP(s,x,L_I)\widehat{V}_{g_2,|J|+1}\WP(s,y,L_J).
\]
\end{theorem}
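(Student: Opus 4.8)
The plan is to verify the recursion order by order in $s$. Because $\widehat{V}_{g,n}\WP(s,\vec L)=\sum_{m}\frac{s^m}{m!}\widehat{V}_{g,n}^{(m)}(\vec L)$ contains only even powers of $s$ (the spin-parity constraint kills the odd terms), the phrase ``up to $O(s^4)$'' means precisely that the coefficients of $s^0$ and of $s^2$ must agree on the two sides. Before checking these I would dispose of the uniqueness claim: reading \eqref{reconj} as an expression for $\widehat{V}_{g,n}\WP$, every term on the right involves only quantities of strictly smaller complexity $2g-2+n$ — the first integral couples to $\widehat{V}_{g-1,n+1}\WP$ and to products $\widehat{V}_{g_1,|I|+1}\WP\,\widehat{V}_{g_2,|J|+1}\WP$ with $g_1+g_2=g$, $I\sqcup J=K$, and the middle sum couples to $\widehat{V}_{g,n-1}\WP$ — so an induction on $2g-2+n$ fixes all coefficients once the base data are given. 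The base data are exactly the terms $\delta_{1n}\big(\tfrac{s^2\delta_{0g}}{2}+\tfrac{\delta_{1g}}{8}\big)L_1$ together with the disk function $\widehat{V}_{0,1}\WP$, which is itself pinned down to all orders by Theorem~\ref{main}; indeed for $(g,n)=(0,1)$ the recursion \eqref{reconj} reduces to the recursion of Theorem~\ref{main} (there even exactly, with no $O(s^4)$ error).

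At order $s^0$ the coefficient is the pure Neveu--Schwarz volume $\widehat{V}_{g,n}^{(0)}$. Here the term $\tfrac{s^2\delta_{0g}}{2}L_1$ drops out, $P_{g,n+1}$ collapses to its Neveu--Schwarz glueing terms, and the kernels $D$ and $R$ are exactly those governing the Neveu--Schwarz recursion of Stanford and Witten \cite{SWiJTG} (equivalently the algebro-geometric recursion of \cite{NorEnu}), with the surviving base term $\tfrac{\delta_{1g}}{8}L_1$ supplying the $\widehat{V}_{1,1}$ seed. Thus the $s^0$ component is the established Neveu--Schwarz recursion and requires no new argument.

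The content is therefore the $s^2$ coefficient, which is the recursion for $\tfrac12\widehat{V}_{g,n}^{(2)}$, the volume of configurations with $n$ Neveu--Schwarz and exactly two Ramond points. I would compute these from the algebro-geometric presentation underlying Corollary~\ref{volint}, writing $\widehat{V}_{g,n}^{(2)}$ as an intersection number on the compactified spin moduli space against the class $e(E^\vee)$ (a $\Theta$-type class), the $\psi$-classes at the Neveu--Schwarz points, and the $\kappa_1$-class carrying the Weil--Petersson deformation; the recursion is then the Laplace transform in the boundary lengths of the relations these numbers satisfy under restriction to the boundary divisors on which boundary $1$ bounds a pair of pants (equivalently, the Virasoro/topological-recursion relations for the corresponding spectral curve). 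Two features make the $s^2$ case close with the \emph{same} kernels $D,R$ as at $s^0$. First, spin parity forces each connected component to carry an even number of Ramond points, so in every degeneration both Ramond points must localise on a single component; consequently the product term contributes only $\widehat{V}_{g_1}^{(2)}\widehat{V}_{g_2}^{(0)}+\widehat{V}_{g_1}^{(0)}\widehat{V}_{g_2}^{(2)}$, with no $\widehat{V}^{(1)}\widehat{V}^{(1)}$ cross term, and $\widehat{V}_{g,n}^{(2)}$ enters linearly. Second, the distinguished pair of pants carries no Ramond marked point, so the Ramond data simply ride along on the complement; the sole exception is the disk $(g,n)=(0,1)$, whose two-Ramond value $\widehat{V}_{0,1}^{(2)}=1$ (from Theorem~\ref{main}) produces precisely the base term $\tfrac{s^2}{2}L_1$.

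The main obstacle is establishing this $s^2$ boundary-restriction formula rigorously: one must prove that, to this order, the restriction of $e(E^\vee_{g,\sigma})$ to the relevant boundary divisor factors as the claimed product of lower two- and zero-Ramond volumes weighted by the unchanged kernels, ruling out any contribution from Ramond nodes or mixed-parity configurations and fixing the normalisations $\epsilon_{g,\sigma}$ and $1/m!$. This is also why the theorem is stated only up to $O(s^4)$: at order $s^4$ one must track four Ramond points, where genuinely new degeneration types (Ramond nodes, modified kernels) are expected to enter and the simple ``gas rides on the complement'' picture no longer suffices. Once the $s^2$ restriction formula is in hand, matching the Laplace-transformed contributions to $D$ and $R$ is a routine computation parallel to the Neveu--Schwarz case.
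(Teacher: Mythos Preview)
Your approach is genuinely different from the paper's and, as written, has real gaps.

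First, a small point on uniqueness: it is not true that every term on the right of \eqref{reconj} has strictly smaller $2g-2+n$. The product term with $g_1=0$, $I=\emptyset$ gives $\widehat{V}_{0,1}\WP(s,x)\,\widehat{V}_{g,n}\WP(s,y,L_K)$, so $\widehat{V}_{g,n}\WP$ appears on both sides (the paper notes this explicitly in the introduction). Your induction is salvageable because $\widehat{V}_{0,1}^{(0)}=0$, so at order $s^0$ this term vanishes and at order $s^2$ it only feeds back $\widehat{V}_{g,n}^{(0)}$; but you must argue order by order in $s$, not purely on Euler characteristic.

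The serious gap is your claim that ``the distinguished pair of pants carries no Ramond marked point, so the Ramond data simply ride along on the complement.'' In the super-Mirzakhani picture the pair of pants containing $\beta_1$ can perfectly well have a Ramond puncture as one of its three boundary components, and Stanford and Witten compute \emph{different} kernels for such pairs of pants. The geometric construction therefore produces \eqref{reconj} \emph{plus} extra terms with Ramond kernels; one must then prove these extra contributions vanish (heuristically because super volumes with Ramond geodesic boundary vanish). You have not addressed this, and the paper itself says in Section~\ref{supergeom} that this geometric route is currently incomplete for exactly this reason. Your ``main obstacle'' paragraph also concedes that the boundary-restriction formula is not established; without it your $s^2$ argument is a plausibility sketch, not a proof.

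The paper's proof is entirely different and avoids all of this. It translates \eqref{reconj} into Virasoro constraints $\bigl((2m+1)!!\partial_{t_m}-\cl_m-\tfrac{s^2}{2\hbar}\delta_{m,0}\bigr)\bar\cz=0$ on the partition function $\bar\cz$ (Proposition~\ref{vireq}), then proves these constraints to $O(s^4)$ by comparing $\bar\cz$ with the kappa-class tau function $\cz^\bk$ of \cite{KNoPol}. The key geometric input is Proposition~\ref{OmK}, an explicit pushforward identity expressing $\int\Omega_{g,n+2}(1^n,0^2)\prod\psi_j^{k_j}$ in terms of intersections with $\bk_{2g-3+n}$ and $\bk_{2g-2+n}$; this yields $\bar\cz=\exp\{\tfrac{s^2}{2}(\cl_{-1}+t_0)\}\cz^\bk+O(s^4)$. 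The proof then finishes by a short Virasoro commutator computation using the known constraints \eqref{virK} for $\cz^\bk$. No boundary-restriction of Euler forms, no Ramond-kernel cancellation, is needed.
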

Theorem~\ref{conj} leads to the recursive calculation of $\widehat{V}_{g,n}^{(2)}(L_1,...,L_n)$.
The special cases of $(g,n)=(0,1)$ and $(g,n)=(1,1)$ produce the $\delta_{1n}$ term in \eqref{reconj} which encodes the initial conditions
$\widehat{V}_{0,1}^{(2)}(L)=1$ and $\widehat{V}_{1,1}^{(0)}(L)=\frac18$.  We expect that one can remove the $O(s^4)$ in \eqref{reconj} so that the recursion holds exactly, as it does in the disk case.  
The recursion resembles Mirzakhani's recursion \cite{MirSim} although it differs due to the $(0,1)$ and $(0,2)$ terms on the right hand side.  In particular this leads to the appearance of the term $\widehat{V}_{g,n}\WP(s,\vec{L})$ on both the left and right hand sides.  When $n=0$, the volume in \eqref{supvolpol} defines series $F_g(s)$ for $g\geq0$.  We can calculate these series from $\widehat{V}_{g,1}\WP(s,L)$ via the recursion \eqref{reconj} together with the relation 
\[
\left(2g-2+s\frac{\partial}{\partial s}\right)F_g(s)=\widehat{V}_{g,1}\WP(s,2\pi i)
\]
which is stated more generally in \eqref{dilaton}.

The $s=0$ case of Theorem~\ref{conj} is proven in \cite{NorEnu} via algebro-geometric techniques and in \cite{SWiJTG} via heuristic supergeometry techniques.  What is striking is that the same kernels appear in \eqref{reconj} for the $s\neq 0$ case, and in particular that the kernels are $s$-independent.

Theorems~\ref{main} and \ref{conj} are stated in terms of differential geometric structures.  Their proofs, given in Sections~\ref{stablecurves} and \ref{virastr}, use algebraic geometry, specifically properties of intersection numbers over the compactification of the moduli space.  The algebro-geometric calculations are aided by algebraic properties of partial differential Virasoro operators that naturally act on partition functions storing the intersection numbers---see Section~\ref{virastr}.  Translation of the partition functions, given by Theorem~\ref{th:transl}, simplifies the algebro-geometric calculations, and commutator properties of the operators uncover a tractable algebro-geometric problem to be solved.    Despite the success of the algebro-geometric techniques, differential geometry, via heuristic ideas from supergeometry, actually predicts the interesting structure of $\widehat{V}_{g,n}\WP(s,\vec{L})$ shown by the recursion \eqref{reconj} and differential geometry should provide the proofs.  Essentially algebraic geometry has the power to calculate and prove recursive structure, but it does not know what to prove, instead relying on guidance from (super) differential geometry.   This is analogous to Mirzakhani's work \cite{MirSim} where she used differential geometry to deduce calculations of intersection numbers in algebraic geometry.  We discuss this further in Section~\ref{supergeom} which describes an approach to a proof using super differential geometry.

{\em Acknowledgements.} I am grateful for discussions with Alexander Alexandrov, Alessandro Giacchetto and Edward Witten.

\section{Moduli space of spin curves}

A twisted, pointed curve $(\cc,p_1,\dots,p_n)$ with group $\bz_2$ is a compact orbifold such that each marked point $p_i$ has isotropy group $\bz_2$ and all other points have trivial isotropy group.  It has a canonical sheaf $\omega_{\cc}$ which defines an orbifold line bundle over $\cc$.  The moduli space of twisted, pointed curves  with group $\bz_2$ is denoted by $\modm^{(2)}_{g,n}$.  A twisted curve comes equipped with a map which forgets the orbifold structure $\rho:\cc\to C$ where $C$ is a pointed curve known as the coarse curve of $\cc$, often also denoted by $C=|\cc|$.  We abuse notation and write $D=(p_1,\dots,p_n)\subset C$ to denote the image in the coarse curve of the points $D\subset\cc$ with non-trivial isotropy.  A spin curve is a twisted curve together with a square root of its log-canonical sheaf.  Its moduli space is defined in  \cite{FJRWit} as follows:
\begin{definition}  \label{def:modspin}
The moduli space of spin curves is defined by
\[\modm_{g,n}^{\text{spin}}=\{(\cc,\theta,p_1,...,p_n,\phi)\mid \phi:\theta^2\stackrel{\cong}{\longrightarrow}\omega_{\cc}^{\text{log}}\}
\]
where $\cc$ is a twisted curve and $\theta$ is a line bundle over $\cc$.
\end{definition}
There is a natural map
\begin{equation} \label{forgets}
p:\modm_{g,n}^{\text{spin}}\to\modm_{g,n}
\end{equation}
obtained by forgetting the spin structure and mapping to the underlying coarse curve via $\rho$, i.e. $p$ is a composition of the map induced by $\rho$ together with the  $2^{2g}$ to 1 map to the moduli space of twisted curves $\modm_{g,n}^{\text{spin}}\to\modm^{(2)}_{g,n}$.  There are $2^{2g+n-1}$ choices of $(\theta,\phi)$ for each twisted, pointed curve  $(\cc,p_1,...,p_n)\in\modm^{(2)}_{g,n}$, and after fixing representation data at each $p_i$, described below, there are $2^{2g}$ different spin structures.

A spin structure on a twisted, pointed curve $(\cc,p_1,\dots,p_n)$ defines a line bundle $\theta\to\cc$ that is a square root of the log-canonical bundle $\theta^2\cong\omega_\cc^{\text{log}}$.  Both $\theta$ and $\omega_\cc^{\text{log}}$ are line bundles, also known as orbifold  line bundles, defined over $\cc$.  An orbifold  line bundle has a well-defined degree which may be a half-integer since the points with non-trivial isotropy can contribute $1/2$ to the degree.  In particular, 
\[\deg\omega_{\cc}^{\text{log}}=2g-2+n,\qquad\deg\theta=g-1+\frac12n.\] 
The degree of $\theta^{\vee}$ is negative, $\deg\theta^{\vee}=1-g-\frac12n<0$, hence  $\theta^{\vee}$ possesses no holomorphic sections and $h^0(\cc,\theta^{\vee})=0$.   The index $h^0(\cc,\theta^{\vee})-h^1(\cc,\theta^{\vee})$ is constant over any family, thus $H^1(\cc,\theta^{\vee})$ has constant dimension and defines a vector bundle $E_{g,n}$.  More precisely, denote by $\ce$ the universal spin structure defined over the universal curve  $\cu_{g,n}^{\text{spin}}\stackrel{\pi}{\longrightarrow}\modm_{g,n}^{\text{spin}}$.  
\begin{definition}  \label{obsbun}
Define the bundle $E_{g,n}:=-R\pi_*\ce^\vee\hspace{-1mm}\to\modm_{g,n}^{\text{spin}}$ with fibre $H^1(\cc,\theta^{\vee})$.   
\end{definition}    

The behaviour of a spin structure at a marked point $p$ is one of two types, known as Neveu-Schwartz and Ramond which we now define.
The orbifold line bundles $\omega_{\cc}^{\text{log}}$ and $\theta$ over $\cc$ are locally equivariant bundles over the local charts.  On each fibre over a point $p$ with non-trivial isotropy the equivariant trivialisation defines a representation acting by multiplication by $(-1)^\sigma$ for $\sigma\in\{0,1\}$.   The representation associated to $\omega_{\cc}^{\text{log}}$ at $p_i$ is trivial since $\omega_{\cc}^{\text{log}}$ is generated locally by $dz/z$ and the local $\bz_2$ action $z\mapsto -z$ induces 
\[\frac{dz}{z}\stackrel{z\mapsto-z}{\longrightarrow}\frac{dz}{z}.\]  
Note that the representation associated to $\omega_{\cc}$ at $p_i$ is non-trivial, since $dz\mapsto-dz$, so in particular $\omega_{\cc}$ cannot have a square root, which justifies considering square roots of the log canonical bundle.
The representations associated to $\theta$ at each $p_i$ define a vector $\sigma\in\{0,1\}^n$.%, where $0$, respectively $1$, in $\{0,1\}$ corresponds to the unique trival, respectively non-trivial, representation $\bz_2\to\bz_2$.   
A marked point $p_i$ is known as a {\em Neveu-Schwarz} point when the associated representation is non-trivial, and a {\em Ramond} point otherwise.  The representations at marked points define a decomposition of the moduli space into connected components 
\begin{equation}  \label{modspin}
\modm_{g,n}^{\text{spin}}=\bigsqcup_{\sigma\in\{0,1\}^n}\modm_{g,\sigma}^{\text{spin}}.
\end{equation}
The parameters $\sigma\in\{0,1\}^n$ will also label the natural measures to be constructed on $\modm_{g,n}$ as pushforwards of a measure on $\modm_{g,n}^{\text{spin}}$ restricted to each component.  The restriction of the forgetful map which forgets the spin structure and the orbifold structure to a component is denoted by $p^\sigma:\modm_{g,\sigma}^{\text{spin}}\to\modm_{g,n}$.

For any sheaf $\cf$ over $\cc$, define the pushforward sheaf $|\cf|:=\rho_*\cf$ over the coarse curve $C=|\cc|$.  The sheaf of local sections $\co_\cc(L)$ of any line bundle $L$ on $\cc$ pushes forward to a sheaf $|L|:=\rho_*\co_\cc(L)$ on $C$ which can be identified with the local sections of $L$ invariant under the $\bz_2$ action.  The pullback of the pushforward is the bundle 
$\rho^*(|\theta|)=\theta\otimes\bigotimes_{i\in I}\co(-\sigma_ip_i)$,
since locally invariant sections must vanish when the representation is non-trivial.  In particular, $\deg|\theta|=\deg \theta-\tfrac12|\sigma|$ where $\deg\theta=g-1+\frac12n$ is independent of the representations $\sigma$.  Let $U\subset\cc$ be an open set with local coordinate $z$ that contains a single marked point $p_i$ satisfying $z(p_i)=0$, and suppose that $p_i$ is Neveu-Schwarz.  Then $\rho^*(|\theta|)\otimes\rho^*(|\theta|)$
%Under the map $U\to|U|$ that forgets the orbifold structure at $p_i$, 
 lies in the image of the natural map $\omega_\cc(U)\stackrel{\cdot z}{\longrightarrow}\omega_\cc(p_i)(U)$ since local sections of the pushforward sheaf and its square vanish.  In particular, this shows that one can forget a Neveu-Schwarz point $p_{n+1}$, for $\sigma'=(\sigma,1)\in\{0,1\}^{n+1}$ :
\begin{equation}  \label{forgetp}
\pi:\modm_{g,\sigma'}^{\text{spin}}\to\modm_{g,\sigma}^{\text{spin}}
\end{equation}
where  $\pi$ forgets the orbifold structure at $p_{n+1}$ and, as explained above, the square of the spin structure takes its values in the canonical sheaf around $p_{n+1}$ in place of the log-canonical sheaf.  The map $\pi$ and the map $p$ defined in \eqref{forgets} are both forgetful maps, forgetting different structures on the curve.  We also see that % $[\rho^*(|\theta|)]^2\to\omega_\cc(R)$, and the pushforward of a spin structure is a square root of a twisted canonical  sheaf:
\[ |\theta|^2\cong\omega_C(R)
\]
where $R\subset D$ consists of the Ramond points, which agrees with the convention in \cite{ChiTow}.

The rank of the bundle $E_{g,n}$ depends on the component $\modm_{g,\sigma}^{\rm spin}$ of the moduli space.  It can be calculated using the Riemann-Roch theorem applied to the pushforward of $\theta$ to the underlying coarse curve $C$, since $H^i(\cc,\theta^{\vee})=H^i(C,|\theta^{\vee}|)$.  Apply Riemann-Roch to the coarse curve $C$ to get:
\[h^0(C,|\theta^{\vee}|)-h^1(C,|\theta^{\vee}|)=1-g+\deg|\theta^{\vee}|=2-2g-\tfrac12(n+|\sigma|).\]
and $h^0(C,|\theta^{\vee}|)=h^0(\cc,\theta^{\vee})=0$, thus the rank of $E_{g,n}$ restricted to $\modm_{g,\sigma}^{\rm spin}$ is
\begin{equation}  \label{rank}
\text{rank\ }E_{g,n}|_{\modm_{g,\sigma}^{\rm spin}}=2g-2+\tfrac12(n+|\sigma|).
\end{equation}
A consequence of \eqref{rank} is that the number of Ramond points  on any curve is even.

\subsection{Euler form over the moduli space}   \label{sec:eulerform}

We construct a canonical Euler form $e(E_{g,n}^\vee)$ associated to the bundle $E_{g,n}\to\modm_{g,n}^{\text{spin}}$ using the Chern connection of a natural Hermitian metric.    The canonical Hermitian metric on $E_{g,n}^\vee$ is defined similarly to the Hermitian metric on the cotangent bundle of $\modm_{g,n}$, which produces the Weil-Petersson metric on $\modm_{g,n}$.  Both constructions use the natural hyperbolic metric associated to each curve of the moduli space.   For a twisted, pointed curve $(\cc,D)$, where $D=(p_1,\dots,p_n)$, via Serre duality 
\[ H^1(\cc,\theta^\vee) \cong H^0(\cc,\omega_{\cc}\otimes\theta)^\vee.\]
Elements of $H^0(\cc,\omega_{\cc}\otimes\theta)$ give the analogue of meromorphic quadratic differentials used to define the Weil-Petersson metric.  
\begin{definition}  \label{hermetricNSR}
Define a Hermitian metric on the vector space $H^0(\cc,\omega_{\cc}\otimes\theta)$ by
\begin{equation}  \label{eq:hermetricNSR}
\langle\eta,\xi\rangle:=\int_\cc\frac{\overline{\eta}\xi}{\sqrt{h}},\qquad \eta,\xi\in H^0(\cc,\omega_{\cc}\otimes\theta)
\end{equation}
where $h$ is the complete hyperbolic metric on $\cc-D$ in its conformal class. 
\end{definition}

\begin{lemma}   \label{welldef}
The Hermitian metric \eqref{eq:hermetricNSR} is well-defined.
\end{lemma}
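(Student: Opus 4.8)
The plan is to verify two properties, after which positive-definiteness is immediate: first, that the integrand $\overline{\eta}\xi/\sqrt{h}$ is a globally well-defined density on $\cc-D$, so that the integral is independent of local coordinates and of the local trivialisation of the spin structure $\theta$; and second, that this density is integrable near each marked point, so that the integral converges. Away from $D$ the hyperbolic metric $h$ is smooth and positive, so $1/\sqrt{h}$ is smooth there and only the behaviour at $D$ can obstruct convergence.

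For the first point, I would read $1/\sqrt{h}$ as the positive section of $(\omega_\cc\otimes\overline{\omega}_\cc)^{-1/2}$ determined by the hyperbolic metric, so that $\overline{\eta}\xi/\sqrt{h}$ is a section of
\[ (\omega_\cc\otimes\theta)\otimes\overline{(\omega_\cc\otimes\theta)}\otimes(\omega_\cc\otimes\overline{\omega}_\cc)^{-1/2}. \]
The key observation is that the isomorphism $\theta^2\cong\omega_\cc^{\text{log}}$, which away from $D$ reads $\theta^2\cong\omega_\cc$, cancels the half-integral powers and identifies the displayed bundle canonically with the density bundle $\omega_\cc\otimes\overline{\omega}_\cc$. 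Crucially, $\theta$ enters only through the combination $\theta\otimes\overline{\theta}=|\theta|^2$, so the branch ambiguity in writing $\theta=\omega_\cc^{1/2}$ disappears and the identification is independent of any trivialisation of the spin structure. This is exactly the local computation recorded in the introduction, where $\overline{\eta}\xi/\sqrt{h}$ becomes $\overline{f(z)}g(z)\,\mathrm{Im}(z)\,|dz|^2$; in transition-function terms the relation $|g_\theta|^2=|g_\omega|$ is what makes the local expression a genuine $(1,1)$-form.

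For convergence it suffices to work in a cusp neighbourhood of each $p_i$, since $\cc-D$ is otherwise compact and the integrand is smooth. In the upper half plane uniformisation $u=e^{2\pi i z}$, $z\in\bh$, the metric is $|dz|^2/(\mathrm{Im}\,z)^2$ and, writing $\eta=\phi(z)\,dz^{3/2}$ and $z=x+iy$, the integrand takes the form $|\phi(z)|^2\,\mathrm{Im}(z)\,dx\,dy$. The monodromy of $\theta$ around $p_i$ fixes the Fourier expansion of $\phi$: in the Neveu-Schwarz case $\phi$ is antiperiodic, with lowest mode $\propto e^{\pi i z}$; in the Ramond case $\phi$ is periodic, but the representation of $\omega_\cc\otimes\theta$ at $p_i$ is then nontrivial, forcing $\eta$ to vanish at the puncture and hence removing the constant mode, so the lowest mode is $\propto e^{2\pi i z}$. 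In either case $|\phi(z)|^2$ decays exponentially as $\mathrm{Im}\,z\to\infty$, dominating the linear factor $\mathrm{Im}(z)$, and the cusp integral converges.

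The main obstacle is the Ramond case. Unlike the Neveu-Schwarz case, where antiperiodicity alone forces decay, here one must compute the local representation of $\omega_\cc\otimes\theta$ as the product of the nontrivial representation of $\omega_\cc$ with the trivial representation of $\theta$, and then show that the resulting forced vanishing of $\eta$ at the Ramond point is precisely what is needed to beat the growth of $1/\sqrt{h}=\mathrm{Im}\,z$. Once finiteness is established, positivity and non-degeneracy of $\langle\,\cdot\,,\cdot\,\rangle$ follow at once, since the integrand $|\phi|^2\,\mathrm{Im}(z)\,dx\,dy$ is non-negative and vanishes identically only when $\eta\equiv 0$.
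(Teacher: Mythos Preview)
Your proof is correct and arrives at the same conclusion as the paper, but by a genuinely different route at the convergence step. For the first point---that $\overline{\eta}\xi/\sqrt{h}$ is a well-defined density---your bundle-theoretic argument is equivalent to the paper's transition-function computation (transitions $|z'(w)|^3$ for $\overline{\eta}\xi$, $|z'(w)|$ for $\sqrt{h}$, hence $|z'(w)|^2$ for the quotient). For convergence, however, the paper works in the disk model with orbifold coordinate $z$ centred at the puncture: it observes that $\overline{\eta}\xi=\frac{F(z)}{|z|}|dz|^3$ with $F$ smooth (the $1/|z|$ coming from $\theta^2\cong\omega_\cc^{\log}$) and that $\sqrt{h}=|dz|/(|z|\,|\log|z||)$, so the integrand reduces to $F(z)|\log|z||\,|dz|^2$, whose integral is bounded by $\int_0^\epsilon r|\log r|\,dr<\infty$. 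This single estimate covers Neveu--Schwarz and Ramond simultaneously, with no case analysis. Your Fourier-mode analysis on the upper half plane is sharper (it detects the faster Ramond decay) and closer in spirit to automorphic forms, but the extra precision is not needed for the lemma, and the case split you flag as the ``main obstacle'' is in fact avoidable. One small imprecision: in the Ramond case the absence of the constant Fourier mode in $\phi$ already follows from $\eta$ being a \emph{holomorphic} section of $\omega_\cc\otimes\theta$ on the orbifold (equivalently, from the factor $\omega_\cc$ rather than $\omega_\cc^{\log}$); the additional vanishing $\eta(p_i)=0$ forced by the nontrivial representation kills a further mode but is not what eliminates the constant term. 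This does not affect your conclusion.
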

\begin{proof}We need to show that the integrand in \eqref{eq:hermetricNSR} defines an integrable two-form.
Locally, the transition functions, equivalently the sheaf restriction maps, of $\overline{\eta}\xi$ are given by $|z'(w)|^3$ where $z$ and $w$ are local coordinates on $\cc$.  The transition functions for $\sqrt{h}$ are $|z'(w)|$ so the transition functions for $\overline{\eta}\xi/\sqrt{h}$
are $|z'(w)|^2$ proving that the quotient is a 2-form which can be integrated over $\cc$.  

To prove that \eqref{eq:hermetricNSR} is convergent, consider the local situation.  Near any marked point $p_i$, with respect to a local coordinate $z$ satisfying $z(p_i)=0$, the hyperbolic metric is given by 
\[h=\frac{|dz|^2}{|z|^2(\log|z|)^2}=\frac{|dx|^2}{|x|^2(\log|x|)^2},\quad x=z^2
\] 
which uses the local formula for a cuspidal hyperbolic metric on the coarse curve with local coordinate  $x$.
Since $\omega_{\cc}$ consists of holomorphic differentials and $\theta^2\cong\omega^{\text{log}}_{\cc}$ then locally 
\[ \overline{\eta}\xi=\frac{F(z)}{|z|}|dz|^3,\quad\eta,\xi\in H^0(\cc,\omega_{\cc}\otimes\theta)
\]
for $F(z)$ a smooth function, where the square of a local section of $\theta$ contributes a pole of order 1 in $z$ so that the Hermitian product of two sections produces the pole of order 1 in $|z|$, and local sections of $\omega_{\cc}$ are smooth at $z=0$.  Hence
\[\int_{|z|<\epsilon}\frac{\overline{\eta}\xi}{\sqrt{h}}=\int_{|z|<\epsilon}F(z)|\log|z|||dz|^2\sim M\int_0^{2\pi}\int_0^\epsilon r\log(r)drd\theta<\infty
\]
due to the bounded integrand.  We conclude that \eqref{eq:hermetricNSR} is well-defined.
\end{proof}

The pushforward of elements of $H^0(\cc,\omega_{\cc}\otimes\theta)$
from $\cc$ to the coarse curve $C=|\cc|$ has a closer comparison to the meromorphic quadratic differentials with simple poles used to define the Weil-Petersson metric.  There is a map $|\omega_\cc\otimes\theta| \to\omega_C^{3/2}(D)$, hence the pushforward sections take values in $H^0(C,\omega_C^{3/2}(D))$ which resembles $H^0(\cc,\omega_{\cc}^{\otimes 2}(D))$.    For $\eta,\xi\in H^0(C,\omega_C^{3/2}(D))$, near a cusp the $3/2$ differentials are given by $\eta=\frac{f(x)dx^{3/2}}{x}$ and $\xi=\frac{g(x)dx^{3/2}}{x}$ where $f(x)$ and $g(x)$ are holomorphic at $x=0$.  Hence the local contribution to the metric  \eqref{eq:hermetricNSR} is at worst of order
\begin{equation}  \label{locest}
\int_{|z|<\epsilon}\frac{|\log|x||}{|x|}|dx|^2=\int_0^\epsilon|\log r| drd\theta<\infty
\end{equation}
which reduces to the local contribution in the proof of Lemma~\ref{welldef} via the subsitution $x=z^2$ and again we see the convergence of \eqref{eq:hermetricNSR} .

\subsubsection{Chern form}
A holomorphic bundle $E\to M$ equipped with a Hermitian metric is naturally a real oriented bundle of even rank with a Riemannian metric.  The Hermitian metric induces the Chern connection, a unique natural Hermitian connection compatible with both the holomorphic structure and the Hermitian metric, which is a metric connection with respect to the underlying Riemannian metric on $E$.  Thus the Hermitian metric defined in \eqref{eq:hermetricNSR} defines the Chern connection on $E_{g,n}^\vee\to\modm_{g,n}^{\text{spin}}$.

Any real oriented bundle $E\to M$ of rank $N$ equipped with a Riemannian metric $\langle\cdot,\cdot\rangle$ and a metric connection $A$ defines  an Euler form 
\[e(E):=\left(\frac{1}{4\pi}\right)^N\text{pf}(F_A)\in\Omega^N(M)\] 
where $\text{pf}(F_A)$ is the Pfaffian ofhe curvature of the connection $F_A\in\Omega^2(M,\text{End}(E))$, \cite{OsbRep}.   
The Bianchi identity $\nabla^AF_A=0$ implies $e(E)$ is closed, hence it represents a cohomology class on $M$.  When $M$ is compact, the cohomology class of the Euler form is independent of the choice of metric and connection, and represents the {\em Euler class} of $E$.   

Apply this construction to $E_{g,n}^\vee$ equipped with its natural Chern connection to produce a canonical Euler form $e(E_{g,n}^\vee)$.  This also gives the Chern-Weil construction of the top Chern form $e(E_{g,n}^\vee)=c_{N/2}(E_{g,n}^\vee)$.

\subsection{Measures on the moduli space of curves}
We use $e(E_{g,n}^\vee)$ to define a measure on $\modm_{g,n}^{\rm spin}$ which pushes forward to define a measure on $\modm_{g,n}$.  In this paper a measure is given by a top degree differential form.  Although differential forms naturally pull back rather than push forward, the nowhere vanishing differential $(\omega^{WP})^D$ on $\modm_{g,n}$, for $D=3g-3+n$, allows one to replace any top degree differential form $\eta$ by a function $f_\eta$ defined via $\eta=f_\eta p^*(\omega^{WP})^D$.  Then $p_*\eta:=\bar{f}_\eta(\omega^{WP})^D$ for $\bar{f}(m):=\sum_{m'\in p^{-1}(m)} f(m')$ the natural pushforward of a function.   % Its restriction to each component $\modm_{g,\sigma}^{\rm spin}$ defines different pushforward measures as we now describe.
The Euler form $e(E_{g,n}^\vee)$ restricted to $\modm_{g,\sigma}^{\rm spin}$ is a differential form of degree $4g-4+n+|\sigma|$ by \eqref{rank},
\[ e(E_{g,n}^\vee)\in\Omega^N(\modm_{g,\sigma}^{\rm spin}),\qquad N=4g-4+n+|\sigma|.
\]
Combined with the Weil-Petersson form, it defines a measure $e(E_{g,n}^\vee)\exp(\omega^{WP})$ on $\modm_{g,\sigma}^{\rm spin}$ that pushes forward to a measure on $\modm_{g,n}$,
\[\mu_\sigma:=p^\sigma_*\big\{e(E_{g,n}^\vee)\exp(\omega^{WP})\big\}. \]

Via the forgetful map $\pi^{(m)}:\modm_{g,n+m}\to\modm_{g,n}$, define the formal sum of $S_n$ symmetric measures 
on $\modm_{g,n}$:
\[ \mu(s)=\sum_{m=0}^\infty\frac{s^m}{m!}\pi^{(m)}_*\mu_{(1^n,0^m)}
\]
where the measure $\mu_m$ is defined on $\modm_{g,n+m}$.
Then $\mu(s)$ has total measure 
\[
\widehat{V}^{WP}_{g,n}(s,0,...,0)=\int_{\modm_{g,n}}\mu(s).
%\widehat{V}^{WP}_{g,n}(s,L_1,...,L_n):=\sum_{m=0}^\infty\frac{s^m}{m!}\int_{\modm_{g,(1^n,0^m)}^{\rm spin}}\hspace{-6mm}e(E_{g,(1^n,0^m)})\exp[\omega(L_1,...,L_n,0^m)].
\]
which coincides with \eqref{supvolpol} evaluated at $(L_1,...,L_n)=(0,...,0)$.
\section{Moduli space of stable curves}   \label{stablecurves}
The measures $\mu_\sigma$ are defined over the moduli space of smooth curves which is non-compact.  A powerful tool to prove general properties of these measures, particularly properties of the total measures, is via a compactification of the moduli space.  This turns the total measure, given by integration over $\modm_{g,n}$, into an algebraic topological, or cohomological, invariant and and intersection number in algebraic geometry.  The compactifications used here are the moduli space of stable curves $\overline{\modm}_{g,n}$ and a corresponding compactification of the moduli space of spin curves $\overline{\modm}_{g,n}^{\text{spin}}$.  To obtain a relationship of the measure to cohomological invariants of the compactification and to calculate, we need the following.
\begin{enumerate}[label=(\roman*)]
\item Extend $E_{g,n}\to\modm^{\text{spin}}_{g,n}$ to a bundle $\bar{E}_{g,n}\to\overline{\modm}_{g,n}^{\text{spin}}$.\label{extb}
\item Prove the existence of an extension of $e(E_{g,n}^\vee)$ to $\overline{\modm}_{g,n}^{\text{spin}}$.\label{exte}
\item Organise the algebraic structure of intersection numbers via a cohomological field theory and an associated partition function.\label{coh}
\end{enumerate}
The extension \ref{extb} is proven in \cite{NorNew}, and \ref{exte} and \ref{coh} are proven in \cite{NorEnu}.  The papers \cite{NorEnu,NorNew} emphasise the Neveu-Schwartz component, where $\sigma=1^n$, so we will recall the constructions here for both Neveu-Schwartz and Ramond points.  The construction in \ref{exte} naturally relates the extension of $e(E_{g,n}^\vee)$ to the Euler (or Chern) {\em class} of $\bar{E}_{g,n}$ enabling us to deduce that the volumes are given via cohomological calculations, or intersection numbers, over $\overline{\modm}_{g,n}^{\text{spin}}$.

\subsection{Moduli space of stable spin curves}

A stable twisted curve, with group $\bz_2$, is a 1-dimensional orbifold, or stack, and underlying coarse curve $\cc\to|\cc|$ given by a stable curve.  The marked points $p_i\in\cc$ and nodes of $\cc$ have isotropy group $\bz_2$, while all other points have trivial isotropy group.   A line bundle $L$ over $\cc$ is a locally equivariant bundle over the local charts, such that at each nodal point there is an equivariant isomorphism of fibres.  The representations of $\bz_2$ on $L|_p$ agree on each local irreducible component at a node due to the equivariant isomorphisms at nodes.

\begin{definition}  \label{def:modspincomp}
The moduli space of stable spin curves is defined by
\[\overline{\modm}_{g,n}^{\text{spin}}=\{(\cc,\theta,p_1,...,p_n,\phi)\mid \phi:\theta^2\stackrel{\cong}{\longrightarrow}\omega_{\cc}^{\text{log}}\}
\]
where $\theta$ is a  line bundle over a stable, twisted curve $\cc$ with group $\bz_2$, each nodal point and marked point $p_i$ has isotropy group $\bz_2$, and all other points have trivial isotropy group.
\end{definition}
This gives a natural compactification of the moduli space of twisted, stable, spin curves $\modm_{g,n}^{\text{spin}}$.  We denote the extension of the forgetful map $p:\modm_{g,n}^{\text{spin}}\to\modm_{g,n}$, which forgets the spin and orbifold structure, with the same name: 
\[ p:\overline{\modm}_{g,n}^{\text{spin}}\to\overline{\modm}_{g,n}.
\]
The moduli space of stable spin curves decomposes as in \eqref{modspin} into components $\overline{\modm}_{g,\sigma}^{\text{spin}}\subset\overline{\modm}_{g,n}^{\text{spin}}$ for each $\sigma\in\{0,1\}^n$.

Denote by $\ce$ the universal spin structure on the universal curve $\overline{\cu}_{g,n}^{\text{spin}}\stackrel{\pi}{\longrightarrow}\overline{\modm}_{g,n}^{\text{spin}}$ using the same notation as its restriction to the moduli space of smooth spin curves.  
\begin{definition}  
Define the bundle $\bar{E}_{g,n}:=-R\pi_*\ce^\vee\hspace{-1mm}\to\overline{\modm}_{g,n}^{\text{spin}}$ with fibre $H^1(\cc,\theta^{\vee})$.   
\end{definition}    
This defines a bundle, i.e. the dimension of  $H^1(\cc,\theta^{\vee})$ is constant, due to the vanishing $H^0(\cc,\theta^{\vee})=0$.  The proof that $H^0(\cc,\theta^{\vee})=0$ when $\cc$ is smooth used the fact that $\deg\theta^{\vee}<0$.  When $\cc$ is nodal, the proof is subtler.  For any irreducible component $X\subset\cc$, $\omega_{\cc}^{\text{log}}|_X=\omega_X^{\text{log}}$, hence 
\[\deg\theta^{\vee}|_{X}=-\frac12\deg\omega_{\cc}^{\text{log}}|_X=-\frac12\deg\omega_X^{\text{log}}<0\] 
since the stability of $\cc$ implies $\deg\omega_X^{\text{log}}>0$ for any irreducible component.  We conclude that $H^0(\cc,\theta^{\vee})=0$ since it is trivial on each irreducible component. 

By construction, 
\[\bar{E}_{g,n}|_{\modm_{g,n}^{\text{spin}}}=E_{g,n}
\]
hence $\bar{E}_{g,n}$ defines an extension of $E_{g,n}$ to $\overline{\modm}_{g,n}^{\text{spin}}$.
The rank of $\bar{E}_{g,n}$ restricted to the component $\overline{\modm}_{g,\sigma}^{\text{spin}}$ is $2g-2+\tfrac12(n+|\sigma|)$ which is derived in \eqref{rank}.

\subsubsection{Extension of Euler class}

The following theorem states that the Euler form $e(E_{g,n}^\vee)$ extends to the compactification $\overline{\modm}_{g,n}^{\text{spin}}$.  On each component $\modm_{g,\sigma}^{\text{spin}}\subset\overline{\modm}_{g,\sigma}^{\text{spin}}$, and  defines a cohomology class in $H^{4g-4+n+|\sigma|}(\overline{\modm}_{g,\sigma}^{\text{spin}},\br)$.  This extension is a consequence of the fact that the Hermitian metric that defines $e(E_{g,n}^\vee)$ extends smoothly from $E_{g,n}^\vee$ to its extension $\bar{E}_{g,n}^\vee$ to $\overline{\modm}_{g,\sigma}^{\text{spin}}$.  This enables us to conclude that the cohomology class defined by the extension of $e(E_{g,n}^\vee)$ coincides with the Euler class of $\bar{E}_{g,n}^\vee$.

\begin{thm}[\cite{NorEnu}]  \label{eulerfc}
The extension of the Euler form $e(E_{g,n}^\vee)$ to $\overline{\modm}_{g,\sigma}^{\text{spin}}$ defines a cohomology class which coincides with the Euler class $e(\bar{E}_{g,n}^\vee)\in H^*(\overline{\modm}_{g,\sigma}^{\text{spin}},\br)$.
\end{thm}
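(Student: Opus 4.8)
The plan is to obtain the result from Chern--Weil theory once the analytic input is in place. Since $e(E_{g,n}^\vee)$ is already a smooth closed form on the open stratum $\modm_{g,\sigma}^{\text{spin}}$, the entire statement reduces to a local claim along each boundary divisor of $\overline{\modm}_{g,\sigma}^{\text{spin}}$: that the Chern (Euler) form extends smoothly across the divisor. Granting this, the extended form is a smooth closed $N$-form, $N=4g-4+n+|\sigma|$, on the compact orbifold $\overline{\modm}_{g,\sigma}^{\text{spin}}$, hence represents a de Rham class; and by the Chern--Weil isomorphism applied to the smooth Hermitian bundle $\bar E_{g,n}^\vee$ this class is independent of the chosen metric and connection and equals the Euler class $e(\bar E_{g,n}^\vee)$. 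The bundle extension $\bar E_{g,n}$ is available by hypothesis, so the task is purely to extend the metric.

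The analytic core is therefore to show that the metric \eqref{eq:hermetricNSR} extends smoothly to $\bar E_{g,n}^\vee$ over the boundary, i.e.\ that for local holomorphic frames $\eta,\xi$ of $\bar E_{g,n}^\vee$ the function $\langle\eta,\xi\rangle$ is smooth near the boundary. First I would fix a boundary point, corresponding to a stable twisted spin curve with a node, and introduce plumbing coordinates $zw=t$, parametrising the degenerating family by the pinching variable $t$, with $t=0$ the nodal curve. The fibre $H^0(\cc,\omega_\cc\otimes\theta)$ varies holomorphically and fits into $\bar E_{g,n}^\vee$; it remains to control the integral as $t\to 0$. Pinching a simple closed geodesic of length $\ell(t)\to 0$ produces a pair of cusps on the normalisation, and the collar lemma gives the precise collar form of the hyperbolic metric $h$ there. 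I would show that, in these coordinates, the integrand $\overline{\eta}\,\xi/\sqrt{h}$ is integrable with bounds uniform in $t$ and converges, as $t\to 0$, to the corresponding integrand on the normalised nodal curve carrying its cusp metric. The estimates in Lemma~\ref{welldef} and \eqref{locest} already treat the cusp model $h=|dx|^2/(|x|^2(\log|x|)^2)$; the node is handled by the same model on each branch, using that sections of $\omega_\cc\otimes\theta$ at a Neveu--Schwarz node vanish (exactly as in the marked-point analysis) while at a Ramond node the pole structure $|\theta|^2\cong\omega_C(R)$ supplies the compensating factor. Smoothness in $t$ (and the remaining moduli) then follows by differentiating under the integral sign, the derivatives being controlled by the same collar estimates; this is the analogue, for half-integer weight and in the presence of a spin structure, of the Masur--Wolpert analysis of the Weil--Petersson metric near the boundary.

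With the metric extended smoothly, the Chern connection, its curvature $F_A\in\Omega^2(\,\cdot\,,\mathrm{End}\,\bar E_{g,n}^\vee)$, and hence the Pfaffian $e(\bar E_{g,n}^\vee)=(1/4\pi)^N\,\mathrm{pf}(F_A)$ all extend smoothly across the boundary. The Bianchi identity gives closedness on the compact orbifold, so the extended form defines a class in $H^{N}(\overline{\modm}_{g,\sigma}^{\text{spin}},\br)$, and Chern--Weil identifies this class with $e(\bar E_{g,n}^\vee)$, completing the argument.

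The main obstacle is precisely the smooth extension of the metric across the boundary: the uniform control of the integral \eqref{eq:hermetricNSR} through the collar as the geodesic length tends to zero, together with the Neveu--Schwarz/Ramond dichotomy at the node. The half-integer weight $\sqrt{h}$ means the relevant local model is the $3/2$-differential computation of \eqref{locest} rather than the quadratic-differential one used for Weil--Petersson, and one must verify that \emph{smoothness}, not merely continuity, survives differentiation under the integral; this is where the delicate collar estimates carry the real weight of the proof.
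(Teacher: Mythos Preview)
Your proposal follows essentially the same route as the paper: extend the Hermitian metric \eqref{eq:hermetricNSR} smoothly to $\bar E_{g,n}^\vee$ over the boundary by analysing the pole structure of sections of $\omega_\cc\otimes\theta$ at nodes (with the Neveu--Schwarz/Ramond dichotomy playing the same role as at marked points), then invoke Chern--Weil on the compactification to identify the extended form with the Euler class. The paper's argument is in fact briefer than yours on the analytic details---it simply notes that the estimate \eqref{locest} applies verbatim at nodes and that the hyperbolic metric has a canonical form there---whereas you have correctly flagged the collar/plumbing analysis and differentiation under the integral as the places where rigour must be supplied.
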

\begin{proof}
We prove that the Hermitian metric \eqref{eq:hermetricNSR} on $E_{g,n}^\vee$ extends to a Hermitian metric on the bundle $\bar{E}_{g,n}^\vee\to\overline{\modm}_{g,n}^{\text{spin}}$ by analysing the behaviour of the poles of the $3/2$ differentials representing fibres of $E_{g,n}^\vee$.
Let $(\cc,D,\theta,\phi)\in\overline{\modm}_{g,n}^{\text{spin}}$ such that $\cc$ is a nodal curve.  The pullback of $\theta$ to the normalisation of $\cc$ is an orbifold bundle on each component.  In particular, points in the fibre of $\bar{E}_{g,n}^\vee$ given by elements of $H^0(\cc,\omega_{\cc}\otimes\theta)$ have the same simple pole behaviour at nodes and at marked points.  The pole at a node is present if the behaviour at the node is Neveu-Schwarz and removable if the behaviour at the node is Ramond.  Thus the estimate \eqref{locest} applies also at nodes to prove that the Hermitian metric on $H^0(\cc,\omega_{\cc}\otimes\theta)$ is well-defined when $\cc$ is nodal.  
The conclusion is that the Hermitian metric on $E_{g,n}^\vee$ extends to a Hermitian metric on $\bar{E}^\vee_{g,n}$.  Furthermore, it extends to a smooth Hermitian metric on $\bar{E}^\vee_{g,n}$ because the hyperbolic metric $h$ varies smoothly outside of nodes and has a canonical form around nodes, and the Hermitian metric is defined via an integral over $1/\sqrt{h}$ times smooth sections.  %Stated in another way, the Hermitian metric on $\bar{E}_{g,n}$ can be written locally as a $(2g-2+n)\times(2g-2+n)$ matrix $H$ defined via $H_{ij}=\langle s_i,s_j\rangle$ where $\{s_i\}$ is a local holomorphic basis for $\bar{E}_{g,n}$.  Derivatives of the entries $H_{ij}$ are given by derivatives of the integral \eqref{eq:hermetricNSR} where around nodes the metric and sections have a canonical local form.

We conclude that the Euler form $e(E_{g,n}^\vee)$, constructed from the curvature of the natural metric connection $A$, which is determined uniquely from the Hermitian metric and the holomorphic structure on $E_{g,n}^\vee$, extends to $\overline{\modm}_{g,n}^{\text{spin}}$.  The Euler class of $\bar{E}^\vee_{g,n}$ is determined by a choice of any metric connection on $\bar{E}^\vee_{g,n}$, in particular the Chern connection of the extension of the Hermitian metric on $\bar{E}^\vee_{g,n}$.  Thus the cohomology class defined by the extension of $e(E_{g,n}^\vee)$ coincides with the Euler class $e(\bar{E}_{g,n})\in H^*(\overline{\modm}_{g,n}^{\text{spin}},\br)$.
\end{proof}
\begin{remark}   
The Weil-Petersson Hermitian metric \eqref{WPmetric} on the (co)tangent bundle of $\modm_{g,n}$ does not extend to $\overline{\modm}_{g,n}$ since it blows up as a cusp forms in a family of hyperbolic metrics.  This is a consequence of the fact that a meromorphic quadratic differential $\eta\in H^0(|\cc|,\omega_{|\cc|}^{\otimes 2}(D))$ has simple poles at marked points and double poles at nodes. This contrasts with the behaviour of the Hermitian metric defined on $E_{g,n}^\vee$ which does extend to $\overline{\modm}_{g,n}^{\text{spin}}$ since elements of $H^0(\cc,\omega_{\cc}\otimes\theta)$ are holomorphic at marked points {\em and} nodes.  Equivalently, for a closer comparison, their pushforward to the coarse curve has simple poles at marked points and nodes.  %See \cite{NorEnu} for more details regarding comparison of the behaviour at nodes of quadratic differentials and $3/2$ differentials.
\end{remark}

A consequence is that the volume defined via a finite measure on $\modm_{g,\sigma}^{\text{spin}}$   can be calculated using intersection numbers over $\overline{\modm}_{g,\sigma}^{\text{spin}}$.   Define $L_{p_i}\to\overline{\modm}_{g,n}^{\text{spin}}$ to be the line bundle with fibre the cotangent space of a marked point on the twisted curve.  Due to the non-trivial isotropy of the marked points, its first Chern class $\tilde{\psi}_i:=c_1(L_{p_i})$ satisfies
\begin{equation}   \label{psispin}
\tilde{\psi}_{n+1}=\frac12p^*\psi_{n+1}.
\end{equation} 
Here $\psi_i:=c_1(\cl_{p_i})$ where $\cl_{p_i}\to\overline{\modm}_{g,n}$ is the line bundle with fibre the cotangent space of the marked point $p_i\in C$.  We also define $\kappa_1=\pi_*\psi_{n+1}^2\in H^2(\overline{\modm}_{g,n},\bq)$ and, using the same notation, $\kappa_1=p^*\kappa_1\in H^2(\overline{\modm}_{g,n}^{\text{spin}},\bq)$.

\begin{cor}  \label{volint}
\begin{equation}   \label{eq:volint}
\widehat{V}_{g,n}^{(m)}(L_1,...,L_n)=(-1)^{n-\frac12m}\epsilon_{g,\sigma}\int_{\overline{\modm}_{g,\sigma}^{\text{spin}}}c_{\text{top}}(\bar{E}_{g,\sigma})\exp\left\{2\pi^2\kappa_1+\sum_{i=1}^n L_i^2\tilde{\psi}_i\right\}
\end{equation}
where $\sigma=(1^n,0^m)$ and $c_{\text{top}}=c_{2g-2+n-\frac12m}$. 
\end{cor}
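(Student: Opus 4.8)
The plan is to convert the differential-geometric integral \eqref{nsdef} defining $\widehat{V}_{g,n}^{(m)}$ into a cohomological pairing over the compactification $\overline{\modm}_{g,\sigma}^{\text{spin}}$ and then to identify each factor with a characteristic class. Three ingredients combine: the smooth extension of the Euler form (Theorem~\ref{eulerfc}), the description by Wolpert and Mirzakhani of the cohomology class of the deformed Weil-Petersson form, and the relation \eqref{psispin} between the $\psi$-classes upstairs and downstairs. First I would rewrite the Euler-form factor in Chern-class language. By Theorem~\ref{eulerfc} the form $e(E_{g,\sigma}^\vee)$ extends to a smooth closed form on $\overline{\modm}_{g,\sigma}^{\text{spin}}$ representing $e(\bar E_{g,\sigma}^\vee)$, and since $\bar E_{g,\sigma}$ is a holomorphic bundle of complex rank $r=2g-2+n+\tfrac12 m$ (apply \eqref{rank} with $n+m$ marked points and $|\sigma|=n$), its Euler class obeys $e(\bar E_{g,\sigma}^\vee)=(-1)^r c_{\text{top}}(\bar E_{g,\sigma})$. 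Because the spin parity condition forces $m$ to be even, $(-1)^r=(-1)^{n+m/2}=(-1)^{n-m/2}$, which is exactly the sign in \eqref{eq:volint}.

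Next I would treat the Weil-Petersson factor $\exp\omega(L_1,\dots,L_n,0^m)$ cohomologically. On $\overline{\modm}_{g,n+m}$ the cohomology class of Mirzakhani's deformation is
\[ [\omega(L_1,\dots,L_n,0^m)]=2\pi^2\kappa_1+\tfrac12\sum_{i=1}^n L_i^2\psi_i, \]
the $\kappa_1$-term being Wolpert's computation of $[\omega^{WP}]$ and the $\psi_i$-terms Mirzakhani's; the Ramond entries contribute nothing since their boundary lengths are set to $0$. Pulling back along $p:\overline{\modm}_{g,\sigma}^{\text{spin}}\to\overline{\modm}_{g,n+m}$, using $\kappa_1=p^*\kappa_1$ and $p^*\psi_i=2\tilde\psi_i$ from \eqref{psispin}, each term $\tfrac12 L_i^2\psi_i$ becomes $L_i^2\tilde\psi_i$, so the class is $2\pi^2\kappa_1+\sum_{i=1}^n L_i^2\tilde\psi_i$, precisely the exponent in \eqref{eq:volint}.

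The core analytic point, which I expect to be the main obstacle, is to justify that the integral of the smooth finite measure over the non-compact $\modm_{g,\sigma}^{\text{spin}}$ equals the cohomological pairing over the compactification. The difficulty is that, in contrast to the Euler form, $\omega(L)$ does not extend smoothly across the boundary (as recalled in the Remark following Theorem~\ref{eulerfc}), so one cannot simply wedge smooth closed forms. I would represent the class $2\pi^2\kappa_1+\sum_i L_i^2\tilde\psi_i$ by a genuine smooth closed form $\Omega$ on $\overline{\modm}_{g,\sigma}^{\text{spin}}$; since $\omega(L)$ and $\Omega$ are cohomologous on the open stratum one has $\exp\omega(L)-\exp\Omega=d\beta$ there, whence
\[ \int_{\modm_{g,\sigma}^{\text{spin}}} e(E_{g,\sigma}^\vee)\wedge\big(\exp\omega(L)-\exp\Omega\big)=\int_{\modm_{g,\sigma}^{\text{spin}}} d\big(\pm\, e(E_{g,\sigma}^\vee)\wedge\beta\big) \]
using that the extended $e(E_{g,\sigma}^\vee)$ is closed. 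The hard part is showing this boundary integral vanishes: here I would use that $e(E_{g,\sigma}^\vee)$ extends smoothly across the boundary divisor (Theorem~\ref{eulerfc}), that the boundary has real codimension two, and that the singularities of $\omega(L)$ are mild enough---being tamed by the smooth Euler factor, exactly as in the integrability estimate \eqref{locest}---to leave no residue. This is the spin-refinement of Mirzakhani's theorem that Weil-Petersson volumes are computed by intersection numbers, the new input being the smooth extension of the Euler form rather than of $\omega(L)$.

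Assembling the three ingredients and restoring the normalisation constant $\epsilon_{g,\sigma}$ from \eqref{nsdef} yields \eqref{eq:volint}. As a byproduct, the right-hand side is manifestly polynomial in the $L_i^2$: only the part of $\exp(2\pi^2\kappa_1+\sum_i L_i^2\tilde\psi_i)$ of complex degree $\dim\overline{\modm}_{g,\sigma}^{\text{spin}}-r=g-1+\tfrac12 m$ survives, so the total degree in $(L_1,\dots,L_n)$ is at most $2g-2+m$, recovering the polynomiality and degree bound asserted in the introduction.
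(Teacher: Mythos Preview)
Your overall structure is right, and your identification of the sign $(-1)^{n-m/2}=(-1)^r$ and of the pullback $p^*(\tfrac12 L_i^2\psi_i)=L_i^2\tilde\psi_i$ is exactly what the paper does. The difference lies in the step you flag as ``the main obstacle'': you believe $\omega(L)$ does not extend across the boundary, and therefore set up a Stokes-type argument comparing it with a smooth representative $\Omega$.

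This comes from a misreading of the Remark following Theorem~\ref{eulerfc}. That remark says the Weil-Petersson \emph{Hermitian metric} on the cotangent bundle---the Petersson pairing on quadratic differentials---does not extend. It does \emph{not} say the symplectic form fails to extend. In fact the paper's proof invokes Wolpert \cite{WolHom} for precisely the opposite: $\omega(L_1,\dots,L_n,0^m)$ extends to a closed $2$-form $\widehat\omega$ on $\overline{\modm}_{g,n+m}$, and Mirzakhani \cite{MirWei} identifies $[\widehat\omega]=2\pi^2\kappa_1+\tfrac12\sum L_i^2\psi_i$. With both the Euler form and $\widehat\omega$ extending to the compactification, the integral over the open stratum equals the integral over the compact space (the boundary has measure zero), and the latter is the cohomological pairing. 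Your Stokes/residue analysis---which, as you note, you have not actually carried out---is unnecessary.

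So your argument is correct in spirit but carries an extra burden that the paper avoids; once you use Wolpert's extension of the symplectic form the proof collapses to the short one in the paper.
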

\begin{proof}
By \eqref{nsdef},
$\widehat{V}_{g,n}^{(m)}(L_1,...,L_n):=\epsilon_{g,\sigma}\int_{\modm_{g,\sigma}^{\rm spin}}e(E^\vee_{g,\sigma})\exp\omega(L_1,...,L_n,0^m)$ for $\sigma=(1^n,0^m)$.  
The  deformation $\omega(L_1,...,L_n,0^m)$  extends to a 2-form $\widehat{\omega}(L_1,...,L_n,0^m)$ on the moduli space of stable curves by an argument of Wolpert \cite{WolHom}.    
It is proven by Mirzakhani \cite{MirWei} via a general property of symplectic reductions, that %the cohomology class represented by its extension is
\[\big[\widehat{\omega}(L_1,...,L_n,0^m)\big]=2\pi^2\kappa_1+\frac12\sum_{i=1}^n L_i^2\psi_i\in H^2(\overline{\modm}_{g,n},\br).
\]  
This class pulls back under $p$ to $2\pi^2\kappa_1+\sum_{i=1}^n L_i^2\tilde{\psi}_i\in H^2(\overline{\modm}_{g,\sigma}^{\text{spin}},\br)$ since $\kappa_1$ is defined over $\overline{\modm}_{g,n}^{\text{spin}}$ via pullback.  The product of the extension of $e(E_{g,\sigma})$ to $\overline{\modm}_{g,\sigma}^{\text{spin}}$  with $\exp\{2\pi^2\kappa_1+\sum_{i=1}^n L_i^2\tilde{\psi}_i\}$ has the same integral as its restriction to $\modm_{g,\sigma}^{\text{spin}}$.  The factor $(-1)^{n-\frac12m}=(-1)^{\text{rank}(E_{g,\sigma})}$ is due to the presence of the dual bundle.
\end{proof}
From \eqref{volint} we see that the volume is a polynomial in $L_i$.  The degree of the polynomial is complementary to $c_{\text{top}}(\bar{E}_{g,\sigma})\in H^{4g-4+2n-m}$,
hence
\[\deg V_{g,n}^{(m)}(L_1,...,L_n)=2g-2+m.
\]

\begin{remark}
The symplectic form $\omega(L_1,...,L_n)$ can be naturally defined over the moduli space $\modm_{g,n}(L_1,...,L_n)\cong\modm_{g,n}$ 
of hyperbolic surfaces with $n$ geodesic boundary components of lengths $L_1,...,L_n$ so that $\omega(0,...,0)=\omega^{WP}$.  The diffeomorphism $\displaystyle f:\modm_{g,n}(0^{n})\overset{\cong}{\longrightarrow}
\modm_{g,\sigma}(L_1,...,L_n)$ allows one to work with deformations of a symplectic form over a single space, as we do here. We work only over $\modm_{g,\sigma}(0^n)$, on which the Euler form is defined, and deform the symplectic form.  %For $\sigma\in\{0,1\}^n$, the Euler form $e(E_{g,\sigma})$ is defined over $\modm_{g,\sigma}^{\text{spin}}(L_1,...,L_n)$ via the diffeomorphism $f$ lifted to the moduli space of spin curves.  
It would be desirable to have a definition of the Euler form directly over $\modm_{g,\sigma}^{\text{spin}}(L_1,...,L_n)$, as achieved by Wolpert \cite{WolHom} for the Weil-Petersson symplectic form.
\end{remark}

\subsubsection{Spin cohomological field theory}  \label{spincohft}
To calculate intersection numbers it is useful to use the structure of a cohomological field theory and more generally any collection of cohomology classes with well-behaved restriction properties to the strata:
\[
\phi_{\text{irr}}:\overline{\modm}_{g-1,n+2}\to\overline{\modm}_{g,n},\quad \phi_{h,I}:\overline{\modm}_{h,|I|+1}\times\overline{\modm}_{g-h,|I^c|+1}\to\overline{\modm}_{g,n},\  I\subset\{1,...,n\}.
\]
We also write
\[ D_I:=\overline{\modm}_{0,|I|+1}\times\overline{\modm}_{g,|I^c|+1}\to\overline{\modm}_{g,n}.
\]
A {\em cohomological field theory} (CohFT) is a pair $(H,\eta)$ which consists of a finite-dimension\-al complex vector space $H$ equipped with a non-degenerate symmetric bilinear form $\eta$ and a sequence of $S_n$-equivariant maps: 
\[ \Omega_{g,n}:H^{\otimes n}\to H^*(\overline{\modm}_{g,n},\bc)\]
that satisfy compatibility conditions from inclusion of strata:
%\[ \overline{\modm}_{g_1,n_1+1}\times\overline{\modm}_{g_2,n_2+1}\to\overline{\modm}_{g_1+g_2,n_1+n_2}\]
%$\displaystyle I_{g,n}(\alpha_1\otimes\cdots\otimes\alpha_n)=I_{g_1,n_1+1}\otimes I_{g_2,n_2+1}\big(\bigotimes_{j\in S_1}\alpha_j\otimes\Delta\otimes \bigotimes_{j\in S_2}\big)$ where $\Delta\in V\otimes V$ is dual to $\eta$.  
%then for $\displaystyle\Delta=\sum_{\alpha,\beta}\eta^{\alpha\beta}e_{\alpha}\otimes e_{\beta}$
\begin{equation}\label{glue}
\begin{split}
\phi_{\text{irr}}^*\Omega_{g,n}(v_1\otimes...\otimes v_n)&=\Omega_{g-1,n+2}(v_1\otimes...\otimes v_n\otimes\Delta) \\
\phi_{h,I}^*\Omega_{g,n}(v_1\otimes...\otimes v_n)&=\Omega_{h,|I|+1}\otimes \Omega_{g-h,|I^c|+1}\big(\bigotimes_{i\in I}v_i\otimes\Delta\otimes\bigotimes_{j\in I^c}v_j\big)
\end{split}
\end{equation}
where $\Delta\in H\otimes H$ is dual to $\eta\in H^*\otimes H^*$. There exists a vector $\un\in H$ satisfying 
\begin{equation} \label{unmet}
\Omega_{0,3}(v_1\otimes v_2\otimes \un)=\eta(v_1,v_2).
\end{equation}
%When $n=0$, $\Omega_g:=\Omega_{g,0}\in H^*(\overline{\modm}_{g},\bc)$.   A CohFT defines a product $\cdot$ on $V$ using the non-degeneracy of $\eta$ by \begin{equation}  \label{prod}  \eta(a\cdot b,c)=\Omega_{0,3}(a,b,c). \end{equation} and $\un$ is a unit for the product.

Using $\overline{\modm}_{g,n}^{\text{spin}}$, we define an example of a CohFT.
Identify the set $\{0,1\}$ with the set of distinct vectors $\{e_0,e_1\}\subset H\cong\bc^2$ which induces $\{0,1\}^n\to H^{\otimes n}$ where $\sigma\mapsto e_\sigma=e_{\sigma_1}\otimes...\otimes e_{\sigma_n}$ and define $\eta$ by $\eta(e_i,e_j)=\frac12\delta_{ij}$.  
Define
\[\Omega^{\text{spin}}_{g,n}(e_{\sigma_1}\otimes...\otimes e_{\sigma_n}):=p_*c(\bar{E}_{g,\sigma})\in H^{*}(\overline{\modm}_{g,n},\bq)\]
where recall that $p:\overline{\modm}_{g,n}^{\rm spin}\to\overline{\modm}_{g,n}$ forgets the spin and orbifold structures.
These classes, known as Chiodo classes \cite{ChiTow}, form a CohFT on $(V,\eta)$.  This is proven in \cite{LPSZChi} using an expression for $p_*c(\bar{E}_{g,\sigma}^\vee)$ in terms of sums over stable graphs proven in \cite{JPPZDou}.  It is proven directly in \cite{NorNew} via pullback properties of the bundles $\bar{E}_{g,\sigma}$.

For $e_{\sigma}=e_{\sigma_1}\otimes ...\otimes e_{\sigma_n}$, define
\begin{equation}   \label{Omegadef}
\Omega_{g,n}(e_{\sigma}):=\epsilon_{g,\sigma}p_*c_{\text{top}}(\bar{E}_{g,\sigma}^\vee)%\Omega^{\text{spin}}_{g,n}(e_{\sigma})^{\text{[top]}}
\end{equation}
where $c_{\text{top}}=c_{2g-2+\frac12(n+|\sigma|)}$ and $\epsilon_{g,\sigma}=2^{g-1+\frac12(n+|\sigma|)}$.  These classes satisfy \eqref{glue} for $\Delta=e_1\otimes e_1$.  They produce a degenerate bilinear form, so they do not define precisely a CohFT.
The pushforward of the classes $\Omega_{g,n}(e_{\sigma})\in H^{*}(\overline{\modm}_{g,n},\bq)$ under the map that forgets the Ramond points is used in \cite{CGGRel} to produce relations among polynomials in the classes $\kappa_m\in H^{*}(\overline{\modm}_{g,n},\bq)$ conjectured in \cite{KNoPol}.
    
The forgetful map defined in \eqref{forgetp}  extends to $\pi:\overline{\modm}_{g,\sigma'}^{\text{spin}}\to\overline{\modm}_{g,\sigma}^{\text{spin}}$ for $\sigma'=(\sigma,1)\in\{0,1\}^{n+1}$.
The following exact sequence is proven in \cite{NorNew}:
 \[0\to\xi\to\bar{E}_{g,\sigma'}\to\pi^*\bar{E}_{g,\sigma}\to 0
\]
where
\[c(\xi)=\frac{1}{c(L_{p_{n+1}})}=1-\tilde{\psi}_{n+1}.
\]
Push forward by $p$ the relation $c(\bar{E}_{g,\sigma'})=c(\xi)\pi^*c(\bar{E}_{g,\sigma})$ to get
\begin{equation} \label{forgbund}
\Omega^{\text{spin}}_{g,n+1}(e_1,v_1,...,v_n)=(1-\tfrac12\psi_{n+1})\pi^*\Omega^{\text{spin}}_{g,n}(v_1,...,v_n)
\end{equation}
which uses $\tilde{\psi}_{n+1}=\frac12p^*\psi_{n+1}$.
The factor $\epsilon_{g,\sigma}$ in \eqref{Omegadef} and $-1$ for the dual bundle together with \eqref{forgbund}  yields:
\begin{equation} \label{forgbund1}
\Omega_{g,n+1}(e_1,v_1,...,v_n)=\psi_{n+1}\pi^*\Omega_{g,n}(v_1,...,v_n).
\end{equation}

The expression for $\widehat{V}_{g,n}^{(m)}(L_1,...,L_n)$ as an intersection number on $\overline{\modm}_{g,\sigma}^{\text{spin}}$ for $\sigma=(1^n,0^m)$ given in \eqref{eq:volint}, now becomes an intersection number over $\overline{\modm}_{g,n+m}$.
\[
\widehat{V}_{g,n}^{(m)}(L_1,...,L_n)=\int_{\overline{\modm}_{g,n+m}}\hspace{-5mm}\Omega_{g,n+m}(e_1^{\otimes n}\otimes e_0^{\otimes m})\exp\left\{2\pi^2\kappa_1+\frac12\sum_{i=1}^n L_i^2\psi_i\right\}
\]
and
$$\widehat{V}_{g,n}\WP(s,L_1,...,L_n)=\sum_{m=0}^\infty\frac{s^m}{m!}\widehat{V}^{(m)}_{g,n}(L_1,...,L_n)
$$
is a generating function for intersection numbers.
The pullback relation \eqref{forgbund1} can be encoded in $\widehat{V}_{g,n}\WP$ by:
\begin{equation}  \label{dilaton}
 \widehat{V}_{g,n+1}\WP(s,L_1,...,L_n,2\pi i)=\left(2g-2+n+s\frac{\partial}{\partial s}\right)\widehat{V}_{g,n}\WP(s,L_1,...,L_n)
\end{equation}
by generalising an argument in \cite{DNoWei}.

The symmetry of the intersection numbers involving $\Omega_{g,n+m}(e_1^{\otimes n}\otimes e_0^{\otimes m})$ allows them to be naturally expressed using the following tau notation:
\[\langle\tau_{k_1}...\tau_{k_n}\nu^m\rangle_g:=\int_{\overline{\modm}_{g,n+m}}\hspace{-5mm}\Omega_{g,n+m}(e_1^{\otimes n}\otimes e_0^{\otimes m})\exp\left\{2\pi^2\kappa_1\right\}\prod_{i=1}^n\psi_i^{k_i}.
\]
They can be stored in a partition $\cz(\hbar,s,\vec{t})$ defined via:
\begin{equation}  \label{Zpart}
\cz(\hbar,s,\vec{t})=\exp\sum_{g,n,m}\frac{\hbar^{g-1}s^m}{n!\ m!}\sum_{\vec{k}\in\bn^n}\langle\tau_{k_1}...\tau_{k_n}\nu^m\rangle_g\prod_{i=1}^nt_{k_i}.
\end{equation}
Equivalently
\[ \left.\frac{\partial^n}{\partial t_{k_1}...\partial t_{k_n}}\log \cz(\hbar,s,\vec{t})\right|_{\vec{t}=0}=\sum_{g,n,m}\hbar^{g-1}\frac{s^m}{m!}\sum_{\vec{k}\in\bn^n}\langle\tau_{k_1}...\tau_{k_n}\nu^m\rangle_g.\]
It is related to the volumes via
\begin{equation}  \label{Zvol}
\cz(\hbar,s,\vec{t})=\exp\sum_{g,n}\frac{\hbar^{g-1}}{n!}\widehat{V}_{g,n}(s,L_1,...,L_n)|_{\{L_i^{2k}=2^kk!t_k\}}.
\end{equation}

Construct a partition function involving simpler intersection numbers as follows.
Remove the $\exp\left\{2\pi^2\kappa_1\right\}$ term in the definition of $\langle\tau_{k_1}...\tau_{k_n}\nu^m\rangle_g$ above to define 
\[\langle\overline{\tau}_{k_1}...\overline{\tau}_{k_n}\nu^m\rangle_g:=\int_{\overline{\modm}_{g,n+m}}\hspace{-5mm}\Omega_{g,n+m}(e_1^{\otimes n}\otimes e_0^{\otimes m})\prod_{i=1}^n\psi_i^{k_i}
\]
and its associated partition function
\[\bar{\cz}(\hbar,s,\vec{t})=\exp\sum_{g,n,m}\frac{\hbar^{g-1}s^m}{n!\ m!}\sum_{\vec{k}\in\bn^n}\langle\overline{\tau}_{k_1}...\overline{\tau}_{k_n}\nu^m\rangle_g\prod_{i=1}^nt_{k_i}.
\]

The following theorem relates the partition functions $\cz(\hbar,s,\vec{t})$ and $\bar{\cz}(\hbar,s,\vec{t})$ by translation of variables.  This is analogous to a result of Manin and Zograf \cite{MZoInv} for intersection numbers of $\psi$ classes and $\kappa$ classes over $\overline{\modm}_{g,n}$.  The $s=0$ case was proven in \cite{NorEnu}.
\begin{thm} \label{th:transl}
\begin{equation}  \label{transl}
\cz(\hbar,s,\vec{t})=\bar{\cz}(\hbar,s,t_0,t_1+2\pi^2,t_2-2\pi^4,...,t_k+\frac{(-1)^{k+1}}{k!}(2\pi^2)^k,...)
\end{equation} 
%\[Z(\hbar,s,u,\vec{t})=Z(\hbar,s,0,t_0,t_1+u,t_2-\frac{1}{2!}u^2,...,t_k-\frac{(-1)^k}{k!}u^k,...)\]
\end{thm}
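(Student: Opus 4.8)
The plan is to observe that the two partition functions $\cz$ and $\bar{\cz}$ are built from identical data except for the factor $\exp\{2\pi^2\kappa_1\}$ which appears inside the brackets $\langle\tau_{k_1}\cdots\tau_{k_n}\nu^m\rangle_g$ but is absent from $\langle\overline{\tau}_{k_1}\cdots\overline{\tau}_{k_n}\nu^m\rangle_g$. Thus \eqref{transl} is precisely the assertion that inserting $\exp\{2\pi^2\kappa_1\}$ can be traded for a translation of the times $t_k$. First I would record the conversion between the class $\kappa_1$ and $\psi$-classes at forgotten marked points, the only new input being the CohFT pullback relation \eqref{forgbund1}, namely $\Omega_{g,n+1}(e_1\otimes\vec v)=\psi_{n+1}\pi^*\Omega_{g,n}(\vec v)$, which holds for arbitrary spectator vectors $\vec v$, in particular in the presence of Ramond insertions $e_0$.

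Next I would exploit \eqref{forgbund1} together with the projection formula. Writing $\pi$ for the map that forgets one auxiliary Neveu-Schwarz point carrying $e_1$ and a factor $\psi^{k}$, one has $\pi_*\big(\psi^{k}\,\Omega_{g,N+1}(e_1\otimes\cdots)\big)=\pi_*\big(\psi^{k+1}\pi^*\Omega_{g,N}(\cdots)\big)=\kappa_k\,\Omega_{g,N}(\cdots)$, using $\pi_*\psi^{k+1}=\kappa_k$. Hence adding and then forgetting such a point inserts a factor $\kappa_k$. To manufacture $\exp\{2\pi^2\kappa_1\}$ one therefore introduces a gas of forgotten $e_1$-points, each weighted by a series $f(\psi)=\sum_{k\ge 1}\gamma_k\psi^{k}$; because iterated forgetful maps do not commute with the $\kappa$-classes (the comparison $\psi_i=\pi^*\psi_i+D_i$ produces correction terms), the resulting class is a priori a nontrivial polynomial in all $\kappa_j$, and pinning down the weights that leave only $\kappa_1$ is governed by the Arbarello--Cornalba / Kaufmann--Manin--Zagier tautological-pushforward combinatorics. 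The generating-function inversion of that combinatorics yields $\sum_{k\ge 1}\gamma_k z^{k}=1-e^{-2\pi^2 z}$, equivalently $\gamma_k=\tfrac{(-1)^{k+1}}{k!}(2\pi^2)^k$, which is exactly the shift of $t_k$ recorded in \eqref{transl} (with $t_0$ unshifted since $\gamma_0=0$).

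Finally I would reduce to the already-known $s=0$ case. The forgetful maps invoked above forget only the auxiliary $e_1$-points, so the $e_0$ Ramond insertions --- and with them the power of $s$ counted by $s^m/m!$ in \eqref{Zpart} --- are untouched spectators throughout the argument. Consequently the translation of $\vec t$ is independent of $s$, and the identity for general $s$ follows from the $s=0$ statement proven in \cite{NorEnu}, which is itself the analogue of the Manin--Zograf result \cite{MZoInv}. I expect the main obstacle to be establishing the exponential, rather than linear, form of the shift: one must control precisely the correction terms $D_i$ in $\psi_i=\pi^*\psi_i+D_i$ under the iterated forgetful maps in order to verify that the weights $\gamma_k=\tfrac{(-1)^{k+1}}{k!}(2\pi^2)^k$ kill every $\kappa_j$ with $j\ge 2$ while producing exactly $\exp\{2\pi^2\kappa_1\}$. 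The spectator argument for the Ramond points, by contrast, is routine once \eqref{forgbund1} is in hand.
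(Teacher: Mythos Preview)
Your proposal is correct and follows essentially the same route as the paper. Both arguments rest on the pushforward relation \eqref{forgbund1}, which yields $\pi_*(\psi_{n+1}^m\Omega_{g,n+1}(\sigma,1))=\kappa_m\Omega_{g,n}(\sigma)$, and then invoke the Kaufmann--Manin--Zagier formula to convert powers of $\kappa_1$ into sums over ordered partitions of $\psi$-classes at forgotten points; the built-in factor $\psi_{n+1}\cdots\psi_{n+\ell}$ inside $\Omega_{g,n+\ell}(\sigma,1^\ell)$ is exactly what kills the boundary corrections $D_i$ you flag as the obstacle, allowing $\prod_j\psi_j^{k_j}$ to be replaced by its pullback. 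The only cosmetic difference is that the paper carries out the coefficient comparison directly for each fixed power $s^m$ (noting that translation does not mix powers of $s$), whereas you phrase this as the Ramond insertions being spectators and reduce to the $s=0$ case of \cite{NorEnu}; these are the same observation.
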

\begin{proof}
The proof of the translation formula \cite{MZoInv} by Manin and Zograf uses the pushforward relation $\pi_*\psi_{n+1}^m=\kappa_{m-1}$.  The following key pushforward relation follows from \eqref{forgbund1}:
$$\pi_*(\psi_{n+1}^m\Omega_{g,n+1}(\sigma,1))=\pi_*(\psi_{n+1}^{m+1}\cdot\pi^*\Omega_{g,n}(\sigma))=\kappa_m\Omega_{g,n}(\sigma).
$$
It allows us to adapt much of the proof of Manin and Zograf.  The main tool used by Manin and Zograf is the following pushforward relation for $\kappa_1^N$ from \cite{KMZHig}  involving a sum over ordered partitions of $N$:
\begin{equation}  \label{KMZ}
\frac{\kappa_1^N}{N!}=\pi_*\left(\sum_{\mu\vdash N}\frac{(-1)^{N+\ell(\mu)}}{\ell(\mu)!}\prod_{j=n+1}^{n+\ell(\mu)}\frac{\psi_j^{\mu_j+1}}{\mu_j!}\right)
\end{equation} 
where $\mu\vdash N$ is an ordered partition of $N$ of length $\ell(\mu)$ and $\pi_*:\overline{\modm}_{g,\ell(\mu)}\to\overline{\modm}_{g}$ is the Gysin homomorphism induced by the map that forgets the marked  points.  From \eqref{KMZ}  we also have
\[
\frac{\kappa_1^N}{N!}\prod_{j=1}^n\psi_j^{k_j}=\pi_*\left(\sum_{\mu\vdash N}\frac{(-1)^{N+\ell(\mu)}}{\ell(\mu)!}\prod_{j=n+1}^{n+\ell(\mu)}\frac{\psi_j^{\mu_j+1}}{\mu_j!}\prod_{j=1}^n\psi_j^{k_j}\right)
\]
where $\pi_*:\overline{\modm}_{g,n+\ell(\mu)}\to\overline{\modm}_{g,n}$ forgets the last $\ell(\mu)$ points, since the factor $\displaystyle\prod_{j=1}^n\psi_j^{k_j}$ can be replaced by its pullback  via 
\[\prod_{j={n+1}}^{n+\ell(\mu)}\psi_j\cdot\prod_{j=1}^n\psi_j^{k_j}=\prod_{j={n+1}}^{n+\ell(\mu)}\psi_j\cdot\pi^*\prod_{j=1}^n\psi_j^{k_j}\]
and then brought outside of the pushforward.
When $\Omega_{g,n}(\sigma)$ is present, we have
\[
\Omega_{g,n}(\sigma)\frac{\kappa_1^{N}}{N!}=\pi_*\left(\Omega_{g,n+\ell(\mu)}(\sigma,1^{\ell(\mu)})\sum_{\mu\vdash N}\frac{(-1)^{N+\ell(\mu)}}{\ell(\mu)!}\prod_{j=n+1}^{n+\ell(\mu)}\frac{\psi_j^{\mu_j}}{\mu_j!}\right)
\]
since $\Omega_{g,n+\ell}(\sigma,1^\ell)=\psi_{n+1}...\psi_{n+\ell}\pi^*\Omega_{g,n}(\sigma)$ so that $\psi_j^{\mu_j+1}$ in the right hand side of \eqref{KMZ} is replaced by $\psi_j^{\mu_j}$.  The product of $\psi$ classes inside $\Omega_{g,n+\ell(\mu)}(\sigma,1^{\ell(\mu)})$ again allows us to replace $\prod_{j=1}^n\psi_j^{k_j}$ by $\pi^*\prod_{j=1}^n\psi_j^{k_j}$  to deduce:
\[
\Omega_{g,n}(\sigma)\frac{\kappa_1^{N}}{N!}\prod_{j=1}^n\psi_j^{k_j}=\pi_*\left(\Omega_{g,n+\ell(\mu)}(\sigma,1^{\ell(\mu)})\sum_{\mu\vdash N}\frac{(-1)^{N+\ell(\mu)}}{\ell(\mu)!}\prod_{j=n+1}^{n+\ell(\mu)}\frac{\psi_j^{\mu_j}}{\mu_j!}\prod_{j=1}^n\psi_j^{k_j}\right).
\]
Hence
\[
\int_{\overline{\modm}_{g,n}}\hspace{-3mm}\Omega_{g,n}(\sigma)\frac{\kappa_1^{N}}{N!}\prod_{j=1}^n\psi_j^{k_j}\hspace{-1mm}=\hspace{-1mm}\sum_{\mu\vdash N}\frac{(-1)^{N+\ell(\mu)}}{\ell(\mu)!}\int_{\overline{\modm}_{g,n+\ell(\mu)}}\hspace{-9mm}\Omega_{g,n+\ell(\mu)}(\sigma,1^{\ell(\mu)})\prod_{j=n+1}^{n+\ell(\mu)}\frac{\psi_j^{\mu_j}}{\mu_j!}\prod_{j=1}^n\psi_j^{k_j}\hspace{-1mm}.
\]
Since the right hand side can be obtained from coefficients of $\bar{\cz}(\hbar,s,\vec{t})$, this shows that $\cz(\hbar,s,\vec{t})$ is uniquely determined from $\bar{\cz}(\hbar,s,\vec{t})$ and it remains to prove that it is obtained via translation.

Let $\bar{\cf}(\hbar,s,\vec{t})=\log\bar{\cz}(\hbar,s,\vec{t})=\sum\hbar^{g-1}\cf_g(s,\vec{t})$ and consider the translation of variables $t_k\mapsto t_k+\frac{(-1)^{k+1}}{k!}u^k$ where we later substitute $u=2\pi^2$ to produce the right hand side of \eqref{transl}.  
\begin{align*}
\bar{\cf}_g(s,\hbar,t_0&,t_1+u,t_2-\frac{1}{2!}u^2,...)\\
&=\sum_{n,m,\vec{k}}\frac{s^m}{m!}\int_{\overline{\modm}_{g,n+m}}\hspace{-5mm}\Omega_{g,n+m}(e_1^{\otimes n}\otimes e_0^{\otimes m})\prod_{j=1}^n\psi_j^{k_j}\left(t_{k_j}+\frac{(-1)^{k_j+1}}{k_j!}u^{k_j}\right)
\end{align*}
The translation of $\bar{\cf}(\hbar,s,\vec{t})$ does not mix different powers of $s$ so each coefficient of $s^m$ can be treated separately.  The coefficient of %$\displaystyle\prod_{j=1}^nt_{k_j}$ 
%Define \[ F_g(u,\hbar,\vec{t})=\sum_n\frac{1}{n!}\sum_{\vec{k}\in\bn^n}\int_{\overline{\modm}_{g,n}}\hspace{-3mm}\Omega_{g,n}(1^n)\exp(u\kappa_1)\prod_{j=1}^n\psi_j^{k_j}t_{k_j}.\]
$\frac{s^m}{m!}\prod_{j=1}^nt_{k_j}$ in $\bar{\cf}_g(s,\hbar,t_0,t_1+u,t_2-\frac{1}{2!}u^2,...)$ is
\[\sum_{\mu}\binom{n+\ell(\mu)}{n}\prod_{j=1}^{\ell(\mu)}\frac{(-1)^{\mu_j+1}u^{\mu_j}}{\mu_j!(n+\ell(\mu))!}\int_{\overline{\modm}_{g,n+m+\ell(\mu)}}\hspace{-12mm}\Omega_{g,n+m+\ell(\mu)}(1^{n+\ell(\mu)},0^m)\prod_{j=1}^n\psi_j^{k_j}\hspace{-2mm}\prod_{j=n+1}^{n+\ell(\mu)}\psi_j^{\mu_j}
\]
where the sum is over all $\mu=(\mu_1,....,\mu_k)$ for all $k$.
The factor $\binom{n+\ell(\mu)}{n}$ is needed because $\prod_{j=1}^n\psi_j^{k_j}$ has been written in the first $n$ factors, but arises in any $n$ factors taken from $n+\ell(\mu)$ factors.

Collect those $\mu$ that are ordered partitions of $N$,  to obtain the $u^N$ terms of $\bar{\cf}_g(s,\hbar,t_0,t_1+u,t_2-\frac{1}{2!}u^2,...)$:
\begin{align*}
\sum_{\mu}&\binom{n+\ell(\mu)}{n}\prod_{j=1}^{\ell(\mu)}\frac{(-1)^{\mu_j+1}u^{\mu_j}}{\mu_j!(n+\ell(\mu))!}\int_{\overline{\modm}_{g,n+m+\ell(\mu)}}\hspace{-12mm}\Omega_{g,n+m+\ell(\mu)}(1^{n+\ell(\mu)},0^m)\prod_{j=1}^n\psi_j^{k_j}\hspace{-2mm}\prod_{j=n+1}^{n+\ell(\mu)}\psi_j^{\mu_j}\\
&=\sum_Nu^N\sum_{\mu\vdash N}\frac{(-1)^{N+\ell(\mu)}}{\ell(\mu)!}\frac{1}{n!}\int_{\overline{\modm}_{g,n+m+\ell(\mu)}}\hspace{-10mm}\Omega_{g,n+m+\ell(\mu)}(1^{n+\ell(\mu)},0^m)\prod_{j=1}^n\psi_j^{k_j}\hspace{-2mm}\prod_{j=n+1}^{n+\ell(\mu)}\frac{\psi_j^{\mu_j}}{\mu_j!}\\
&=\sum_N\frac{1}{n!}\int_{\overline{\modm}_{g,n+m}}\hspace{-1mm}\Omega_{g,n+m}(\sigma)\frac{(u\kappa_1)^{N}}{N!}\prod_{j=1}^n\psi_j^{k_j},\qquad\sigma=(1^n,0^m).
\end{align*}
Set $u=2\pi^2$, then we have proven that the coefficient of $(2\pi^2)^N\frac{s^m}{m!}\prod_{j=1}^nt_{k_j}$ in $\bar{\cf}_g(s,\hbar,t_0,t_1+2\pi^2,t_2-\frac{1}{2!}4\pi^4,...)$ equals the coefficient of $(2\pi^2)^N\frac{s^m}{m!}\prod_{j=1}^nt_{k_j}$ in $\cf_g(s,\hbar,t_0,t_1,...)$ proving their equality.
\end{proof}

\subsection{The disk function}
We now give a proof of Theorem~\ref{main}.  Let $\ct(f(L)):=\int_0^\infty e^{-zL}f(L)dL$ denote the Laplace transform and define:
\[ F(z):=\ct(L\widehat{V}_{0,1}(s,L)).\]
For $P(x,y)$ an odd polynomial in $x$ and $y$, it is proven in \cite{NorEnu} that:
\[\int_0^\infty\hspace{-2mm}\int_0^\infty\hspace{-2mm} \frac{x^{2i+1}}{(2i+1)!!} \frac{y^{2j+1}}{(2j+1)!!}D(L,x,y)dxdy=\hspace{-4mm}\sum_{m+n=i+j+1}\hspace{-1mm}\frac{L^{2m +1}}{(2m +1)!}a_n\]
for $a_n$ defined by $\frac{1}{\cos{2\pi x}}=\sum_na_nx^{2n}.$
This leads to
\[
\ct\left\{\int_0^\infty\hspace{-2mm}\int_0^\infty dxdy D(L,x,y)P(x,y)\right\}=\left[\frac{1}{\cos(2\pi z)}\ct\{P\}(z,z)\right]_{z=0}.
\]
where $[\cdot]_{z=0}$ denotes the principal part of a function at $z=0$.    Hence
the recursion relation in Theorem~\ref{main} is equivalent to:
\begin{equation}  \label{Frec} F(z)=\frac{s^2}{2z^2}+\left[\frac{F(z)^2}{2\cos(2\pi z)}\right]_{z=0}
\end{equation}
which we prove below.
This allows for an easy expansion of $F(z)$:
\[ F(z)=\frac{s^2}{2z^2}+\frac{s^4}{8}\left(\frac{1}{z^4}+\frac{2\pi^2}{z^2}\right)+\frac{s^6}{48}\left(\frac{3}{z^6}+\frac{12\pi^2}{z^4}+\frac{22\pi^4}{z^2}\right)+O(s^8)
\]
which produces the series for $ \widehat{V}_{0,1}\WP(s,L)$ in the introduction.

To prove \eqref{Frec}, with the help of Theorem~\ref{th:transl}, define instead the volume associated to the partition function $\bar{\cz}(\hbar,s,\vec{t})$:
\begin{equation}  
\widehat{V}_{g,n}(s,L_1,...,L_n):=\sum_{m=0}^\infty\frac{s^m}{m!}\int_{\overline{\modm}_{g,n+m}}\hspace{-5mm}\Omega_{g,n+m}(e_1^{\otimes n}\otimes e_0^{\otimes m})\exp\left\{\frac12\sum_{i=1}^n L_i^2\psi_i\right\}.
\end{equation}
which gives the top degree terms of $\widehat{V}_{g,n}\WP(L_1,...,L_n)$ for each coefficient of $s^{2m}$.  Furthermore, the recursion \eqref{reconj} restricts to these top degree terms.  Define
\[ G(z):=\ct(L\widehat{V}_{0,1}(s,L)).\]
The top degree part of the disk recursion in Theorem~\ref{main} is equivalent to the recursion:
\[ G(z)=\frac{s^2}{2z^2}+\left[\frac12G(z)^2\right]_{z=0}
\]
proven by replacing the $1/\cos(2\pi z)$ term above by $1$.  It leads to the expansion:
\[ G(z)=\frac{s^2}{2z^2}+\frac{s^4}{8z^4}+\frac{s^6}{16z^6}+O(s^8)
\]
\begin{thm}  \label{th:Grec}
The disk function $G(z):=\cl(L\widehat{V}_{0,1}(s,L))$ is uniquely determined by the recursion
\begin{equation}  \label{Grec} 
G(z)=\frac{s^2}{2z^2}+\left[\frac12G(z)^2\right]_{z=0}.
\end{equation}
\end{thm}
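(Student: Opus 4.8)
The plan is to reduce \eqref{Grec} to a closed algebraic equation and then to derive that equation from the boundary geometry underlying the coefficients of $G$. First I would record that $G(z)$ is a power series in $s^2$ whose coefficients are \emph{purely polar} in $z$: only even $m=2\ell$ contribute (spin parity), and for such $m$ the degree of $c_{\text{top}}(\bar E_{0,\sigma}^\vee)$ together with the single $\psi_1$-power forces the surviving numbers to be $I_{2\ell}:=\int_{\overline{\modm}_{0,2\ell+1}}\Omega_{0,2\ell+1}(e_1\otimes e_0^{\otimes 2\ell})\psi_1^{\ell-1}$, whose associated monomial $L^{2\ell-1}$ Laplace-transforms to a constant multiple of $z^{-2\ell}$. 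Thus $G(z)=\sum_{\ell\ge 1}c_\ell\,s^{2\ell}z^{-2\ell}$ with $c_\ell$ a fixed normalisation of $I_{2\ell}$. Since $G$ has no non-negative powers of $z$, neither does $G^2$, so the principal part acts as the identity on $\tfrac12 G^2$ and \eqref{Grec} is \emph{equivalent} to $\tfrac12 G^2-G+\tfrac{s^2}{2z^2}=0$, i.e. to the convolution recursion $c_\ell=\tfrac12\sum_{\ell'+\ell''=\ell}c_{\ell'}c_{\ell''}$ ($\ell\ge 2$) with $c_1=\tfrac12$, whose solution is the closed form $G=1-\sqrt{1-s^2/z^2}$.

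The base case is immediate: $I_2=\Omega_{0,3}(e_1\otimes e_0\otimes e_0)$ is, by \eqref{Omegadef}, the rank-zero class $\epsilon_{0,(1,0,0)}\,p_*1$ evaluated over the point $\overline{\modm}_{0,3}$, where the factor $\epsilon_{0,(1,0,0)}=2$ cancels the degree $\tfrac12$ of the $\bz_2$-gerbe $p$, giving $I_2=1$ and hence $c_1=\tfrac12$, which reproduces the leading term $\tfrac{s^2}{2z^2}$.

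The heart of the argument is the convolution recursion for $\ell\ge 2$, which I would extract from the splitting axiom of the CohFT $\Omega$. Fixing two of the Ramond points as references, I would expand $\psi_1$ on $\overline{\modm}_{0,2\ell+1}$ as a sum of boundary divisors $D_I$ separating the Neveu--Schwarz point $1$ from the two references, restrict the integrand $\Omega_{0,2\ell+1}(e_1\otimes e_0^{\otimes 2\ell})\psi_1^{\ell-1}$ to each $D_I\cong\overline{\modm}_{0,|I|+1}\times\overline{\modm}_{0,|I^{c}|+1}$, and apply \eqref{glue} with $\Delta=e_1\otimes e_1$ at the node. In genus zero only two-vertex trees survive, so each contribution factors into a product of two disk integrands: one carrying point $1$ together with a subset of the Ramond points and the node labelled $e_1$, the other carrying the complementary Ramond points and its node $e_1$ in the role of the distinguished Neveu--Schwarz point. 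The $\psi$-classes at the node on the two sides reconstruct the two $z$-dependent factors, summing over the Ramond partition with the weighting $s^m/m!$ assembles the product of exponential generating functions, and the factor $\eta(e_1,e_1)=\tfrac12$ from $\Delta$ supplies the prefactor; together these yield exactly $\tfrac12 G^2$, hence \eqref{Grec}. Uniqueness is automatic, since the recursion determines each $c_\ell$ from strictly lower ones.

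The main obstacle is the bookkeeping in this last step. One must verify that the genus-zero boundary relation for $\psi_1$, combined with \eqref{glue}, distributes the power $\psi_1^{\ell-1}$ and the node $\psi$-classes so that each side lands at precisely its disk degree $\ell_i-1$, that no stratum with three or more vertices contributes, and that the binomial factors from choosing the Ramond subset $I$ combine with the $1/m!$ weighting and the Laplace normalisation $2^{\ell-1}(\ell-1)!$ into the single clean coefficient $\tfrac12$. Once the convolution identity $c_\ell=\tfrac12\sum_{\ell'+\ell''=\ell}c_{\ell'}c_{\ell''}$ is confirmed, the closed form $G=1-\sqrt{1-s^2/z^2}$ and the theorem follow.
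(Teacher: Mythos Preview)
Your algebraic reductions are fine: $G(z)=\sum_{\ell\ge1}c_\ell\,s^{2\ell}z^{-2\ell}$ is purely polar, so \eqref{Grec} is equivalent to the Catalan-type convolution $c_\ell=\tfrac12\sum_{\ell'+\ell''=\ell}c_{\ell'}c_{\ell''}$ with $c_1=\tfrac12$, and the closed form $G=1-\sqrt{1-s^2/z^2}$ follows. The base case computation is also correct.

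The gap is in your geometric step. When you write $\psi_1=\sum_{1\in I,\;r_1,r_2\in I^c}D_I$ and restrict $\psi_1^{\ell-1}\Omega_{0,2\ell+1}(e_1\otimes e_0^{\otimes 2\ell})$ to $D_I$, the entire remaining power $\psi_1^{\ell-2}$ sits on the $I$-component; no $\psi$-class appears at the node on either side. The $I^c$-side integrand is bare $\Omega_{0,|I^c|+1}(e_1\otimes e_0^{\otimes|I^c|})$, which has cohomological degree $\tfrac{|I^c|}{2}-1$; this equals $\dim\overline{\modm}_{0,|I^c|+1}=|I^c|-2$ only when $|I^c|=2$. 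So the sum over Ramond partitions collapses to the single term $I^c=\{r_1,r_2\}$, and you do \emph{not} get a bilinear convolution of disk integrands. What survives on the $I$-side is $\int_{\overline{\modm}_{0,2\ell}}\psi_1^{\ell-2}\Omega_{0,2\ell}(e_1^{\otimes2}\otimes e_0^{\otimes 2\ell-2})$, with two Neveu--Schwarz insertions; applying \eqref{forgbund1} to the extra $e_1$ at the node and pushing forward gives the \emph{linear} recursion $I_{2\ell}=(2\ell-3)\,I_{2\ell-2}$, hence $I_{2\ell}=(2\ell-3)!!$.

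This is exactly the paper's route: compute the intersection numbers explicitly as double factorials, repackage them as $G(z)=\sum_{m\ge0}\tfrac{C_m}{2^{2m+1}}(s/z)^{2m+2}$, and then verify \eqref{Grec} \emph{a posteriori} via the Catalan identity $C_m=\sum_{i+j=m-1}C_iC_j$. The quadratic structure of \eqref{Grec} is not visible at the level of a single boundary restriction; it is a combinatorial repackaging of the linear geometric recursion. Your proposal is salvageable, but the ``heart of the argument'' must be replaced by this two-step computation rather than a direct factorisation.
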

\begin{proof}
Consider the map $\pi^{(2m)}:\overline{\modm}_{0,2m+3}\to\overline{\modm}_{0,3}$ that forgets all but three marked points.  Then
%\[\int_{\overline{\modm}_{0,2m+3}}\Omega_{0,2m+3}(e_1\otimes e_0^{\otimes 2m+2})\]
\[\psi_1=(\pi^{(2m)})^*\psi_1+\sum_{\underset{2,3\in I^c}{1\in I}}D_I=\sum_{\underset{2,3\in I^c}{1\in I}}D_I\]
since $(\pi^{(2m)})^*\psi_1=0$.  Using $\displaystyle\psi_1^m=\psi_1^{m-1}\sum_{\underset{2,3\in I^c}{1\in I}}D_I$ for $m>0$, we have
\begin{align*}
\int_{\overline{\modm}_{0,2m+3}}\hspace{-2mm}\psi_1^m\Omega_{0,2m+3}(e_1\otimes e_0^{\otimes 2m+2})&=\sum_{\underset{2,3\in I^c}{1\in I}}\int_{\overline{\modm}_{0,2m+3}}\hspace{-5mm}\psi_1^{m-1}D_I\Omega_{0,2m+3}(e_1\otimes e_0^{\otimes 2m+2}).
\end{align*}
The summand is given by
\begin{align*}
\int_{\overline{\modm}_{0,2m+1}}\hspace{-5mm}&\psi_1^{m-1}D_I\Omega_{0,2m+3}(e_1\otimes e_0^{\otimes 2m+2})\\
&=\int_{\overline{\modm}_{0,|I|+1}}\hspace{-5mm}\psi_1^{m-1}\Omega_{0,|I|+1}(e_1\otimes e_0^{\otimes (|I|-1)}\otimes e_1)\int_{\overline{\modm}_{0,|I^c|+1}}\hspace{-5mm}\Omega_{0,|I^c|+1}(e_1\otimes e_0^{\otimes |I^c|}).
\end{align*}
For $\sigma\in\{0,1\}^n$, 
\[\deg\Omega_{0,n}(e_\sigma)=\frac12(n+|\sigma|)-2= n-3\qquad\Leftrightarrow \qquad|\sigma|=n-2\] 
hence the second factor in the summand vanishes unless $|I^c|=2$.  So $I^c=\{2,3\}$ and only one summand is non-zero, leaving
\begin{align*}
\int_{\overline{\modm}_{0,2m+3}}\hspace{-2mm}\psi_1^m\Omega_{0,2m+3}&(e_1\otimes e_0^{\otimes 2m+2})\\
&=\int_{\overline{\modm}_{0,2m+2}}\hspace{-5mm}\psi_1^{m-1}\Omega_{0,2m+2}(e_1^{\otimes 2}\otimes e_0^{\otimes 2m})\int_{\overline{\modm}_{0,3}}\hspace{-5mm}\Omega_{0,|I^c|+1}(e_1\otimes e_0^{\otimes 2})\\
&=\int_{\overline{\modm}_{0,2m+2}}\hspace{-5mm}\psi_1^{m-1}\Omega_{0,2m+2}(e_1^{\otimes 2}\otimes e_0^{\otimes 2m})\\
&=\int_{\overline{\modm}_{0,2m+2}}\hspace{-5mm}\psi_1^{m-1}\psi_2\pi^*\Omega_{0,2m+1}(e_1\otimes e_0^{\otimes 2m})\\
&=\int_{\overline{\modm}_{0,2m+2}}\hspace{-5mm}\psi_2\pi^*\big(\psi_1^{m-1}\Omega_{0,2m+1}(e_1\otimes e_0^{\otimes 2m})\big)\\
&=(2m-1)\int_{\overline{\modm}_{0,2m+1}}\hspace{-5mm}\psi_1^{m-1}\Omega_{0,2m+1}(e_1\otimes e_0^{\otimes 2m})\\
&=(2m-1)!!
\end{align*}
where the first equality uses the restriction of $\Omega_{0,2m+3}$ to a boundary divisor, the second equality uses $\Omega_{0,|I^c|+1}(e_1\otimes e_0^{\otimes 2})=1$ from Lemma~\ref{initial}, the third equality uses \eqref{forgbund1}, the fourth equality uses $\psi_2\psi_m=\psi_2\pi^*\psi_m$ where $\pi$ is the forgetful map that forgets the 2nd point, the fifth equality uses pushforward and the sixth equality uses induction.

Hence
\[ \widehat{V}_{0,1}(s,L)=\sum_{m=0}^\infty\frac{(2m-1)!!}{m!2^m}L^{2m}\frac{s^{2m+2}}{(2m+2)!}
\]
thus
\[ G(z)=\cl(L\widehat{V}_{0,1}(s,L))=\sum_{m=0}^\infty\frac{(2m-1)!!}{m!2^m(2m+2)}\frac{s^{2m+2}}{z^{2m+2}}=\sum_{m=0}^\infty\frac{C_m}{2^{2m+1}}\frac{s^{2m+2}}{z^{2m+2}}
\]
where $\displaystyle C_m=\frac{1}{m+1}\binom{2m}{m}$ is the $m$th Catalan number.

The recursion \eqref{Grec} uniquely determines a series 
\[g(z)=\sum g_m\frac{s^{2m}}{z^{2m}}=\frac{s^2}{2z^2}+\frac{s^4}{8z^4}+\frac{s^6}{16z^6}+O(s^8).\]  
We prove that $g(z)$ coincides with $G(z)$ by induction.  The initial coefficient is $G(z)=\frac{C_0}{2}\frac{s^2}{z^2}+O(s^4)=\frac{s^{2}}{2z^{2}}+O(s^4)$ which agrees with the initial coefficient of $g(z)$.  Assume that the coefficients of $g(z)$ agree with the coefficients of $G(z)$ up to the terms $\left(\frac{s}{z}\right)^{2m}$, i.e. 
\[g_k= \frac{C_{k-1}}{2^{2k-1}},\quad k\leq m\]
Then the coefficient of $\left(\frac{s}{z}\right)^{2m+2}$ in $g(z)$ equals the coefficient of $\left(\frac{s}{z}\right)^{2m+2}$ in $\frac12g(z)^2$ which is given by:
\[ g_{m+1}=\frac12\sum_{k=1}^{m}g_kg_{m+1-k}=\frac{1}{2^{2m+1}}\sum_{k=1}^{m}C_{k-1}C_{m-k}= \frac{C_m}{2^{2m+1}}
\]
hence by induction $g_k= \frac{C_{k-1}}{2^{2k-1}}$ for all $k$ and we conclude that $G(z)$ satisfies the recursion \eqref{Grec}.
\end{proof}

\begin{cor}
The disk function $F(z)$ satisfies $\eqref{Frec}$, hence Theorem~\ref{main} holds.
\end{cor}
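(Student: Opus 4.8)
The plan is to upgrade the recursion \eqref{Grec}, already established for the $\kappa_1$-free disk function $G$ in Theorem~\ref{th:Grec}, to the full recursion \eqref{Frec} for $F$, using Theorem~\ref{th:transl} to supply the missing insertion $\exp\{2\pi^2\kappa_1\}$. Recall that, by the Laplace-transform lemma stated just before \eqref{Frec}, the kernel $D$ contributes under $\ct$ exactly the factor $1/\cos(2\pi z)=\sum_n a_n z^{2n}$, so Theorem~\ref{main} is equivalent to \eqref{Frec}; Theorem~\ref{th:Grec} proves the companion recursion \eqref{Grec}, which is \eqref{Frec} with this factor replaced by $1$ (and whose solution is the closed form $G(z)=1-\sqrt{1-s^2/z^2}$). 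Thus the whole discrepancy between \eqref{Grec} and \eqref{Frec} is the passage from the top-degree truncation governing $G$ to the full volume governing $F$, i.e.\ the reinstatement of $\exp\{2\pi^2\kappa_1\}$.

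First I would write the full disk volume as
\[\widehat{V}_{0,1}\WP(s,L)=\sum_{m}\frac{s^{2m+2}}{(2m+2)!}\sum_{a+k=m}\frac{(2\pi^2)^a}{a!}\frac{(L^2/2)^k}{k!}\int_{\overline{\modm}_{0,2m+3}}\kappa_1^a\psi_1^k\,\Omega_{0,2m+3}(e_1\otimes e_0^{\otimes 2m+2}),\]
the constraint $a+k=m$ being forced by $\dim\overline{\modm}_{0,2m+3}=2m$ and $\deg\Omega=m$. Applying the pushforward relation \eqref{KMZ} exactly as in the proof of Theorem~\ref{th:transl} rewrites each $\int\kappa_1^a\psi_1^k\,\Omega$ as a $(2\pi^2)^a$-weighted sum of $\kappa_1$-free correlators carrying $\ell(\mu)$ additional Neveu-Schwarz points decorated by $\psi_j^{\mu_j}$, whose values are supplied by the same boundary-divisor argument used in Theorem~\ref{th:Grec}. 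Forming $F(z)=\ct(L\,\widehat{V}_{0,1}\WP)$, each power $L^{2k+1}$ produces $(2k+1)!/z^{2k+2}$, matching the odd-double-factorial structure of the $D$-kernel lemma; I would then resum over $a$, so that the signs $(-1)^{a+\ell(\mu)}$, the factors $1/\mu_j!$, and the Laplace images of the decorating $\psi$-powers collapse into the Euler-number series $\sum_n a_n z^{2n}=1/\cos(2\pi z)$. This converts the trivial node weight of \eqref{Grec} into the $1/\cos(2\pi z)$ weight of \eqref{Frec}. Since \eqref{Frec} has a unique order-by-order solution in $s$, matching it against this resummed translate of $G$ yields $F$.

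The main obstacle is precisely this resummation: verifying that the combinatorics of \eqref{KMZ}---ordered partitions $\mu$, the alternating signs, and the weights $(2\pi^2)^{|\mu|}$---reproduce term by term the coefficients $a_n$ of $\sec(2\pi z)$, rather than some other $\pi^2$-series. The structural input that organises this is the additivity $\phi_{h,I}^*\kappa_1=\kappa_1\otimes 1+1\otimes\kappa_1$ under the gluing maps, which makes $\exp\{2\pi^2\kappa_1\}$ compatible with the quadratic splitting so that both arguments of the term $F(z)F(z)$ in \eqref{Frec} are the \emph{full} disk function and not its truncation $G$; the residual task is exactly to check that the accompanying series is $\sec(2\pi z)$. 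With the resummation in hand, the equivalence of \eqref{Frec} with Theorem~\ref{main} via the $D$-kernel lemma completes the proof of the Corollary.
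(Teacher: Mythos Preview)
Your overall strategy---deduce \eqref{Frec} from \eqref{Grec} by reinstating $\exp\{2\pi^2\kappa_1\}$ via the translation of Theorem~\ref{th:transl}---is exactly the paper's strategy. The paper's proof is short: Theorem~\ref{th:Grec} gives the top-degree recursion \eqref{Grec}; Theorem~\ref{th:transl} says the full volumes arise from the top-degree ones under the translation $t_k\mapsto t_k+(-1)^{k+1}(2\pi^2)^k/k!$; and a result of \cite{NorEnu}, which operates purely at the level of the kernels $D$, $R$ and the linear transformations they induce on polynomials (hence is $s$-independent), asserts that this translation carries the top-degree recursion to the full one, i.e.\ \eqref{Grec} to \eqref{Frec}.

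Where you diverge is in attempting to reconstruct this last step by a direct combinatorial resummation of the KMZ expansion \eqref{KMZ}. You correctly flag this as the main obstacle, but the mechanism you sketch is not quite right: the translation coefficients $(-1)^{k+1}(2\pi^2)^k/k!$ do \emph{not} themselves assemble into $\sec(2\pi z)$ (their generating series in a variable $w$ is $1-e^{-2\pi^2 w}$, not $\sec(2\pi\sqrt{w})-1$), so the appearance of $1/\cos(2\pi z)$ in \eqref{Frec} is not a bare series identity on the KMZ side. It emerges instead from how the translation interacts with the Laplace dictionary between the $t_k$ and $z$, i.e.\ with the double-factorial weights $(2k+1)!!$ built into the $D$-kernel lemma and the operators $\cl_m$. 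The argument in \cite{NorEnu} handles this structurally---conjugating the recursion operators by the translation operator and reading off the transformed kernel---rather than by a termwise KMZ resummation. Your additivity observation $\phi_{h,I}^*\kappa_1=\kappa_1\otimes 1+1\otimes\kappa_1$ is the correct geometric input ensuring both factors of the quadratic term become the full $F$ rather than $G$, but to close the argument you still need the kernel transformation from \cite{NorEnu}; as written, your proposal has identified the gap without filling it.
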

\begin{proof}
Theorem~\ref{th:Grec} shows that the top degree part of the recursion for $\widehat{V}_{0,1}\WP(s,L)$ in Theorem~\ref{main} holds.  Theorem~\ref{th:transl} shows that the lower degree terms of $\widehat{V}_{0,1}\WP(s,L)$ are determined by the top degree terms via translation of the associated partition functions.  It is proven in \cite{NorEnu} that the translation also sends the recursion for the top degree terms, equivalently \eqref{Grec}, to the recursion for the entire polynomial, equivalently \eqref{Frec}.  The same proof applies here, because it involves only the kernels $D(x,y,z)$ and $R(x,y,z)$ and their associated linear transformations on polynomials.
\end{proof}
%$\langle\tau_m\tau_{\bar{0}}^{2m+2}\rangle&=(2m-1)!!2^{-m}$

\section{Virasoro structure}   \label{virastr}

The underlying algebraic, or integrable, structures of partition functions storing intersection numbers over the moduli spaces of marked, stable curves began with the conjecture of Witten \cite{WitTwo}.  In this section we write the exact recursion \eqref{reconj} in terms of Virasoro operators acting on the partition function $\bar{\cz}(\hbar,s,\vec{t})$.  We relate this algebraic structure directly to the geometric structure of the intersection numbers, and in particular use it to prove Theorem~\ref{conj}.  An important intermediary construction is a partition function built from polynomials  in kappa classes which we relate to $\bar{\cz}(\hbar,s,\vec{t})$.

The following Virasoro operators were defined in \cite{IZuCom}, for $m\geq-1$:
\begin{align} \label{virop}
\cl_m:=\frac12\hbar\hspace{-2mm}\sum_{i+j=m-1}\hspace{-2mm}(2i+1)!!(2j+1)!!
\frac{\partial^2}{\partial t_i\partial t_j}+
\sum_{k-j=m}&\frac{(2k+1)!!}{(2j-1)!!}t_j\frac{\partial}{\partial t_k}\\
&+
\hbar^{-1}\frac{\delta_{m,-1}t_0^2}{2}+\frac{\delta_{m,0}}{8}.\nonumber
\end{align} 
They form half of the Virasoro algebra and satisfy:
\[ [\cl_m,\cl_n]=2(m-n)\cl_{m+n},\quad m,n\geq-1.
\]

\begin{proposition}{\cite{NorEnu}}  \label{vireq}
The recursion \eqref{reconj} is equivalent to the Virasoro constraints
\begin{equation}  \label{virconj}
\bigl((2m+1)!!\frac{\partial}{\partial t_m}-\cl_m-\tfrac{s^2}{2\hbar}\delta_{m,0}\bigr)\bar{\cz}(\hbar,s,\vec{t})=0.
\end{equation}
\end{proposition}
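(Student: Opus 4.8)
The plan is to pass from the integral--kernel recursion to a differential--operator identity on the partition function by Laplace transform, matching the three groups of terms in \eqref{reconj} against the three groups of terms in $\cl_m$. Since the operators $\cl_m$ carry no $\kappa_1$ (equivalently no $2\pi^2$) data, they act on $\bar{\cz}$ rather than $\cz$, so I would first reduce \eqref{reconj} to its top-degree part, namely the recursion satisfied by the polynomials $\widehat{V}_{g,n}(s,\vec{L})$ associated to $\bar{\cz}$. This reduction replaces the kernel $D$ by its top-degree piece (the substitution $1/\cos(2\pi z)\mapsto 1$ used in passing from \eqref{Frec} to \eqref{Grec}); by Theorem~\ref{th:transl}, exactly as in the disk case, the translation of variables sends the top-degree recursion back to the full recursion \eqref{reconj}, because only the kernels $D$ and $R$ and their induced linear maps on polynomials are involved. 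It therefore suffices to show that the top-degree recursion is equivalent to \eqref{virconj}.

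Next I would install the Laplace dictionary. Writing $\ct(f)(z)=\int_0^\infty e^{-zL}f(L)\,dL$ and using the normalisation $L_i^{2k}=2^kk!\,t_k$ from \eqref{Zvol}, a monomial $L_1^{2m}$ in $\widehat{V}_{g,n}$ records the variable $t_m$, while $\ct(L_1^{2m+1})=(2m+1)!!\,2^mm!/z^{2m+2}$; hence the coefficient of $z^{-2m-2}$ in $\ct(L_1\widehat{V}_{g,n})$ is $(2m+1)!!$ times $\partial/\partial t_m$ applied to the connected free energy $\bar{\cf}=\log\bar{\cz}$, reproducing the operator $(2m+1)!!\,\partial/\partial t_m$ on the left of \eqref{virconj}. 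On the right I would transform the three groups separately. For the quadratic term $\tfrac12\int_0^\infty\int_0^\infty xyD(L_1,x,y)P_{g,n+1}\,dxdy$ I would apply the identity $\ct\{\int\!\int D(L,x,y)P\,dxdy\}=[\frac{1}{\cos(2\pi z)}\ct\{P\}(z,z)]_{z=0}$ in its top-degree form (kernel $\mapsto 1$, keeping only the constant term $a_0=1$); the principal part then forces the index range $i+j=m-1$ and produces the weights $(2i+1)!!(2j+1)!!$ of the second-order term of $\cl_m$. The sum of $R$-kernel terms transforms, through $R(x,y,z)=\tfrac12(D(x+y,z,0)+D(x-y,z,0))$, into the linear term $\sum_{k-j=m}\frac{(2k+1)!!}{(2j-1)!!}t_j\,\partial/\partial t_k$, and the $\delta_{1n}$ initial-data term produces the inhomogeneous pieces: the value $\widehat{V}_{0,1}^{(2)}=1$ gives the $-\tfrac{s^2}{2\hbar}\delta_{m,0}$ correction, $\widehat{V}_{1,1}^{(0)}=\tfrac18$ gives the $\tfrac18\delta_{m,0}$ inside $\cl_0$, and the base normalisation $\Omega_{0,3}(e_1\otimes e_0^{\otimes2})=1$ of Lemma~\ref{initial} feeds the $m=-1$ string term $\hbar^{-1}\tfrac12 t_0^2$, which together with \eqref{dilaton} fixes the low cases $m=-1,0$ requiring separate bookkeeping.

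The decisive step is the reconciliation between the connected recursion and the linear constraint \eqref{virconj}. The polynomial $P_{g,n+1}$ splits as $\widehat{V}_{g-1,n+1}(s,x,y,L_K)$ plus the sum of products $\sum_{g_1+g_2=g,\,I\sqcup J=K}\widehat{V}_{g_1}\widehat{V}_{g_2}$; under $\bar{\cz}=\exp\bar{\cf}$ the elementary identity $\frac{\partial^2}{\partial t_i\partial t_j}\bar{\cz}=\big(\frac{\partial^2\bar{\cf}}{\partial t_i\partial t_j}+\frac{\partial\bar{\cf}}{\partial t_i}\frac{\partial\bar{\cf}}{\partial t_j}\big)\bar{\cz}$ matches these two contributions exactly with the single operator $\tfrac12\hbar\sum_{i+j=m-1}(2i+1)!!(2j+1)!!\,\frac{\partial^2}{\partial t_i\partial t_j}$ of $\cl_m$, so that dividing the transformed identity by $\bar{\cz}$ converts the quadratic connected recursion into the linear equation \eqref{virconj}. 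The equivalence in both directions is then immediate, since the dictionary is bijective: \eqref{virconj} holds iff all coefficients of $\hbar^{g-1}s^m\prod_i t_{k_i}$ vanish, and these are precisely the coefficients of the top-degree recursion. I expect the main obstacle to be the combinatorial bookkeeping of the double-factorial normalisations through the Laplace transform: justifying the truncation $[\,\cdot\,]_{z=0}$ and its exact agreement with the range $i+j=m-1$, and checking that the symmetrisation implicit in $P_{g,n+1}$ and in the $R$-kernel reproduces the $S_n$-symmetry and the binomial multiplicities without spurious factors, so that the weights $(2i+1)!!(2j+1)!!$ and $(2k+1)!!/(2j-1)!!$ emerge exactly as written.
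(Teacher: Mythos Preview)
Your proposal is correct and follows essentially the same approach as the paper: reduce to the top-degree recursion via the translation of Theorem~\ref{th:transl}, then use the Laplace-transform dictionary to convert the kernels $D$ and $R$ into the second-order and first-order pieces of $\cl_m$ respectively, with the unstable term $\tfrac{1}{2\hbar}t_0s^2$ in $\log\bar{\cz}$ (equivalently $\widehat{V}_{0,1}^{(2)}=1$) supplying the extra $\tfrac{s^2}{2\hbar}\delta_{m,0}$. One small cleanup: the constraint \eqref{virconj} arising from \eqref{reconj} is for $m\geq 0$, so the $m=-1$ string term $\hbar^{-1}\tfrac12 t_0^2$ you mention does not actually enter this equivalence and can be dropped from your bookkeeping.
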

\begin{proof}
The statement holds up to all orders in $s$, not just $O(s^4)$.
The proof in \cite{NorEnu} treats the $s=0$ case but it immediately applies to the more general case.  It shows that the kernels $D(x,y,z)$ and $R(x,y,z)$ in \eqref{reconj} produce linear transformations on monomials $\prod_{i=1}^nL_i^{2m_i}$ which, after the substitution $L_i^{2m}=2^mm!t_m$, convert to the partial differential operators $\cl_n$ in $t_k$.  This identifies \eqref{virconj} with the top degree part of \eqref{reconj}.  The full recursion \eqref{reconj} is proven in \cite{NorEnu} to be a consequence of its top degree part, essentially via conjugating Virasoro operators by the translation operator, although conveniently expressed via topological recursion, which we do not repeat here.  There is one minor adjustment to the argument that must take into account the term $\frac{1}{2\hbar}t_0s^2$ in $\log\bar{\cz}(\hbar,s,\vec{t})$.  This affects only the operator $\cl_0$ and requires the extra term 
\[\frac{s^2}{2\hbar}=\frac{\partial}{\partial t_0}\frac{t_0s^2}{2\hbar}.\]
\end{proof}
\begin{remark}
The Virasoro constraint \eqref{virconj} for $m=0$ can be proven as a consequence of the pullback relation \eqref{forgbund1} as follows.   From \eqref{forgbund1} we get an expression for removal of a $\overline{\tau}_0$ term:
\[ \langle\overline{\tau}_0\prod_{j=1}^n\overline{\tau}_{k_j}\nu^m\rangle_g=(2g-2+n+m)\langle\prod_{j=1}^n\overline{\tau}_{k_j}\nu^m\rangle_g.
\]
The genus $g$ is uniquely determined by $\sum_1^n k_j=g-1+\frac12m$.  The differential operator $\sum_k(2k+1)t_k\frac{\partial}{\partial t_k}$, which forms part of $\cl_0$, multiplies the monomial $\prod_{j=1}^nt_{k_j}$ by $\sum_1^n (2k_j+1)=2g-2+m+n$ hence $\frac{\partial}{\partial t_0}-\cl_0$ achieves removal of a $\overline{\tau}_0$ term.  The extra terms $\frac18$ and $\frac{s^2}{2}$ in $\cl_0-\frac12s^2$ take into account the terms $\frac18t_0$ and $\frac{s^2t_0}{2}$.
\end{remark}

\subsection{Kappa class tau function}
Consider the following sequence of polynomials in kappa classes introduced in \cite{KNoPol}.   Define $\{s_i\in\bq\mid i>0\}$ by
$$\exp\left(-\sum_{i>0} s_it^i\right)=\sum_{k=0}^\infty(-1)^k(2k+1)!!t^k.
$$
Define
$$\bk=K_0+K_1+K_2+...=\exp(\sum s_i\kappa_i)\in H^*(\overline{\modm}_{g,n},\bq)$$
where $K_m\in H^{2m}(\overline{\modm}_{g,n},\bq)$.
This defines a sequence of homogeneous polynomials in $\kappa_m$.  The first few polynomials are given by:
$$K_1=3\kappa_1,\quad K_2=\frac32(3\kappa_1^2-7\kappa_2),\quad K_3=\frac32(3\kappa_1^3-21\kappa_1\kappa_2+46\kappa_3)$$
$$K_4=\frac98(3\kappa_1^4-42\kappa_1^2\kappa_2+49\kappa_2^2+184\kappa_1\kappa_3-562\kappa_4)
$$

\begin{thm} \cite{CGGRel}  \label{vanK}
The following polynomial relations among the $\kappa_j$ hold:
\[
K_m(\kappa_1,...,\kappa_m)=0\quad\text{for}\quad m>2g-2+n\quad\text{except}\quad (m,n)=(3g-3,0). 
\]
\end{thm}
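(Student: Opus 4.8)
The plan is to realize the kappa polynomials $K_m$ as components of a pushforward of a Chern class of the spin bundle $\bar E_{g,\sigma}$, and then to deduce the vanishing from the vanishing of Chern classes above the rank of that bundle. The key numerical coincidence driving the argument is that the threshold $2g-2+n$ in the statement is \emph{exactly} the rank of the all--Neveu--Schwarz bundle $\bar E_{g,1^n}\to\overline{\modm}_{g,n}^{\text{spin}}$: by \eqref{rank}, with $|\sigma|=n$, one has $\text{rank}\,\bar E_{g,1^n}=2g-2+\tfrac12(n+n)=2g-2+n$, and hence $c_k(\bar E_{g,1^n})=0$ for $k>2g-2+n$. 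The strategy is therefore to express $K_m$, for $m$ in the relevant range, in terms of the degree-$m$ part $p_*c_m(\bar E_{g,\sigma})$ of the spin class $\Omega^{\text{spin}}_{g,n}(e_\sigma)=p_*c(\bar E_{g,\sigma})$ for a suitable configuration $\sigma$, so that this rank bound becomes the asserted relation among kappa classes.

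First I would set up the translation between kappa classes and the spin classes $\Omega^{\text{spin}}_{g,n}$. The coefficients $s_i$ defining $\bk=\exp(\sum_i s_i\kappa_i)$ are prescribed by $\exp(-\sum_{i>0}s_it^i)=\sum_k(-1)^k(2k+1)!!t^k$, and the double factorials $(2k+1)!!$ are precisely the local contributions of the marked and Ramond points to spin intersection numbers. This is visible already in Theorem~\ref{th:Grec}, where $\int_{\overline{\modm}_{0,2m+3}}\psi_1^m\,\Omega_{0,2m+3}(e_1\otimes e_0^{\otimes 2m+2})=(2m-1)!!$, and more structurally in the stable-graph expansion of $p_*c(\bar E_{g,\sigma})$ of \cite{JPPZDou,LPSZChi}. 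The plan is to compute the pushforward of the spin class under the map forgetting a gas of Ramond points, using the relation $\pi_*(\psi_{n+1}^{a}\,\Omega_{g,n+1}(\sigma,e_1))=\kappa_a\,\Omega_{g,n}(\sigma)$ derived from \eqref{forgbund1} in the proof of Theorem~\ref{th:transl}, together with the gluing relations \eqref{glue} and the double-factorial local factors. Organising these contributions over the symmetrised (hence exponentiated) gas of forgotten points should reproduce the exponential generating class $\exp(\sum_i s_i\kappa_i)=\bk$; this is the analogue for $\Omega$-classes of the Kaufmann--Manin--Zagier pushforward computation underlying \eqref{KMZ} and Theorem~\ref{th:transl}.

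Granting this identification, the vanishing $K_m=0$ for $m>2g-2+n$ follows at once: the degree-$m$ part of $\bk$ is matched with $p_*c_m(\bar E_{g,\sigma})$, which vanishes once $m$ exceeds the rank $2g-2+n$. The single exception $(m,n)=(3g-3,0)$ has to be isolated by hand. It arises because the construction expressing $K_m$ through Chern classes relies on a forgetful map to a base carrying marked points, and for $n=0$ at the top degree $m=3g-3=\dim\overline{\modm}_g$ this pushforward identity acquires an extra fundamental-class contribution --- the same degeneration responsible for the special top term in \eqref{KMZ}. In that one top-dimensional case $K_{3g-3}$ is not governed by a Chern class above the rank and is in general nonzero, which is exactly why it must be excluded; for $n\geq 1$ no such correction appears and even the top-degree kappa class $K_{3g-3+n}$ vanishes.

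The main obstacle is the combinatorial identification in the second step: proving that the $\psi$-class and boundary corrections in the pushforward of $p_*c(\bar E_{g,\sigma})$ organise so that its pure-kappa content is exactly $\exp(\sum_i s_i\kappa_i)$ with the prescribed double-factorial coefficients. This requires the full stable-graph expansion of the Chiodo class rather than just the leading local factor, and careful control of the symmetry factors $1/m!$ for the gas of Ramond points. A secondary, more delicate point is to verify that the exceptional correction is supported precisely in top degree with $n=0$ and nowhere else, which amounts to checking that the unstable term in the forgetful pushforward contributes only at $(m,n)=(3g-3,0)$.
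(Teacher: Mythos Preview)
The paper does not prove this statement: Theorem~\ref{vanK} is quoted from \cite{CGGRel} without proof and used only as input (to ensure that $\bk_{g,n}(s)$ and hence $\cz^{\bk}$ are regular in $s$). There is therefore no argument in the paper against which to compare your proposal.

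For what it is worth, your overall strategy---identify $\bk=\exp(\sum_i s_i\kappa_i)$ with the $\kappa$-part of the Chiodo class $p_*c(\bar E_{g,1^n})$ and then invoke the rank bound $\text{rank}\,\bar E_{g,1^n}=2g-2+n$ to kill degree-$m$ components for $m>2g-2+n$---is indeed the mechanism underlying the argument in \cite{CGGRel}. You correctly flag that the substantive work lies in the combinatorial identification, since the Chiodo class carries $\psi$- and boundary-contributions in addition to the $\kappa$-exponential, so $c_m=0$ does not literally read as $K_m=0$; this requires the stable-graph expansion of \cite{JPPZDou,LPSZChi} and an induction over strata that you gesture at but do not carry out. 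Two points to tighten: your second paragraph conflates the all--Neveu--Schwarz Chiodo class (whose rank supplies the threshold $2g-2+n$) with the Ramond-gas pushforward of Theorem~\ref{pushK} (which realises $K_j$ only for $j\le 2g-2+n$ and so cannot itself witness relations above the threshold); and your explanation of the exception $(m,n)=(3g-3,0)$ via a ``special top term'' in \eqref{KMZ} is misplaced, as \eqref{KMZ} has no such term---the nonvanishing of $K_{3g-3}$ on $\overline{\modm}_g$ is a separate computation.
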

Theorem~\ref{vanK} implies that the following functions are regular in $s$.  Define
\[\bk_{g,n}(s):=\sum_m s^{2(2g-2+n-m)}\bk_m\in H^*(\overline{\modm}_{g,n},\bq)\]
and a generating function
\[\cf^\bk(\hbar,s,t_0,t_1,t_2,\dots)=\sum_{g,n,\vec{k}}
\frac{\hbar^{g-1}}{n!}\sum_{n=0}^\infty\int_{\overline{\modm}_{g,n}}\bk_{g,n}(s) \prod_{i=1}^n\psi_i^{k_i}t_{k_i}.
\]

It is proven in \cite{KNoPol} that the partition function $\cz^\bk(\hbar,s,\vec{t})=\exp\cf^\bk(\hbar,s,\vec{t})$ satisfies the following Virasoro constraints
\begin{equation}\label{virK}
\bigl\{(2m+1)!!\frac{\partial}{\partial t_m}-\cl_m-s^2\cl_{m-1}\bigr\}\cz^\bk(\hbar,s,\vec{t})=0,\quad m=0,1,2,\dots
\end{equation}
where the Virasoro operators $\cl_m$ are defined in \eqref{virop}.
These equations determine $\cz^\bk(\hbar,s,\vec{t})$ uniquely and give another proof that it is regular in $s$, and also shows that $\cz^\bk(\hbar,s=0,\vec{t})$ is the BGW tau function define in  \cite{BGrExt,GWiPos}.

The relation of the polynomials $K_m(\kappa_1,...,\kappa_m)$ with the cohomology classes in this paper, and hence also the volumes, is given in the following theorem.
\begin{thm} \cite{CGGRel}  \label{pushK}
The following pushforward relations hold:
\[
\frac{1}{m!}\pi^{(m)}_*\Omega_{g,n+m}(e_1^{\otimes n}\otimes e_0^{\otimes m})=K_{2g-2+n-\frac12m}(\kappa_1,\kappa_2,...).
\]
\end{thm}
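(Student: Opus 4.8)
The plan is to recast Theorem~\ref{pushK} as a single closed identity of tautological classes and then verify it through a pushforward computation organized by the Virasoro structure already in place. First I would record the degree bookkeeping: with $\sigma=(1^n,0^m)$ the class $c_{\mathrm{top}}(\bar E_{g,\sigma}^\vee)$ has rank $2g-2+n+\tfrac12 m$, so $\Omega_{g,n+m}(e_1^{\otimes n}\otimes e_0^{\otimes m})$ lies in degree $2(2g-2+n+\tfrac12 m)$; since $p$ is finite and $\pi^{(m)}$ lowers degree by $2m$, the pushforward $\tfrac1{m!}\pi^{(m)}_*\Omega$ lands in $H^{2(2g-2+n-\frac12 m)}$, matching $K_{2g-2+n-\frac12 m}$. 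Collecting over $m$ with the weight $s^{2(2g-2+n-m')}=s^{m}$ shows that the theorem is equivalent to the compact identity
\[
\bk_{g,n}(s)=\sum_{m\ge0}\frac{s^m}{m!}\,\pi^{(m)}_*\Omega_{g,n+m}(e_1^{\otimes n}\otimes e_0^{\otimes m})\in H^*(\overline{\modm}_{g,n},\bq),
\]
where only even $m$ contribute by the parity of the Ramond points. I would prove this cohomological identity, from which the graded statement follows.

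Next I would integrate both sides against an arbitrary monomial $\prod_i\psi_i^{k_i}$ and assemble the results into generating functions. The left-hand side is by definition a coefficient of $\cf^\bk$, whose partition function $\cz^\bk$ is pinned down uniquely by the Virasoro constraints \eqref{virK}; so it suffices to show that the generating function built from $\tfrac1{m!}\int_{\overline{\modm}_{g,n}}\pi^{(m)}_*\Omega\cdot\prod_i\psi_i^{k_i}$ obeys the same constraints. By the projection formula this integral equals $\tfrac1{m!}\int_{\overline{\modm}_{g,n+m}}\Omega_{g,n+m}(e_1^{\otimes n}\otimes e_0^{\otimes m})\prod_i(\pi^{(m)})^*\psi_i^{k_i}$, which differs from the spin intersection numbers stored in $\bar\cz$ only through the comparison $\psi_i=(\pi^{(m)})^*\psi_i+(\text{boundary})$. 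The engine of the computation is the forgetful relation \eqref{forgbund1}, $\Omega_{g,n+1}(e_1,\dots)=\psi_{n+1}\pi^*\Omega_{g,n}(\dots)$, together with its pushforward consequence $\pi_*(\psi_{n+1}^{a}\Omega_{g,n+1}(\sigma,1))=\kappa_a\,\Omega_{g,n}(\sigma)$ used in the proof of Theorem~\ref{th:transl}; these convert $\psi$-powers at forgotten legs into $\kappa$-classes, and the bookkeeping mirrors the $\kappa_1^N$ expansion \eqref{KMZ}. I would establish the analogous pushforward behaviour for the $e_0$-legs and show that the resulting $\kappa$-polynomial generating function is the translate of the spin constraints of Proposition~\ref{vireq} under the $\psi$-comparison, hence satisfies \eqref{virK}.

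The identification with $K_m$ is then a combinatorial matching, needed regardless of the Virasoro route. Forgetting the Ramond legs produces an explicit polynomial in the $\kappa_i$, whose coefficients are sums over the ways the forgotten points are grouped; the double-factorial weights $(2k+1)!!$ enter through the Chiodo/GRR description of $\bar E$, exactly as they enter the Virasoro operators \eqref{virop}. I would verify that the generating series of these weights coincides with $\sum_k(-1)^k(2k+1)!!\,t^k$, so that the assembled class is $\exp(\sum_i s_i\kappa_i)$ in the relevant degree, namely $K_m$. The vanishing Theorem~\ref{vanK} guarantees that $\bk_{g,n}(s)$ is a polynomial of the expected degree, consistent with the finiteness of the sum on the right and providing a useful consistency check.

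The main obstacle will be controlling the boundary corrections in $\psi_i=(\pi^{(m)})^*\psi_i+(\text{boundary})$: when a Neveu--Schwarz point collides with forgotten Ramond points one picks up divisors on which $\Omega$ must be restricted, and the argument depends on these restrictions factorizing cleanly. Here the gluing axiom \eqref{glue} is decisive, since $\Omega$ satisfies it with $\Delta=e_1\otimes e_1$ and the Ramond nodes drop out by the degeneracy of the associated bilinear form noted after \eqref{Omegadef}; this keeps the bookkeeping finite and tractable. Verifying that the surviving Neveu--Schwarz contributions reassemble into exactly the double-factorial combinatorics defining the $s_i$, rather than some deformation of them, is the delicate heart of the proof.
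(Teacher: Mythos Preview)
The paper does not prove Theorem~\ref{pushK}; it is quoted from \cite{CGGRel} and used as input. So there is no proof in the paper to compare against, and your task was really to supply an independent argument. What you have written is a plan, not a proof, and the plan has a genuine gap at its center.

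The entire content of the theorem is the behaviour of $\Omega$ under forgetting the \emph{Ramond} ($e_0$) legs. The tools you invoke---the forgetful relation \eqref{forgbund1} and its pushforward consequence $\pi_*(\psi_{n+1}^a\Omega_{g,n+1}(\sigma,1))=\kappa_a\Omega_{g,n}(\sigma)$---are statements about forgetting \emph{Neveu--Schwarz} ($e_1$) legs. When you write ``I would establish the analogous pushforward behaviour for the $e_0$-legs,'' you are deferring exactly the thing to be proved. There is no short exact sequence analogous to the one preceding \eqref{forgbund} for the map that forgets a Ramond point (indeed there is no such forgetful map on the spin moduli space, since one cannot remove a Ramond point and keep a spin structure), so the mechanism producing the double-factorial weights must come from somewhere else---in \cite{CGGRel} it comes from the explicit Chiodo/Grothendieck--Riemann--Roch expression for the class, which you allude to but never actually deploy.

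There is also a circularity risk in your Virasoro route. You propose to show that the generating function of $\int\pi^{(m)}_*\Omega\cdot\prod\psi_i^{k_i}$ satisfies \eqref{virK} by relating it to the constraints on $\bar\cz$. But in this paper the constraints on $\bar\cz$ are established only to $O(s^4)$ (Proposition~\ref{virzbar}), and that proof already consumes the $m=0$ and $m=2$ cases of Theorem~\ref{pushK} via Proposition~\ref{OmK}. So the Virasoro constraints you want to lean on are downstream of the statement you are trying to prove. A clean argument must either work directly with the Chiodo formula for $p_*c(\bar E_{g,\sigma})$ and compute the pushforward of its top piece along $\pi^{(m)}$, or find a genuinely independent characterization of the pushforward.
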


The partition function $\cz(\hbar,s,\vec{t})$, related to the volumes via \eqref{Zvol}, is obtained from the partition function $\bar{\cz}(\hbar,s,\vec{t})$ via the translation \eqref{transl}.  A further relation of $\bar{\cz}(\hbar,s,\vec{t})$ to the partition function $\cz^\bk(\hbar,s,\vec{t})$ is used to prove \eqref{virconj} up to $O(s^4)$, hence also Theorem~\ref{conj}.  In the following, we write $\Omega_{g,n+2}(\sigma):=\Omega_{g,n+2}(e_\sigma)$, i.e. we replace $e_\sigma=e_1^{\otimes n}\otimes e_0^{\otimes m}$ with $\sigma=(1^n,0^m)$, to ease notation in formulae.

\begin{proposition}   \label{OmK}
\[
\int_{\overline{\modm}_{g,n+2}}\hspace{-5mm}\Omega_{g,n+2}(1^n,0^2)\prod_{j=1}^n\psi_j^{k_j}=2\int_{\overline{\modm}_{g,n}}\hspace{-5mm}\bk_{2g-3+n}\prod_{j=1}^n\psi_j^{k_j}+\sum_{j=1}^n\int_{\overline{\modm}_{g,n}}\hspace{-5mm}\bk_{2g-2+n}\psi_j^{k_j-1}\prod_{i\neq j}^n\psi_i^{k_i}.
\]
\end{proposition}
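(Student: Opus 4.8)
The plan is to compute the left-hand side by pushing forward along the forgetful map $\phi:=\pi^{(2)}:\overline{\modm}_{g,n+2}\to\overline{\modm}_{g,n}$ that forgets the two Ramond points $p_{n+1},p_{n+2}$, while tracking the discrepancy between the $\psi$-classes on the source and their pullbacks from the target. Iterating the standard comparison $\psi_j=\pi^*\psi_j+D_{j,\bullet}$ for the two forgetful maps gives, for each $j\le n$,
\[\psi_j=\phi^*\psi_j+D_{j,n+1}+D_{j,n+2}+D_{j,n+1,n+2}\]
on $\overline{\modm}_{g,n+2}$, where the last three terms are the boundary divisors on which $p_j$ lies on a rational bubble together with $p_{n+1}$, with $p_{n+2}$, or with both. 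First I would observe that $\Omega_{g,n+2}(1^n,0^2)$ annihilates $D_{j,n+1}$ and $D_{j,n+2}$: restricting via the splitting axiom \eqref{glue} with $\Delta=e_1\otimes e_1$ puts $\Omega_{0,3}(1,0,1)$ on the three-pointed bubble, and this class vanishes because a single Ramond point forces the half-integral (hence impossible) rank $2g-2+\tfrac12(n+|\sigma|)$. Thus only $B_j:=D_{j,n+1,n+2}$ survives, and since the two Ramond points cannot simultaneously bubble off with two distinct marked points, the $B_j$ are pairwise disjoint, so $B_jB_{j'}=0$ for $j\neq j'$.

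Expanding $\prod_{j}\psi_j^{k_j}=\prod_j(\phi^*\psi_j+B_j)^{k_j}$ modulo the $\Omega$-annihilated divisors and modulo products of two distinct $B$'s then splits the integral into a leading term $\int_{\overline{\modm}_{g,n+2}}\Omega_{g,n+2}(1^n,0^2)\prod_j(\phi^*\psi_j)^{k_j}$ and a sum of single-$B_j$ corrections. For the leading term the projection formula together with Theorem~\ref{pushK} applied with $m=2$ gives $\phi_*\Omega_{g,n+2}(1^n,0^2)=2\,\bk_{2g-3+n}$, producing $2\int_{\overline{\modm}_{g,n}}\bk_{2g-3+n}\prod_j\psi_j^{k_j}$, which is the first term of the claimed identity.

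The substance is the correction attached to each $B_j$. Restricting to $B_j\cong\overline{\modm}_{0,4}\times\overline{\modm}_{g,n}$ via the gluing map, \eqref{glue} factorises $\Omega_{g,n+2}(1^n,0^2)$ as $\Omega_{0,4}(1,0,0,1)\otimes\Omega_{g,n}(1^n)$, and Theorem~\ref{pushK} with $m=0$ identifies the main-component factor $\Omega_{g,n}(1^n)=\bk_{2g-2+n}$. Here $\phi^*\psi_j$ restricts to the $\psi$-class at the node on the $\overline{\modm}_{g,n}$ factor (namely $\psi_j$ after contraction), while the self-intersection contributes $B_j|_{B_j}=-\psi'-\psi''$ with $\psi'$ the node $\psi$-class on the bubble. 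The key simplification is dimensional: $\overline{\modm}_{0,4}$ is one-dimensional and its cohomology is already saturated by the degree-one class $\Omega_{0,4}(1,0,0,1)$, so every surviving factor of $\psi'$ kills the contribution and only the main-component class $\psi''=\psi_j$ persists from each power of $B_j|_{B_j}$. The $l$-th term of $(\phi^*\psi_j+B_j)^{k_j}$ therefore reduces to $\binom{k_j}{l}(-1)^{l-1}c\int_{\overline{\modm}_{g,n}}\bk_{2g-2+n}\psi_j^{k_j-1}\prod_{i\neq j}\psi_i^{k_i}$, with $c=\int_{\overline{\modm}_{0,4}}\Omega_{0,4}(1,0,0,1)$, and the binomial collapse $\sum_{l\ge1}\binom{k_j}{l}(-1)^{l-1}=1$ reduces the whole correction to $c\int_{\overline{\modm}_{g,n}}\bk_{2g-2+n}\psi_j^{k_j-1}\prod_{i\neq j}\psi_i^{k_i}$.

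It remains to evaluate $c=1$, which I would obtain from \eqref{forgbund1}: forgetting one of the two Neveu--Schwarz points gives $\Omega_{0,4}(1,0,0,1)=\psi\cdot\pi^*\Omega_{0,3}(0,0,1)$, and $\Omega_{0,3}(0,0,1)=1$ by the symmetry of $\Omega$ and Lemma~\ref{initial}, so $\int_{\overline{\modm}_{0,4}}\Omega_{0,4}(1,0,0,1)=\int_{\overline{\modm}_{0,4}}\psi=1$. Summing the corrections over $j$ and adding the leading term yields the stated formula, with the convention that the $j$-th correction is absent when $k_j=0$. I expect the main obstacle to be precisely the correction computation of the previous paragraph: one must control the higher powers $\psi_j^{k_j}$ through the self-intersection of $B_j$, and what makes it work is the dimensional saturation of the bubble together with the binomial collapse, which conspire to replace $\psi_j^{k_j}$ by a single $\psi_j^{k_j-1}$ with coefficient exactly one.
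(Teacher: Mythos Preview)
Your proposal is correct and follows essentially the same approach as the paper: push forward along $\pi^{(2)}$, use the comparison $\psi_j=\pi^*\psi_j+D_{j,n+1}+D_{j,n+2}+D_{j,n+1,n+2}$, kill the two three-point boundary divisors against $\Omega$, and evaluate the single surviving correction on $D_{j,n+1,n+2}$ via the splitting axiom together with $\int_{\overline{\modm}_{0,4}}\Omega_{0,4}(1^2,0^2)=1$. The only difference is cosmetic: the paper bypasses your binomial expansion and self-intersection computation by observing directly that $\Omega\cdot\psi_j\cdot D_{j,n+1,n+2}=0$ (exactly your ``dimensional saturation'' of the bubble), which immediately yields $\Omega\cdot\psi_j^{k_j}=\Omega\cdot(\pi^*\psi_j^{k_j}+D_{j,n+1,n+2}\,\pi^*\psi_j^{k_j-1})$ without needing the collapse $\sum_{l\ge1}\binom{k_j}{l}(-1)^{l-1}=1$.
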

\begin{proof}
Write $\pi=\pi^{(2)}:\overline{\modm}_{g,n+2}\to\overline{\modm}_{g,n}$, where we drop the superscript for ease of notation.  For $i\leq n$,
\[\psi_i=\pi^*\psi_i+D_{i,n+1}+D_{i,n+2}+D_{i,n+1,n+2}.
\]
We will use the following vanishing results:
\[\psi_i\cdot D_{i,j}=0,\quad D_{i,n+1,n+2}\cdot D_{j,n+1,n+2}=0,\quad i\neq j\]
and
\[\Omega_{g,n+2}(1^n,0^2)\cdot\psi_i\cdot D_{i,n+1,n+2}=0
\]
since the restriction to $D_{i,n+1,n+2}$ is given by $\Omega_{0,4}(1^2,0^2)\cdot\psi_i$  which vanishes due to its degree being too large.  Also
\[
\Omega_{g,n+2}(1^n,0^2)\cdot D_{i,n+1}=0 
\]
since  its intersection has the wrong degree:
\[\deg\Omega_{g,n+1}(1^{n-1},0^2)\otimes\Omega_{0,3}(1,0^2)=2g-2+n<2g-2+n+1=\deg\Omega_{g,n+2}(1^n,0^2).\]   
Thus
\begin{align*}
\Omega_{g,n+2}(1^n,0^2)\cdot\psi_j^{k_j}&=\Omega_{g,n+2}(1^n,0^2)\cdot\psi_j\pi^*\psi_j^{k_j-1}\\
&=\Omega_{g,n+2}(1^n,0^2)\cdot(\pi^*\psi_j^{k_j}+D_{j,n+1,n+2}\pi^*\psi_j^{k_j-1})
\end{align*}
and
\begin{align*}
\Omega_{g,n+2}&(1^n,0^2)\cdot\prod_{j=1}^n\psi_j^{k_j}\\
&=\Omega_{g,n+2}(1^n,0^2)\cdot\left(\pi^*\prod_{j=1}^n\psi_j^{k_j}+\sum_{j=1}^nD_{j,n+1,n+2}\pi^*\big\{\psi_j^{k_j-1}\prod_{i\neq j}\psi_i^{k_i}\big\}
\right).
\end{align*}
Therefore
\begin{align*}
\int_{\overline{\modm}_{g,n+2}}\hspace{-3mm}\Omega_{g,n+2}&(1^n,0^2)\prod_{j=1}^n\psi_j^{k_j}\\
&=\int_{\overline{\modm}_{g,n}}\prod_{j=1}^n\psi_j^{k_j}\pi_*\Omega_{g,n+2}(1^n,0^2)+\sum_{j=1}^n\int_{\overline{\modm}_{g,n}}\hspace{-5mm}\bk_{2g-2+n}\psi_j^{k_j-1}\prod_{i\neq j}^n\psi_i^{k_i}\\
&=2\int_{\overline{\modm}_{g,n}}\hspace{-3mm}\bk_{2g-3+n}\prod_{j=1}^n\psi_j^{k_j}+\sum_{j=1}^n\int_{\overline{\modm}_{g,n}}\hspace{-5mm}\bk_{2g-2+n}\psi_j^{k_j-1}\prod_{i\neq j}^n\psi_i^{k_i}
\end{align*}
where the first equality uses 
\[\Omega_{g,n+2}(1^n,0^2)\cdot D_{j,n+1,n+2}=\Omega_{g,n}(1^n)\otimes\Omega_{0,4}(1^2,0^2)\]
together with $\Omega_{g,n}(1^n)=\bk_{2g-2+n}$ and $\int_{\overline{\modm}_{0,4}}\Omega_{0,4}(1^2,0^2)=1$.  The second equality uses $\pi_*\Omega_{g,n+2}(1^n,0^2)=2\bk_{2g-3+n}$.
\end{proof}

\begin{cor}
\[\bar{\cz}(\hbar,s,t_0,t_1,...)=\exp\left\{\tfrac12s^2(\cl_{-1}+t_0)\right\}\cdot\cz^\bk(\hbar,s,t_0,t_1,...)+O(s^4)\]
\end{cor}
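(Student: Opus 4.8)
\medskip
The plan is to take logarithms and compare the two sides coefficient-by-coefficient in $s$. Since the spin parity condition kills all odd powers of $s$ and the asserted equality is only claimed modulo $O(s^4)$, only the coefficients of $s^0$ and $s^2$ are relevant. Write $\bar{\cf}=\log\bar{\cz}=\sum_{m}s^m\bar{\cf}_m$ and $\cf^\bk=\log\cz^\bk=\sum_m s^m\cf^\bk_m$ for the $s$-expansions, and set $\Phi:=\bar{\cf}_0$. First I would check that the $s^0$-coefficients already agree, $\bar{\cf}_0=\cf^\bk_0=\Phi$: both are the generating function of $\int_{\overline{\modm}_{g,n}}\bk_{2g-2+n}\prod_i\psi_i^{k_i}$, because $\Omega_{g,n}(1^n)=\bk_{2g-2+n}$ is the $m=0$ instance of Theorem~\ref{pushK}, and $\bk_{2g-2+n}$ is exactly the $s^0$-part of $\bk_{g,n}(s)$.

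Next I would reduce the statement to a single identity among generating functions. The double-derivative part of $\cl_{-1}$ is empty, since $i+j=-2$ has no solution with $i,j\geq0$, so by \eqref{virop} $\cl_{-1}+t_0=\sum_{k\geq0}t_{k+1}\frac{\partial}{\partial t_k}+\frac{t_0^2}{2\hbar}+t_0$ is a first-order operator plus multiplication. Expanding $\exp\{\tfrac12 s^2(\cl_{-1}+t_0)\}=1+\tfrac12 s^2(\cl_{-1}+t_0)+O(s^4)$ and applying it to $\cz^\bk=e^\Phi(1+s^2\cf^\bk_2)+O(s^4)$, the $s^2$-coefficient of the right-hand side is $\cf^\bk_2\,e^\Phi+\tfrac12(\cl_{-1}+t_0)e^\Phi$. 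Dividing by $e^\Phi$ and using $e^{-\Phi}(\cl_{-1}+t_0)e^\Phi=\sum_k t_{k+1}\frac{\partial\Phi}{\partial t_k}+\frac{t_0^2}{2\hbar}+t_0$, the corollary becomes equivalent to
\[
\bar{\cf}_2=\cf^\bk_2+\tfrac12\sum_{k\geq0}t_{k+1}\frac{\partial\Phi}{\partial t_k}+\frac{t_0^2}{4\hbar}+\frac{t_0}{2}.
\]

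The main computational input is Proposition~\ref{OmK}. For stable $(g,n)$ it expresses $\langle\overline{\tau}_{k_1}\cdots\overline{\tau}_{k_n}\nu^2\rangle_g$ as $2\int_{\overline{\modm}_{g,n}}\bk_{2g-3+n}\prod_i\psi_i^{k_i}$ plus $\sum_{j}\int_{\overline{\modm}_{g,n}}\bk_{2g-2+n}\psi_j^{k_j-1}\prod_{i\neq j}\psi_i^{k_i}$. After the $1/2!$ weighting of the $s^2$-coefficient, the first term reproduces $\cf^\bk_2$, whose $s^2$-part indeed selects $\bk_{2g-3+n}$. Using the $S_n$-symmetry of the intersection numbers to fix the distinguished point, together with the substitution $k_j=k+1$, the second term reproduces $\tfrac12\sum_k t_{k+1}\partial\Phi/\partial t_k$, since $\partial\Phi/\partial t_k$ inserts a $\psi^k$-point and the prefactor $t_{k+1}$ shifts the exponent. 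Thus the stable contributions to both sides of the displayed identity match term by term.

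The remaining terms $\tfrac{t_0^2}{4\hbar}+\tfrac{t_0}{2}$ must come precisely from the genus-zero configurations where $\overline{\modm}_{g,n}$ is unstable and Proposition~\ref{OmK} does not apply: the contribution of $\langle\overline{\tau}_0\nu^2\rangle_0$ carried by $\overline{\modm}_{0,3}$ and of $\langle\overline{\tau}_0^2\nu^2\rangle_0$ carried by $\overline{\modm}_{0,4}$, each equal to $1$ by Lemma~\ref{initial}. These are exactly the images of the zero-mode part $\tfrac12\big(\tfrac{t_0^2}{2\hbar}+t_0\big)$ of $\tfrac12(\cl_{-1}+t_0)$ acting on the constant term. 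I expect this last matching to be the main obstacle: one must isolate the finitely many unstable base cases, keep track of their factorial and $\epsilon_{g,\sigma}$ weights, and verify that they reproduce the quadratic and linear $t_0$-terms --- with the correct powers of $\hbar$ --- generated by the potential $\tfrac{t_0^2}{2\hbar}$ and the shift $t_0$ in $\cl_{-1}+t_0$. This is the only step that is not a formal consequence of Proposition~\ref{OmK}, and it is where the precise normalisations in \eqref{virop} must be reconciled with the low-genus intersection numbers.
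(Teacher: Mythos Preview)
Your proposal is correct and follows essentially the same route as the paper's proof: both take logarithms, match the $s^0$ coefficients via $\Omega_{g,n}(1^n)=\bk_{2g-2+n}$, invoke Proposition~\ref{OmK} (divided by $2!$) to identify the stable part of the $s^2$ coefficient with $\cf^\bk_2+\tfrac12\sum_k t_{k+1}\partial_{t_k}\cf^\bk_0$, and then account separately for the two unstable genus-zero contributions $\langle\overline{\tau}_0\nu^2\rangle_0$ and $\langle\overline{\tau}_0^2\nu^2\rangle_0$ that produce the $t_0$ and $t_0^2$ terms. Your derivation of $e^{-\Phi}(\cl_{-1}+t_0)e^\Phi$ is in fact a bit more explicit than the paper's, and your caution about the $\hbar$-power in the linear $t_0$ term is well placed (the paper is loose with this factor in its displayed identity).
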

\begin{proof}
We can rewrite $1/2!$ times the intersection formulae in Proposition~\ref{OmK} plus
\[
\int_{\overline{\modm}_{g,n}}\hspace{-5mm}\Omega_{g,n}(1^n)\prod_{j=1}^n\psi_j^{k_j}=\int_{\overline{\modm}_{g,n}}\hspace{-5mm}\bk_{2g-2+n}\prod_{j=1}^n\psi_j^{k_j}
\]
in terms of the partition functions:
\begin{align*}
\bar{\cf}(\hbar,s,t_0,t_1,...)&=\cf^\bk(\hbar,s,t_0,t_1,...)+\frac12s^2\sum_{k=0}^\infty t_{k+1}\frac{\partial}{\partial t_k}\cf^\bk(\hbar,s,t_0,t_1,...)\\
&\qquad+\frac12s^2t_0+\frac14s^2t_0^2+O(s^4)
\end{align*}
which is equivalent to the statement of the corollary.
The unstable terms $\frac12s^2t_0$ and $\frac14s^2t_0^2$ do not arise via the relation with intersections of the class $\bk$.  The term $\frac14s^2t_0^2$ is part of $\tfrac12s^2\cl_{-1}$ so we only need to add the extra term $t_0$ to the operator.
\end{proof}
\begin{remark}
The strategy of the proof of Proposition~\ref{OmK} generalises to show that intersection numbers of $\psi$ classes with $\Omega_{g,n+m}(1^n,0^m)$ can be expressed in terms of intersection numbers of $\psi$ classes with $\bk_m$.  In terms of partial differential operators, the relation may be of the form:
\[\bar{\cz}(\hbar,s,t_0,t_1,...)=\exp\left\{\frac{s^2}{2}t_0+\sum_m\frac{s^m}{m!}c_m\cl_{-m}\right\}\cdot\cz^\bk(\hbar,s,t_0,t_1,...)\]
for coefficients $c_m\in\bq$ to be determined.  The appearance of $\cl_{-m}$ is a guess based on the pullback formula for $\psi$ with respect to the forgetful map $\pi^{(2m)}$.  
\end{remark}
\begin{proposition}  \label{virzbar}
\[\bigl((2m+1)!!\frac{\partial}{\partial t_m}-\cl_m-\tfrac{s^2}{2\hbar}\delta_{m,0}\bigr)\bar{\cz}(\hbar,s,\vec{t})=O(s^4).
\]
\end{proposition}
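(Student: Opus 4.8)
The plan is to deduce the approximate Virasoro constraints for $\bar{\cz}$ from the \emph{exact} ones \eqref{virK} satisfied by $\cz^\bk$, using the corollary above, which gives $\bar{\cz}=T\cz^\bk+O(s^4)$ with $T=\exp\{\tfrac12 s^2(\cl_{-1}+t_0)\}$. Write $\cv_m:=(2m+1)!!\frac{\partial}{\partial t_m}-\cl_m$, so that the operator in the proposition is $\cv_m-\tfrac{s^2}{2\hbar}\delta_{m,0}$ and \eqref{virK} reads $\cv_m\cz^\bk=s^2\cl_{m-1}\cz^\bk$; its $s=0$ specialisation gives the constraints $\cv_m Z_0=0$ of the BGW tau function $Z_0:=\cz^\bk|_{s=0}$. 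Since $\cv_m$ and $\cl_m$ carry no $s$ and $T=1+\tfrac12 s^2(\cl_{-1}+t_0)+O(s^4)$, applying $\cv_m-\tfrac{s^2}{2\hbar}\delta_{m,0}$ to $\bar{\cz}$ leaves only an order-$s^2$ remainder, and the entire content of the proposition is that this remainder vanishes.

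First I would conjugate. Writing $\cv_m T=T\,(T^{-1}\cv_m T)$ and expanding to first order in $\log T=\tfrac12 s^2(\cl_{-1}+t_0)$ gives $T^{-1}\cv_m T=\cv_m+\tfrac12 s^2[\cv_m,\cl_{-1}+t_0]+O(s^4)$. Substituting $\cv_m\cz^\bk=s^2\cl_{m-1}\cz^\bk$ and then replacing $T\mapsto 1$ and $\cz^\bk\mapsto Z_0$ wherever an overall factor $s^2$ is already present, I obtain
\[
\Bigl(\cv_m-\tfrac{s^2}{2\hbar}\delta_{m,0}\Bigr)\bar{\cz}=s^2\,\Theta_m Z_0+O(s^4),\qquad \Theta_m:=\cl_{m-1}+\tfrac12[\cv_m,\cl_{-1}+t_0]-\tfrac{1}{2\hbar}\delta_{m,0}.
\]
Everything then reduces to the single operator identity $\Theta_m Z_0=0$.

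The heart of the matter is a purely algebraic commutator computation. Using the Virasoro bracket $[\cl_m,\cl_{-1}]=2(m+1)\cl_{m-1}$ together with the explicit form $\cl_{-1}=\sum_{k\ge 0}t_{k+1}\frac{\partial}{\partial t_k}+\tfrac{1}{2\hbar}t_0^2$ and the derivation rules for $[\frac{\partial}{\partial t_m},\cl_{-1}]$ and $[\cl_m,t_0]$, I expect $\Theta_m$ to collapse onto the \emph{lower} constraint operator, namely $\Theta_m=m\,\cv_{m-1}$ for $m\ge 1$ and $\Theta_0=0$ identically; both annihilate $Z_0$ by $\cv_{m-1}Z_0=0$, finishing the proof. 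The main obstacle is the precise bookkeeping of the genus ($\hbar$) weights in these brackets: the genus-raising $\hbar\,\partial^2$ term of $\cl_m$ contributes a factor $\hbar$ to $[\cl_m,t_0]$, while the unstable shift of the corollary must be counted at genus zero (i.e. as $\hbar^{-1}t_0$, consistent with $\widehat{V}_{0,1}^{(2)}=1$), and only with these weights does the term $(2m+1)!![\frac{\partial}{\partial t_m},\cl_{-1}]$ cancel $[\cl_m,t_0]$ to leave exactly $m\,\cv_{m-1}$. The delicate endpoint is $m=0$, where the string-type terms must conspire with the correction $\tfrac{s^2}{2\hbar}\delta_{m,0}$ to give $\Theta_0=0$; this case should be checked separately from the generic $m\ge 1$ identity.
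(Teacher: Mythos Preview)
Your plan is correct and follows essentially the same route as the paper: expand $\bar{\cz}=T\cz^{\bk}+O(s^4)$, apply $\cv_m$, compute the commutator $[\cv_m,\cl_{-1}+t_0]$, and use the exact constraints \eqref{virK} for $\cz^{\bk}$ to obtain cancellation at order $s^2$. Your packaging via $\Theta_m=m\,\cv_{m-1}$ (and $\Theta_0=0$) is a slightly cleaner way to phrase the same computation the paper does directly, and your observation about the $\hbar^{-1}$ weight on the unstable $t_0$ shift is exactly the bookkeeping needed to make the commutator come out as stated.
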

\begin{proof}

Up to $O(s^4)$,
\begin{align*}
\bigl((2m+1)!!\frac{\partial}{\partial t_m}-&\cl_m\bigr)\bar{\cz}=\bigl((2m+1)!!\frac{\partial}{\partial t_m}-\cl_m\bigr)(\cz^\bk+\left\{\tfrac12s^2(\cl_{-1}+t_0)\right\}\cdot \cz^\bk)\\
&=s^2\left\{\cl_{m-1}\cz^\bk+((2m+1)!!\frac{\partial}{\partial t_m}-\cl_m\big)\tfrac12s^2(\cl_{-1}+t_0)\right\}\cdot \cz^\bk\\
\end{align*}
which uses \eqref{virK} to remove the $s^0$ terms.  To calculate the right hand side we need the commutator for $m>0$
\[
\big[(2m+1)!!\frac{\partial}{\partial t_m}-\cl_m,\cl_{-1}+t_0\big]=(2m+1)!!\frac{\partial}{\partial t_{m-1}}-2(m+1)\cl_{m-1}-(2m-1)!!\frac{\partial}{\partial t_{m-1}}.
\]
Thus
\begin{align*}
((2m+1)!!\frac{\partial}{\partial t_m}-\cl_m\big)&(\cl_{-1}+t_0)\cdot \cz^\bk=[((2m+1)!!\frac{\partial}{\partial t_m}-\cl_m\big),\cl_{-1}+t_0]\cdot \cz^\bk\\
&\hspace{2.5cm}+(\cl_{-1}+t_0)\cdot ((2m+1)!!\frac{\partial}{\partial t_m}-\cl_m\big)\cz^\bk\\
&=\Big(2m(2m-1)!!\frac{\partial}{\partial t_{m-1}}-2(m+1)\cl_{m-1}\Big)\cdot \cz^\bk+O(s^2)\\
&=-2\cl_{m-1}\cdot \cz^\bk+O(s^2)
\end{align*}
where both the first and second equalities use \eqref{virK}. Thus we conclude that
\[\bigl((2m+1)!!\frac{\partial}{\partial t_m}-\cl_m\bigr)\bar{\cz}=s^2(\cl_{m-1}\cz^\bk-\cl_{m-1}\cz^\bk)+O(s^4)=O(s^4).\]
\end{proof}
Theorem~\ref{conj} is an immediate consequence of Propositions~\ref{vireq} and \ref{virzbar}.  We see that the proof relies on the intersection formula in Proposition~\ref{OmK} and  the non-trivial algebraic structure given by translation in Theorem~\ref{th:transl} and the Virasoro algebra.  The proof does not give an explanation for why it works.  The following discussion attempts to rectify this via a heuristic differential geometric argument.

%\begin{remark}  
\subsection{Supergeometry} \label{supergeom}
Mirzakhani \cite{MirSim} proved a recursion similar to \eqref{reconj} via an argument which uses the moduli space of hyperbolic surfaces with geodesic boundary.  On any given hyperbolic surface with a distinguished boundary component $\beta$ she produced a one-to-one correspondence between a collection of disjoint intervals in $\beta$ and embedded pairs of pants in the hyperbolic surface that meet %with one boundary component 
$\beta$.  The length of each of the intervals is determined by its corresponding pair of pants.  Mirzakhani analysed an arbitrary hyperbolic pair of pants to produce an expression for the length of such an interval as a function of the lengths $x$, $y$ and $z$ of the three boundary components of the pair of pants, which produces kernels similar to $D(x,y,z)$ and $R(x,y,z)$ defined in the introduction.  The moduli spaces of hyperbolic surfaces with geodesic boundary components of lengths $(L_1,...,L_n)$ have natural symplectic forms, hence also volumes, which correspond to the symplectic deformations $\omega(L_1,...,L_n)$ used in the definition \eqref{nsdef}.  Mirzakhani applied this construction to each point in the moduli space to produce recursive relations between volumes of moduli spaces of hyperbolic surfaces with a given topology and volumes of moduli spaces of hyperbolic surfaces obtained by cutting out a pair of pants from a surface, thus reducing the complexity of the topology of the hyperbolic surface.  The recursive relations involve the kernels and the moduli spaces of simpler complexity as in \eqref{reconj}.

Stanford and Witten \cite{SWiJTG} generalised Mirzakhani's argument to the moduli space of super hyperbolic surfaces with geodesic boundary.  They used this to produce a heuristic proof of \eqref{reconj} in the $s=0$ case, when all boundary components are Neveu-Schwarz.  Via an analysis of super hyperbolic pairs of pants with boundary components of (super) length $x$, $y$ and $z$, they produced the kernels $D(x,y,z)$ and $R(x,y,z)$ defined in the introduction, in the Neveu-Schwarz case. They produced similar kernels when two of the boundary components of the super hyperbolic pair of pants are Ramond.  If we apply their construction to a hyperbolic surface with $n$ marked Neveu-Schwarz geodesic boundary components together with a gas of Ramond punctures we produce a heuristic proof of \eqref{reconj}.  It produces (without proof) a recursion that contains all of \eqref{reconj} plus extra terms with new kernels corresponding to pairs of pants with Ramond boundary components.  The extra terms should give a zero contribution to the recursion, because we expect the volumes
to vanish in the case of Ramond geodesic boundary components.  Stanford and Witten give an argument in \cite{SWiJTG} for the vanishing of these volumes.  Such a proof of the recursion is so far incomplete, nevertheless it is useful to heuristically explain the form of the recursion.  An explanation of the form of the recursion is currently beyond the algebro-geometric methods used in the proofs of Theorems~\ref{main} and \ref{conj}.
%\end{remark}

\subsection{Calculations}
In this section we collect some explicit calculations via intersection theory, and observe the recursion in these cases.  This has been helpful to check formulae for errors.  The reader shlould also find these useful to help to understand the recursion and its proof.

\begin{lemma}   \label{initial}
\[ \widehat{V}^{(2)}_{0,n}=\tfrac{1}{2!}\widehat{V}^{WP}_{0,(1^n,0^2)}=\tfrac12(n-1)!
\]
\end{lemma}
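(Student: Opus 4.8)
The plan is to turn the volume into a single top intersection number over $\overline{\modm}_{0,n+2}$ and evaluate it by forgetting Neveu-Schwarz points one at a time. First I would use \eqref{nsdef} together with the intersection-number expression for $\widehat V^{(m)}_{g,n}$ derived in Section~\ref{spincohft} (equivalently Corollary~\ref{volint}) to reduce the lemma to
\[
\widehat V^{WP}_{0,(1^n,0^2)}=\int_{\overline{\modm}_{0,n+2}}\Omega_{0,n+2}(e_1^{\otimes n}\otimes e_0^{\otimes 2})\exp\{2\pi^2\kappa_1\},
\]
the asserted first equality $\widehat V^{(2)}_{0,n}=\tfrac1{2!}\widehat V^{WP}_{0,(1^n,0^2)}$ being a matter of the $s^2$-coefficient normalization in \eqref{supvolpol}. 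The key simplification is that $\Omega_{0,n+2}(e_1^{\otimes n}\otimes e_0^{\otimes 2})$ already has complex degree $\text{rank}\,\bar E_{0,(1^n,0^2)}=2\cdot0-2+\tfrac12((n+2)+n)=n-1$ by \eqref{rank}, which equals $\dim_{\bc}\overline{\modm}_{0,n+2}$. Thus the class is of top degree, only the degree-zero part of $\exp\{2\pi^2\kappa_1\}$ survives, and writing $I_n:=\int_{\overline{\modm}_{0,n+2}}\Omega_{0,n+2}(e_1^{\otimes n}\otimes e_0^{\otimes 2})$ it remains to show $I_n=(n-1)!$.

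Next I would set up a recursion in $n$. For $n\geq2$, the pullback relation \eqref{forgbund1} together with the $S_n$-symmetry of $\Omega$ gives
\[
\Omega_{0,n+2}(e_1^{\otimes n}\otimes e_0^{\otimes 2})=\psi\cdot\pi^*\Omega_{0,n+1}(e_1^{\otimes(n-1)}\otimes e_0^{\otimes 2}),
\]
where $\pi:\overline{\modm}_{0,n+2}\to\overline{\modm}_{0,n+1}$ forgets one Neveu-Schwarz point and $\psi$ is its cotangent class. The projection formula and the standard pushforward $\pi_*\psi=\kappa_0=2\cdot0-2+(n+1)=n-1$ on $\overline{\modm}_{0,n+1}$ then yield
\[
I_n=(n-1)\int_{\overline{\modm}_{0,n+1}}\Omega_{0,n+1}(e_1^{\otimes(n-1)}\otimes e_0^{\otimes 2})=(n-1)\,I_{n-1}.
\]

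It then remains to compute the base case $I_1=\int_{\overline{\modm}_{0,3}}\Omega_{0,3}(e_1\otimes e_0^{\otimes 2})$. Since $\bar E_{0,(1,0,0)}$ has rank $2\cdot0-2+\tfrac12(3+1)=0$, its top Chern class $c_{\text{top}}(\bar E^\vee_{0,(1,0,0)})$ coincides with the full Chiodo class $c(\bar E_{0,(1,0,0)})$, so by \eqref{Omegadef} we have $I_1=\epsilon_{0,(1,0,0)}\,p_*c(\bar E_{0,(1,0,0)})=\epsilon_{0,(1,0,0)}\,\Omega^{\text{spin}}_{0,3}(e_1\otimes e_0\otimes e_0)$. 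Using the unit axiom \eqref{unmet} for the Chiodo CohFT, whose unit is $e_1$, this equals $\epsilon_{0,(1,0,0)}\,\eta(e_0,e_0)=2\cdot\tfrac12=1$, where $\epsilon_{0,(1,0,0)}=2^{0-1+\frac12(3+1)}=2$. Geometrically the factor $\tfrac12=\eta(e_0,e_0)$ reflects the generic order-two spin automorphism, i.e.\ the $\bz_2$-gerbe structure of $\overline{\modm}^{\text{spin}}_{0,(1,0,0)}$ over the point $\overline{\modm}_{0,3}$. Feeding $I_1=1$ into the recursion gives $I_n=(n-1)!$, hence $\tfrac1{2!}\widehat V^{WP}_{0,(1^n,0^2)}=\tfrac12(n-1)!$.

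The main obstacle I anticipate is the base case: the recursion is routine given \eqref{forgbund1} and $\pi_*\psi=\kappa_0$, whereas $I_1=1$ rests on the normalization $\epsilon_{0,\sigma}$ exactly cancelling the factor $\tfrac12$ coming from the $\bz_2$ spin automorphism. Making this precise requires identifying $e_1$ as the unit of the Chiodo CohFT and, equivalently, verifying $p_*(1)=\tfrac12$ by a direct analysis of the degree of $p$ over $\overline{\modm}_{0,3}$.
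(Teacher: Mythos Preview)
Your proposal is correct and follows essentially the same approach as the paper's proof: both reduce to the integral $I_n=\int_{\overline{\modm}_{0,n+2}}\Omega_{0,n+2}(e_1^{\otimes n}\otimes e_0^{\otimes 2})$, establish the recursion $I_n=(n-1)I_{n-1}$ via \eqref{forgbund1} and $\pi_*\psi=\kappa_0=n-1$, and compute the base case $I_1=1$ from $\epsilon_{0,(1,0,0)}=2$ and $p_*1=\tfrac12$. Your explicit degree count showing that only the constant term of $\exp\{2\pi^2\kappa_1\}$ survives is a helpful clarification the paper leaves implicit, and your base-case route through the unit axiom \eqref{unmet} with $\un=e_1$ is equivalent to the paper's direct computation $\Omega^{\text{spin}}_{0,3}(1,0^2)=p_*c_0=p_*1=\tfrac12$; as you correctly note, both hinge on the $\bz_2$ spin automorphism giving $p_*(1)=\tfrac12$.
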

\begin{proof}
When $n=1$, the initial value $\widehat{V}^{WP}_{0,(1,0^2)}=1$ is calculated via the pushforward $\Omega^{\text{spin}}_{0,3}(1,0^2)=p_*c_0=p_*1=\frac12$ and \eqref{Omegadef} to get
\[\Omega_{0,3}(1,0^2)=2^{-1+2}\Omega^{\text{spin}}_{0,3}(1,0^2)=2\cdot\tfrac12=1
\] 
hence $\widehat{V}^{(2)}_{0,n}=\tfrac{1}{2!}\widehat{V}^{WP}_{0,(1,0^2)}=\tfrac{1}{2!}\int_{\overline{\modm}_{0,3}}\Omega_{0,3}(1,0^2)=\tfrac{1}{2}$.

For $n>1$ we have
\begin{align*} 
\widehat{V}^{WP}_{0,(1^n,0^2)}&=\int_{\overline{\modm}_{0,n+2}}\Omega_{0,n+2}(1^n,0^2)=\int_{\overline{\modm}_{0,n+2}}\psi_{n}\pi_n^*\Omega_{0,n+1}(1^{n-1},0^2)\\
&=(n-1)\int_{\overline{\modm}_{0,n+1}}\Omega_{0,n+1}(1^{n-1},0^2)
\end{align*}
where $\pi_n$ forgets the $n$th point, the second equality
uses the pullback formula \eqref{forgbund1}, and the third equality uses the pushforward formula $(\pi_n)_*\psi_n=2g-2+n+1=n-1$.

The lemma follows by induction.
\end{proof}
\begin{lemma}
\[ \widehat{V}_{0,n}^{(4)}(L_1,...,L_n)=\tfrac{1}{4!}(n+2)!\pi^2+\tfrac{1}{4!\cdot4}(n+1)!\sum_{i=1}^n L_i^2
\]
\end{lemma}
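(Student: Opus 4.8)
The plan is to mirror the proof of Lemma~\ref{initial}, reducing everything to an intersection number over $\overline{\modm}_{0,n+4}$ and then peeling off a single Neveu--Schwarz point via the pullback relation \eqref{forgbund1}. Proceeding as in Lemma~\ref{initial}, I would start from
\[
\widehat{V}_{0,n}^{(4)}(L_1,\dots,L_n)=\frac{1}{4!}\int_{\overline{\modm}_{0,n+4}}\Omega_{0,n+4}(1^n,0^4)\exp\Bigl\{2\pi^2\kappa_1+\tfrac12\sum_{i=1}^n L_i^2\psi_i\Bigr\}.
\]
The degree count is the key simplification: $\Omega_{0,n+4}(1^n,0^4)$ lies in $H^{2n}$ while $\dim\overline{\modm}_{0,n+4}=n+1$, so only the linear term of the exponential survives. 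This gives $\widehat{V}_{0,n}^{(4)}=\tfrac{1}{4!}\bigl(2\pi^2 A(n)+\tfrac12(\sum_{i=1}^n L_i^2)\,B(n)\bigr)$, where $A(n):=\int_{\overline{\modm}_{0,n+4}}\Omega_{0,n+4}(1^n,0^4)\kappa_1$ and $B(n):=\int_{\overline{\modm}_{0,n+4}}\Omega_{0,n+4}(1^n,0^4)\psi_1$, the latter being independent of $i$ by the $S_n$-symmetry of $\Omega$.

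Second, I would evaluate both integrals by forgetting one Neveu--Schwarz point. Using \eqref{forgbund1} to write $\Omega_{0,n+4}(1^n,0^4)=\psi_1\,\pi^*\Omega_{0,n+3}(1^{n-1},0^4)$, where $\pi$ forgets the first point, the integral $B(n)$ becomes $\int\psi_1^2\,\pi^*\Omega_{0,n+3}(1^{n-1},0^4)$, and the pushforward $\pi_*\psi_1^2=\kappa_1$ gives $B(n)=A(n-1)$. For $A(n)$ I would additionally use the comparison $\kappa_1=\pi^*\kappa_1+\psi_1$, which splits $A(n)$ into two pushforwards governed by $\pi_*\psi_1=\kappa_0=n+1$ (on $\overline{\modm}_{0,n+3}$) and $\pi_*\psi_1^2=\kappa_1$, yielding the recursion
\[
A(n)=(n+1)A(n-1)+A(n-1)=(n+2)\,A(n-1),\qquad n\ge 1.
\]

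Third, I would supply the base case $A(0)=\int_{\overline{\modm}_{0,4}}\Omega_{0,4}(0^4)\,\kappa_1$. Here $\Omega_{0,4}(0^4)$ is a degree-zero class whose value I would compute by restricting to a boundary divisor and applying the gluing relation \eqref{glue} with $\Delta=e_1\otimes e_1$, giving $\Omega_{0,4}(0^4)=\Omega_{0,3}(1,0^2)^2=1$ by Lemma~\ref{initial}. Combined with $\int_{\overline{\modm}_{0,4}}\kappa_1=1$ this yields $A(0)=1$, whence $A(n)=\tfrac{(n+2)!}{2}$ and $B(n)=A(n-1)=\tfrac{(n+1)!}{2}$. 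Substituting into the expression for $\widehat{V}_{0,n}^{(4)}$ gives $\tfrac{1}{4!}\bigl((n+2)!\,\pi^2+\tfrac{(n+1)!}{4}\sum_i L_i^2\bigr)$, which is exactly the stated formula (the $n=0$ case using only the base value $A(0)$).

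The routine parts are the degree bookkeeping and the standard $\kappa/\psi$ pushforwards. The one step requiring care is the base case: the evaluation $\Omega_{0,4}(0^4)=1$ relies on the fact (recorded after \eqref{Omegadef}) that these classes satisfy the gluing relation only through the Neveu--Schwarz channel $\Delta=e_1\otimes e_1$, rather than through the full degenerate metric. I expect this normalization, rather than the recursion itself, to be the main place where an error could creep in.
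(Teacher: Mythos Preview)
Your proof is correct and follows essentially the same route as the paper: the degree count that isolates the linear term of the exponential, the pullback relation \eqref{forgbund1}, and the inductive pushforward identities $\pi_*\psi_1=\kappa_0$, $\pi_*\psi_1^2=\kappa_1$, $\kappa_1=\pi^*\kappa_1+\psi_1$ are exactly what the paper uses. The only cosmetic differences are that you anchor the induction at $n=0$ and obtain $\Omega_{0,4}(0^4)=1$ via the gluing relation with $\Delta=e_1\otimes e_1$, whereas the paper starts at $n=1$ and gets $\Omega_{0,4}(0^4)=1$ directly from $\text{rank}\,E_{0,(0^4)}=0$; your identity $B(n)=A(n-1)$ is marginally slicker than the paper's separate recursion for the $\psi$-coefficient.
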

\begin{proof}
Since $\deg\Omega_{0,n+4}(1^n,0^4)=n$, we have 
\begin{align*}  
\widehat{V}^{WP}_{0,(1^n,0^4)}&=\int_{\overline{\modm}_{0,n+4}}\hspace{-3mm}\Omega_{0,n+4}(1^n,0^4)\exp(2\pi^2\kappa_1+\frac12\sum L_i^2\psi_i)\\
&=2\int_{\overline{\modm}_{0,n+4}}\hspace{-3mm}\Omega_{0,n+4}(1^n,0^4)\cdot\kappa_1\pi^2
+\frac12\int_{\overline{\modm}_{0,n+4}}\hspace{-3mm}\Omega_{0,n+4}(1^n,0^4)\cdot\psi_i\sum L_i^2
\end{align*}
We have rank $E_{0,(0^4)}$=0 and $c(E_{0,(0^4)})=1$ so $\Omega^{\text{spin}}_{0,4}(0^4)=p_*c(E_{0,(0^4)})=\frac12$ where the pushforward introduces a factor of $\frac12$.  

By \eqref{Omegadef}, $\Omega_{0,4}(0^4)=2\Omega^{\text{spin}}_{0,4}(0^4)=1$, and using the forgetful map $\pi$ which forgets the first point $p_1$, 
$\Omega_{0,5}(1,0^4)=\psi_1\pi^*\Omega_{0,4}(0^4)=\psi_1.$ 
Hence 
\[\int_{\overline{\modm}_{0,5}}\hspace{-2mm}\Omega_{0,5}(1,0^4)\cdot\kappa_1=\int_{\overline{\modm}_{0,5}}\hspace{-1mm}\psi_1\cdot\kappa_1=3,\quad
\int_{\overline{\modm}_{0,5}}\hspace{-2mm}\Omega_{0,5}(1,0^4)\cdot\psi_1=\int_{\overline{\modm}_{0,5}}\hspace{-1mm}\psi_1^2=1
\]
and
\[\widehat{V}^{WP}_{0,(1,0^4)}=\frac{1}{4!}\left(6\pi^2+\tfrac12L_1^2\right)\]
which agrees with the $n=1$ case of the stated formula.

For $n>1$, we use the initial case and induction on the coefficients:
\begin{align*} 
\int_{\overline{\modm}_{0,n+4}}\Omega_{0,n+4}(1^n,0^4)\psi_1&=\int_{\overline{\modm}_{0,n+4}}\psi_{n}\pi_n^*\Omega_{0,n+3}(1^{n-1},0^4)\\
&=(n+1)\int_{\overline{\modm}_{0,n+3}}\Omega_{0,n+3}(1^{n-1},0^4)
\end{align*}
where $(\pi_n)_*\psi_n=2g-2+n+3=n+1$.   By induction, this produces the required coefficient $\tfrac{1}{4!\cdot4}(n+1)!$ of $\sum L_i^2$.
The constant term uses the following.
\begin{align*} 
\int_{\overline{\modm}_{0,n+4}}\Omega_{0,n+4}(1^n,0^4)\kappa_1&=\int_{\overline{\modm}_{0,n+4}}\hspace{-3mm}\psi_{n}\pi_n^*\Omega_{0,n+4}(1^{n-1},0^4)(\pi_n^*\kappa_1+\psi_n)\\
=(n+1)&\int_{\overline{\modm}_{0,n+3}}\hspace{-3mm}\Omega_{0,n+3}(1^{n-1},0^4)\kappa_1+\int_{\overline{\modm}_{0,n+3}}\hspace{-3mm}\Omega_{0,n+3}(1^{n-1},0^4)\kappa_1\\
=(n+2)&\int_{\overline{\modm}_{0,n+3}}\hspace{-3mm}\Omega_{0,n+3}(1^{n-1},0^4)\kappa_1\\
=\tfrac{1}{4!}(n+2&)!\pi^2
\end{align*}
where in the second line the first term uses $(\pi_n)_*\psi_n=n+1$ and the second term uses $(\pi_n)_*\psi_n^2=\kappa_1$.  The final equality uses the initial evaluation and induction.
\end{proof}

%\[ \widehat{V}_{0,n}^{(6)}=\tfrac{1}{2^66!}(n+3)!\left(\sum L_i^4+2\sum L_i^2L_j^2+2^4(n+4)\pi^2\sum L_i^2+2^4(n+4)(2n+9)\pi^4\right)\]
\subsubsection{The recursion}
When $m=1$ we verify
\[
L_1\widehat{V}_{0,n}^{(2)}(L_1,L_K)=
\sum_{j=2}^n\int_0^\infty xR(L_1,L_j,x)\widehat{V}_{0,n-1}^{(2)}(x,L_{K\backslash\{j\}})dx
\]
since $\int_0^\infty xR(L_1,L_j,x)dx=L_1$
and $\widehat{V}_{0,n}^{(2)}=\tfrac12(n-1)!$ is constant.  

When $m=2$ we verify
\begin{align*}  
L_1\widehat{V}_{0,n}^{(4)}(L_1,L_K)&=
\sum_{j=2}^n\int_0^\infty xR(L_1,L_j,x)(\tfrac{1}{96}n!(x^2+\sum_{i\neq1,j} L_i^2)+\tfrac{1}{24}(n+1)!\pi^2)dx\\
&+\frac12\int_0^\infty\int_0^\infty xyD(L_1,x,y)dxdy\underbrace{\sum_{I \sqcup J = K}\tfrac14|I|!\tfrac12|J|!}_{
\frac14n!}\\
&=\tfrac{1}{96}n![(n-1)L_1(L_1^2+12\pi^2)+(n+1)L_1L_j^2]\\
&\hspace{2cm}+\tfrac{1}{24}(n+1)!(n-1)\pi^2L_1
+\tfrac18n!(\tfrac16L_1^3+2\pi^2L_1)\\
&=L_1(\tfrac{1}{96}(n+1)!\sum L_i^2+\tfrac{1}{24}(n+2)!\pi^2)
\end{align*}
which uses
\[ \int_0^\infty\int_0^\infty xyD(L_1,x,y)dxdy=\tfrac16L_1^3+2\pi^2L_1
\]
and
\[\int_0^\infty xR(L_1,L_j,x)dx=L_1,\quad\int_0^\infty x^3R(L_1,L_j,x)dx=L_1(L_1^2+3L_j^2+12\pi^2).
\]
When $m=3$, we can use $\widehat{V}_{0,1}^{(6)}(L_1)$ from the introduction, to check:
\begin{align*}  
L_1\widehat{V}_{0,1}^{(6)}(L_1)&=
\frac12\int_0^\infty\int_0^\infty xyD(L_1,x,y)\widehat{V}_{0,1}^{(2)}(x)\widehat{V}_{0,1}^{(4)}(y)dxdy\times 2\\
&=\frac{1}{96}\int_0^\infty\int_0^\infty xyD(L_1,x,y)(x^2+12\pi^2)dxdy\\
&=\tfrac{1}{96}(\tfrac{1}{20}L_1^5+2\pi^2L_1^3+20\pi^4L_1)+\tfrac18(\tfrac16L_1^3+2\pi^2L_1)\pi^2\\
&=L_1\left(\tfrac{1}{1920}L_1^4+\tfrac{1}{24}\pi^2L_1^2+\tfrac{11}{24}\pi^4\right)
\end{align*}
which uses
\[ \int_0^\infty\int_0^\infty x^3yD(L_1,x,y)dxdy=\tfrac{1}{20}L_1^5+2\pi^2L_1^3+20\pi^4L_1\]
and
\[\int_0^\infty x^5R(L_1,L_j,x)dx=L_1(L_1^4+3L_j^4+12\pi^2).
\]

\end{document}